\numberwithin{equation}{section}
\theoremstyle{plain}
\newtheorem*{nonumthmA}{Theorem A}
\newtheorem{thm}{Theorem}[section]
\newtheorem{thm-defn}[thm]{Theorem/Definition}
\newtheorem{lemma}[thm]{Lemma}
\newtheorem{prop}[thm]{Proposition}
\theoremstyle{definition}
\newtheorem{defn}[thm]{Definition}
\theoremstyle{remark}
\newtheorem{rmk}[thm]{Remark}
\newcommand{\A}{{\mathcal A}}
\newcommand{\C}{\mathbb C}
\newcommand{\K}{{\mathcal K}}
\renewcommand{\L}{{\mathcal L}}
\newcommand{\Q}{\mathbb Q}
\newcommand{\R}{\mathbb R}
\newcommand{\Z}{\mathbb Z}
\renewcommand{\O}{\mathcal O}
\newcommand{\oo}{\infty}
\begin{document}

\begin{abstract}
Fin\-tu\-shel and Stern defined the ration\-al blow-down construction \cite{FSrbd} for smooth 4-manifolds, where a linear plumbing configuration of spheres $C_n$ is replaced with a rational homology ball $B_n$, $n \geq 2$. Subsequently, Symington \cite{Sym1} defined this procedure in the symplectic category, where a symplectic $C_n$ (given by symplectic spheres) is replaced by a symplectic copy of $B_n$ to yield a new symplectic manifold. As a result, a symplectic rational blow-down can be performed on a manifold whenever such a configuration of symplectic spheres can be found. In this paper, we define the inverse procedure, the \textit{rational blow-up} in the symplectic category, where we present the symplectic structure of $B_n$ as an entirely standard symplectic neighborhood of a certain Lagrangian 2-cell complex. Consequently, a symplectic rational blow-up can be performed on a manifold whenever such a Lagrangian 2-cell complex is found. 

\end{abstract}

\title[Symplectic Rational Blow-up]{Symplectic Rational Blow-up}
\author{Tatyana Khodorovskiy}

\maketitle

\section{Introduction}
In 1997, Fintushel and Stern \cite{FSrbd} defined the rational blow-down operation for smooth $4$-manifolds, a generalization of the standard blow-down operation. For smooth $4$-manifolds, the standard blow-down is performed by removing a neighborhood of a sphere with self-intersection $(-1)$ and replacing it with a standard $4$-ball $B^4$. The rational blow-down involves replacing a negative definite plumbing $4$-manifold with a rational homology ball. In order to define it, we first begin with a description of the negative definite plumbing $4$-manifold $C_n$, $n \geq 2$, as seen in Figure~\ref{f:cn}, where each dot represents a sphere, $S_i$, in the plumbing configuration. The integers above the dots are the self-intersection numbers of the plumbed spheres: $[S_1]^2 = -(n+2)$ and $[S_i]^2 = -2$ for $2 \leq i \leq n-1$.

\begin{figure}[ht!]
\labellist
\small\hair 2pt
\pinlabel $-(n+2)$ at 30 4.7
\pinlabel $-2$ at 60 4.7
\pinlabel $-2$ at 90 4.7
\pinlabel $-2$ at 173 4.7
\pinlabel $-2$ at 203 4.7
\pinlabel $S_1$ at 32 2.5 
\pinlabel $S_2$ at 62 2.5 
\pinlabel $S_3$ at 92 2.5 
\pinlabel $S_{n-2}$ at 175 2.5 
\pinlabel $S_{n-1}$ at 205 2.5 
\endlabellist
\centering
\includegraphics[width=120mm,height=30mm]{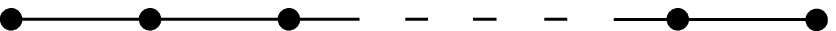}
\caption{{\bf Plumbing diagram of $C_n$, $n\geq2$}}
\label{f:cn}
\end{figure}

The boundary of $C_n$ is the lens space $L(n^2,n-1)$, thus $\pi_1(\partial C_n) \cong H_1(\partial C_n ; \Z) \cong \Z/n^2\Z$. (Note, when we write the lens space $L(p,q)$, we mean it is the $3$-manifold obtained by performing $-\frac{p}{q}$ surgery on the unknot.) This follows from the fact that $[-n-2, -2, \ldots -2]$, with $(n-2)$ many $(-2)$'s is the continued fraction expansion of $\frac{n^2}{1-n}$.

\begin{figure}[ht!]
\labellist
\small\hair 2pt
\pinlabel $n-1$ at -25 240
\pinlabel $n$ at 235 160
\endlabellist
\centering
\includegraphics[scale=0.25]{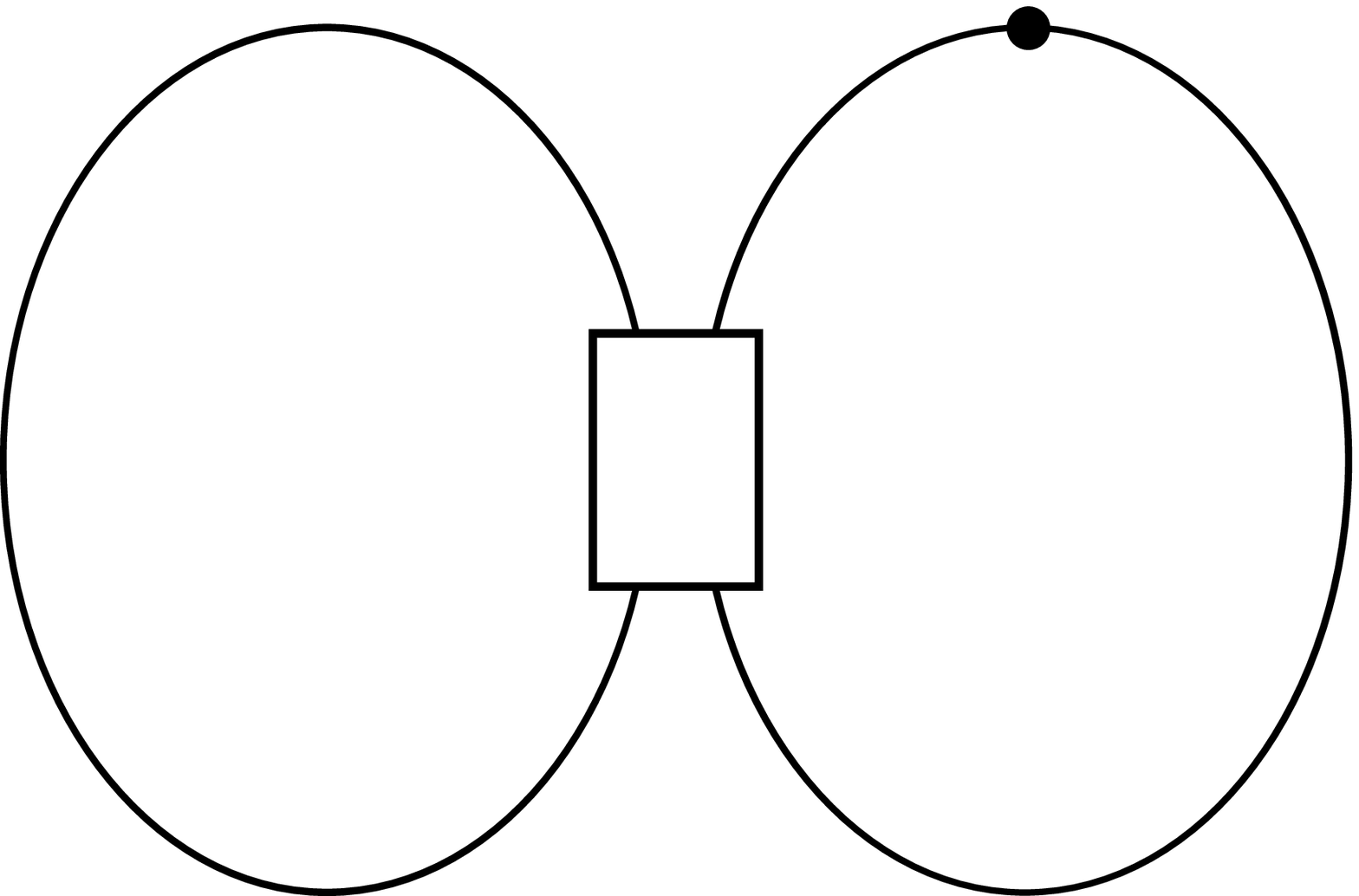}
\caption{{\bf Kirby diagram of $B_n$}}
\label{f:bn}
\end{figure}

Let $B_n$ be the $4$-manifold as defined by the Kirby diagram in Figure~\ref{f:bn} (for a more extensive description of $B_n$, see section~\ref{sec:bn}). The manifold $B_n$ is a rational homology ball, i.e. $H_*(B_n;\Q) \cong H_*(B^4;\Q)$. The boundary of $B_n$ is also the lens space $L(n^2,n-1)$ \cite{CaHa}. Moreover, any self-diffeomorphism of $\partial B_n$ extends to $B_n$ \cite{FSrbd}. Now, we can define the rational blow-down of a $4$-manifold $X$:

\begin{defn}
(\cite{FSrbd}, also see \cite{GS}) Let $X$ be a smooth $4$-manifold. Assume that $C_n$ embeds in $X$, so that $X = C_n \cup_{L(n^2,n-1)} X_0$. The 4-manifold $X_{(n)} = B_n \cup_{L(n^2,n-1)} X_0$ is by definition the \textit{rational blow-down} of $X$ along the given copy of $C_n$.
\end{defn}

Fintushel and Stern \cite{FSrbd} also showed how to compute Seiberg-Witten and Donaldson invariants of $X_{(n)}$ from the respective invariants of $X$. In addition, they showed that certain smooth logarithmic transforms can be alternatively expressed as a series of blow-ups and rational blow-downs. In 1998, (Margaret) Symington \cite{Sym1} proved that the rational blow-down operation can be performed in the symplectic category. More precisely, she showed that if in a symplectic 4-manifold $(M,\omega)$ there is a symplectic embedding of a configuration $C_n$ of symplectic spheres, then there exists a symplectic model for $B_n$ such that the \textit{rational blow-down} of $(M, \omega)$, along $C_n$ is also a symplectic $4$-manifold. (Note, we will often abuse notation and write $C_n$ both for the actual plumbing $4$-manifold and the plumbing configuration of spheres in that $4$-manifold.)

As a result, Symington described when a symplectic $4$-manifold can be symplectically rationally blown down. We would like to investigate the following question: \textbf{when can a symplectic $4$-manifold be symplectically rationally blown up?} By \textit{rational blow-up}, (at least in the smooth category) we mean the inverse operation of \textit{rational blow-down}: if a 4-manifold has an embedded rational homology ball $B_n$, then we can rationally blow it up by replacing the $B_n$ with the negative definite plumbing $C_n$. In order to do that, we first need to verify that rationally blowing up makes sense in the symplectic category. Moreover, we wish to define a ``true" inverse operation to the symplectic rational blow-down. For the symplectic rational blow-down, the existence of a symplectic configuration of spheres $C_n$ in a symplectic manifold makes it possible to perform the operation. In other words, all you need to carry out this procedure is certain ``2-dimensional data" in the symplectic 4-manifold. In the same vein, we will define the symplectic rational blow-up operation, where the 2-dimensional data will be a certain Lagrangian 2-cell complex.

The first step towards such a definition is to equip $B_n$ with a symplectic structure, such that it is the ``standard" symplectic neighborhood of a certain ($2$-dimensional) ``Lagrangian core" $\L_{n,1}$ (see section~\ref{sec:srbuintro} and for an illustration with $n=3$ see Figure~\ref{f:L3}). For $n=2$, $\L_{2,1}$ is simply a Lagrangian $\R P^2$. For $n \geq 3$, $\L_{n,1}$ is a cell complex consisting of an embedded $S^1$ and a $2$-cell $D^2$, whose boundary ``wraps" $n$ times around the embedded $S^1$ (the interior of the $2$-cell $D^2$ is an embedding). Furthermore, the cell complex $\L_{n,1}$ is embedded in such a way that the $2$-cell $D^2$ is Lagrangian. We show, by mirroring the Weinstein Lagrangian embedding theorem, that a symplectic neighborhood of such an $\L_{n,1}$ is entirely standard. As a result, we show that we can obtain a symplectic model for $B_n$ as a standard symplectic neighborhood of this Lagrangian complex $\L_{n,1}$.

Consequently, we prove that a symplectic $4$-manifold $(X,\omega)$ can be symplectically rationally blown up provided there exists this ``Lagrangian core" $\L_{n,1} \subset (X,\omega)$:

\begin{nonumthmA} 
(Theorem~\ref{thm:srbu}) Suppose we can find a ``Lagrangian core" $\L_{n,1} \subset (X,\omega)$, (as in Definition~\ref{d:lnq}), then for some small $\lambda > 0$, there  exists a symplectic embedding of $(B_n,\lambda \omega_n)$ in $(X,\omega)$, and for some $\lambda_0 < \lambda$ and $\mu > 0$, there exists a symplectic 4-manifold $(X',\omega')$ such that $(X',\omega') = ((X,\omega) - (B_n,\lambda_0 \omega_n)) \cup_{\phi} (C_n,\mu \omega'_n)$, where $\phi$ is a symplectic map, and $(B_n,\omega_n)$ and $(C_n,\omega'_n)$ are the symplectic manifolds as defined in section~\ref{sec:sympbncn}. $(X',\omega')$ is called the \textbf{symplectic rational blow-up} of $(X,\omega)$.
\end{nonumthmA}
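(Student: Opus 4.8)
The plan is to establish the two assertions in turn: first produce the symplectic embedding of a rescaled $(B_n,\omega_n)$, and then perform the symplectic cut-and-paste that exchanges it for $(C_n,\omega'_n)$.

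For the first assertion I would appeal directly to the Weinstein-type neighborhood theorem for the Lagrangian core established earlier. By Definition~\ref{d:lnq} the complex $\L_{n,1}$ is a Lagrangian $2$-cell complex whose symplectic neighborhood is forced to be standard; hence a neighborhood of $\L_{n,1}$ in $(X,\omega)$ is symplectomorphic to the model neighborhood of the core inside $(B_n,\omega_n)$ from Section~\ref{sec:sympbncn}. Because $B_n$ deformation retracts onto this core, the entire model $(B_n,\omega_n)$ realizes such a standard neighborhood, so after shrinking by a sufficiently small $\lambda>0$ to fit the neighborhood actually available in $X$ one obtains a symplectic embedding $(B_n,\lambda\omega_n)\hookrightarrow(X,\omega)$. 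This is precisely the first conclusion, and it is the step where the genuinely new input -- the neighborhood theorem -- has already been supplied.

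For the second assertion I would run a symplectic cut-and-paste along the lens space $\partial B_n=L(n^2,n-1)$. Inside the embedded $(B_n,\lambda\omega_n)$ I choose a concentric smaller copy $(B_n,\lambda_0\omega_n)$ with $\lambda_0<\lambda$; the region between them is a slice of the symplectization of the contact structure $\xi$ that the convex boundary of the $B_n$-model induces on $L(n^2,n-1)$. Excising $\mathrm{int}(B_n,\lambda_0\omega_n)$ leaves a symplectic manifold whose boundary is the \emph{concave} contact manifold $(L(n^2,n-1),\xi)$. I then glue in $(C_n,\mu\omega'_n)$: the model $C_n$ is a convex filling of the same lens space, and -- this is the point that also makes Symington's rational blow-down symplectic -- the contact structure it induces on $L(n^2,n-1)$ is contactomorphic to $\xi$. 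A contactomorphism $\psi$ realizing this identification, extended across collars via the standard form of the symplectization, furnishes the gluing map $\phi$; the parameters $\mu$ and $\lambda_0$ are selected so that the convex collar of $(C_n,\mu\omega'_n)$ matches the concave collar left behind in $X$, and since a convex piece glued to a complementary concave boundary closes up, $(X',\omega')$ is a smooth symplectic $4$-manifold.

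The main obstacle I expect is this last gluing. One must verify that the two induced contact structures on $L(n^2,n-1)$ genuinely coincide -- not merely that the boundaries are diffeomorphic -- and that the collars can be matched symplectically after rescaling. Concretely this means writing $\omega_n$ and $\omega'_n$ in symplectization coordinates near the boundary, checking that the Liouville data agree up to a conformal factor, and solving for the admissible windows of $\lambda_0$ and $\mu$. Because $B_n$ and $C_n$ are exactly the two fillings interchanged by the symplectic rational blow-down, this compatibility is available, and the construction is in essence Symington's argument run in reverse, with the orientation bookkeeping (a convex piece removed, a convex piece reinserted against the resulting concave boundary) guaranteeing that the output is the desired rational blow-up.
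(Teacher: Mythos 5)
Your outline follows the same two-stage strategy as the paper --- a standard-neighborhood theorem for the core to obtain the embedding of $(B_n,\lambda\omega_n)$, followed by a cut-and-paste along $L(n^2,n-1)$ using symplectization collars --- but both load-bearing steps are asserted rather than proved, and each hides genuine work. For the first step, it is not enough to say that a neighborhood of $\L_{n,1}$ is ``forced to be standard'' and that $B_n$ retracts onto its core: the uniqueness of the neighborhood (the analogue of Weinstein's theorem, proved in the paper by a Hamiltonian straightening of the collar $\Sigma_{n,1}$ near $\gamma(S^1)$ followed by a relative Moser argument, Lemmas~\ref{l:srbu1} and~\ref{l:srbu2}) only tells you that all such neighborhoods agree with one another. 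You still must identify that abstract standard neighborhood with the specific symplectic manifold $(B_n,\omega_n)$ built from the Stein structure $J_n$ of section~\ref{sec:sympbncn}. In the paper this is Lemma~\ref{l:srbu3}, and it is done by showing that the Legendrian knot $\K_{n,1}$ cut out by the boundary of the core in $S^1\times S^2$ coincides, after Gompf's moves, with the attaching knot $K_2^n$ of Figure~\ref{f:bnsphstein}; nothing in your sketch supplies this identification, and the deformation-retraction remark does not produce a symplectomorphism.

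The second and larger gap is the sentence asserting that ``the contact structure it induces on $L(n^2,n-1)$ is contactomorphic to $\xi$'' because $B_n$ and $C_n$ are the two fillings interchanged by Symington's construction. That is precisely the statement that has to be proved (Proposition~\ref{p:iso3}), and citing the existence of the rational blow-down does not establish it for the particular Stein structures $J_n$ and $J'_n$ chosen here: a lens space can carry several tight contact structures, and two Stein fillings of the same smooth $3$-manifold need not induce contactomorphic boundaries. The paper settles this by computing Gompf's $\Gamma$ invariant for $\partial(B_n,J_n)$, $\partial(C_n,J'_n)$ and $(L(n^2,n-1),\xi_{std})$, matching the spin structures through explicit Kirby moves, and then invoking the Giroux--Honda classification of tight contact structures on lens spaces to upgrade equality of invariants to a contactomorphism. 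Without an argument of this kind your gluing map $\phi$ is not known to exist, so the cut-and-paste cannot be completed. The collar-matching and the choice of the windows for $\lambda_0$ and $\mu$ are handled essentially as you describe (a radial Liouville flow in the completion $(B_n,\omega_n)^+$ composed with the boundary identification), so once Proposition~\ref{p:iso3} is in hand your second step does go through.
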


\noindent In Theorem A above, the scaling coefficient $\lambda$, regulates the ``size" of the rational homology ball $B_n$ that is removed from the symplectic manifold $(X,\omega)$, just like in the definition of the regular symplectic blow-up operation, where one chooses the size of the $4$-ball being removed. The scaling coefficient $\mu$ regulates the symplectic volume of $C_n$ which can ``fit back into" in place of the removed symplectic volume of $B_n$.

The organization of this paper is as follows. In section~\ref{sec:background} we give a detailed description of the rational homology balls $B_n$ and give some background information. In section~\ref{sec:srbumain} we define the ``Lagrangian cores" $\L_{n,1}$ and prove the main theorem. In section~\ref{sec:auxprop}, we prove a proposition used in the proof of main theorem, involving computations of Gompf's invariant for the contact boundaries of the symplectic copies of $B_n$ and $C_n$.

\section{Background}
\label{sec:background}
\subsection{Description of the rational homology balls $B_n$}
\label{sec:bn}
There are several ways to give a description of the rational homology balls $B_n$. One of them is a Kirby calculus diagram seen in Figure~\ref{f:bn}. This represents the following handle decomposition: Start with a 0-handle, a standard 4-disk $D^4$, attach to it a 1-handle $D^1 \times D^3$. Call the resultant space $X_1$, it is diffeomorphic to $S^1 \times D^3$ and has boundary $\partial X_1 = S^1 \times S^2$. Finally, we attach a 2-handle $D^2 \times D^2$. The boundary of the core disk of the 2-handle gets attached to the closed curve, $K$, in $\partial X_1$ which wraps $n$ times around the $S^1 \times \ast$ in $S^1 \times S^2$. We can also represent $B_n$ by a slightly different Kirby diagram, which is more cumbersome to manipulate but is more visually informative, as seen in Figure~\ref{f:bnsph}, where the 1-handle is represented by a pair of balls.

\begin{figure}[ht!]
\labellist
\small\hair 2pt
\pinlabel $n-1$ at 475 18
\pinlabel $\}n$ at 190 65
\endlabellist
\centering
\includegraphics[scale=0.5]{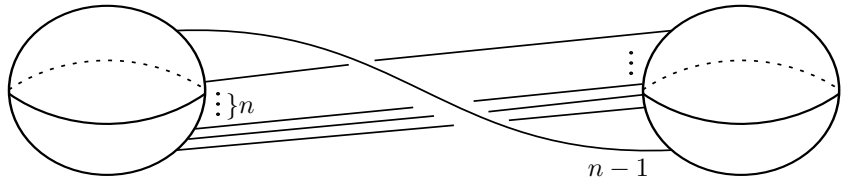}
\caption{{\bf Another Kirby diagram of $B_n$}}
\label{f:bnsph}
\end{figure}

\begin{figure}[ht]
\begin{minipage}[b]{0.45\linewidth}
\centering
\includegraphics[scale=0.30]{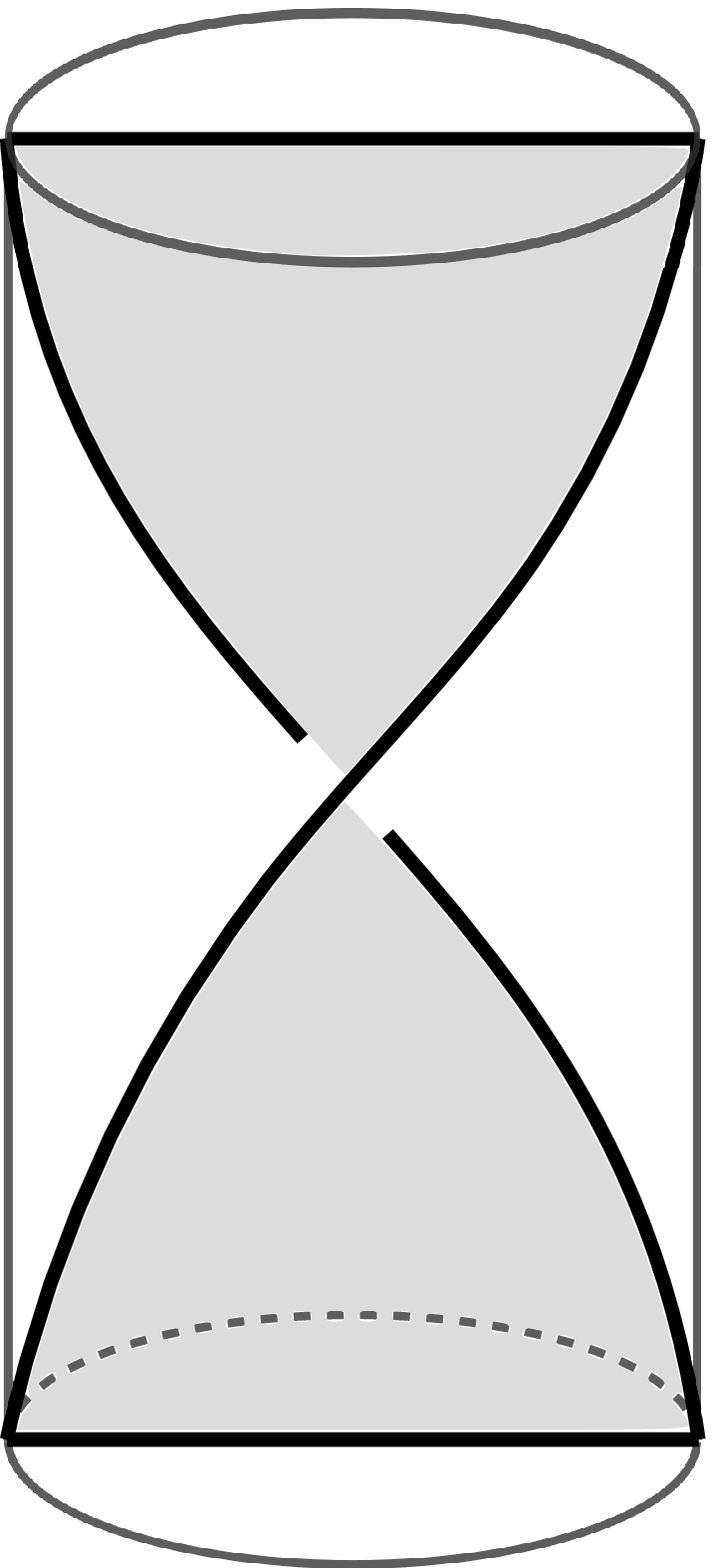}
\caption{$L'_2$}
\label{f:L2}
\end{minipage}
\hspace{0.5cm}
\begin{minipage}[b]{0.45\linewidth}
\centering
\includegraphics[scale=0.30]{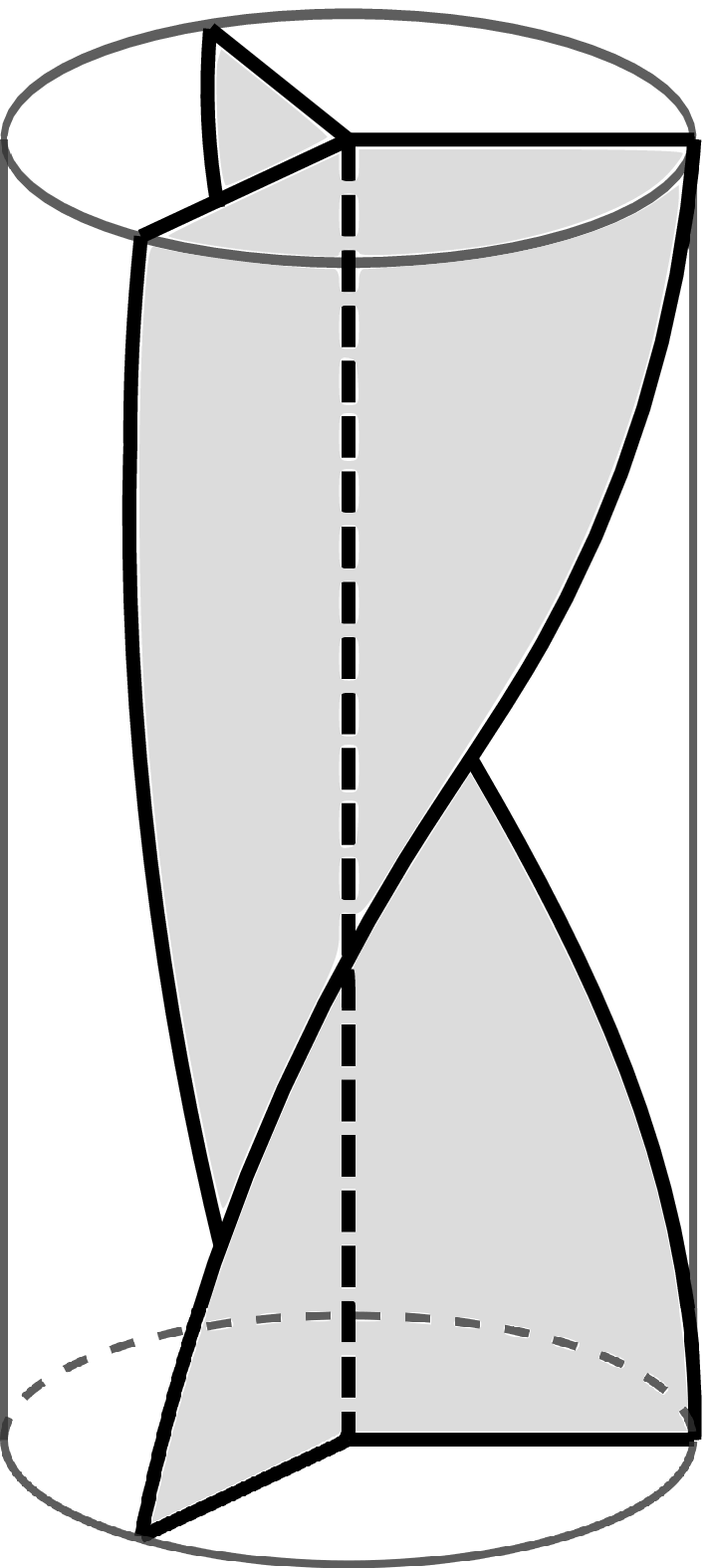}
\caption{$L'_3$}
\label{f:L3}
\end{minipage}
\end{figure}

The rational homology ball $B_2$ can also be described as an unoriented disk bundle over $\mathbb{R}P^2$. Since $\mathbb{R}P^2$ is the union of a Mobius band $M$ and a disk $D$, we can visualize $\mathbb{R}P^2$ sitting inside $B_2$, with the Mobius band and its boundary $(M, \partial M)$ embedded in $(X_1 \cong S^1 \times D^3, \partial X_1 \cong S^1 \times S^2)$ (Figure~\ref{f:L2}, with the ends of the cylinder identified), and the disk $D$ as the core disk of the attaching 2-handle. We will construct something similar for $n\geq 3$. Instead of the Mobius band sitting inside $X_1$, as for $n=2$, we have a ``$n$-Mobius band" (a Moore space), $L'_n$, sitting inside $X_1$. The case of $n=3$ is illustrated in Figure~\ref{f:L3}, again with the ends of the cylinder identified. In other words, $L'_n$ is a singular surface, homotopic to a circle, in $X_1 \cong S^1 \times D^3$, whose boundary is the closed curve $K$ in $\partial X_1 \cong S^1 \times S^2$, and it includes the circle, $S = S^1 \times 0$ in $S^1 \times D^3$. Let $L_n = L'_n \cup_K D$, where $D$ is the core disk of the attached 2-handle (along $K$). We will call $L_n$ the core of the rational homology ball $B_n$; observe, that $L_2 \cong \mathbb{R}P^2$.

The cores $L_n$ will be used as geometrical motivation in the construction of a symplectic structure on the rational homology balls $B_n$. For $n=2$, if we have an embedded $\mathbb{R}P^2$ in $(X,\omega)$, such that $\omega |_{\mathbb{R}P^2} = 0$, (i.e. a Lagrangian $\mathbb{R}P^2$) then the $\mathbb{R}P^2$ will have a totally standard neighborhood, which will be symplectomorphic to the rational homology ball $B_2$. The symplectic structures which we will endow on the rational homology balls $B_n$ will have the cores $L_n \hookrightarrow\ B_n$ be Lagrangian, which we will refer to later as $\L_{n,1}$ in section~\ref{sec:srbumain}.

\subsection{Review of Kirby-Stein calculus}
\label{sec:kscalc}
We will use Eliashberg's Legendrian surgery construction \cite{Eliash} along with Gompf's handlebody constructions of Stein surfaces \cite{Gompf1} to put symplectic structures on the $B_n$s, which will be induced from Stein structures. We will give a brief overview of the aforementioned constructions, beginning with a theorem of Eliashberg's on a 4-manifold admitting a Stein structure \cite{Eliash} \cite{Gompf1}:

\begin{thm}
\label{thm:eli}
A smooth, oriented, open 4-manifold $X$ admits a Stein structure if and only if it is the interior of a (possibly infinite) handlebody such that the following hold:
\begin{enumerate}
\item
Each handle has index $\leq 2$,
\item
Each 2-handle $h_i$ is attached along a Legendrian curve $K_i$ in the contact structure induced on the boundary of the underlying 0- and 1-handles, and
\item
The framing for attaching each $h_i$ is obtained from the canonical framing on $K_i$ by adding a single left (negative) twist.
\end{enumerate}

\noindent A smooth, oriented, compact 4-manifold $X$ admits a Stein structure if and only if it has a handle decomposition satisfying (1), (2), and (3). In either case, any such handle decomposition comes from a strictly plurisubharmonic function (with $\partial X$ a level set).

\end{thm}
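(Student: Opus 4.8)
The plan is to prove the two directions of the equivalence separately: the forward (necessity) direction follows from Morse theory applied to a plurisubharmonic exhaustion, while the reverse (sufficiency) direction is the hard analytic construction.

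For necessity, I would start from the characterization of a Stein manifold as a complex manifold carrying a smooth, strictly plurisubharmonic exhaustion function $\phi : X \to \R$ (in the compact case $\phi$ is strictly plurisubharmonic with $\partial X$ a regular level set). After a $C^\infty$-small perturbation I may assume $\phi$ is Morse without destroying strict plurisubharmonicity, and Morse theory then yields a handlebody decomposition in which each $k$-handle corresponds to an index-$k$ critical point. The key observation is that strict plurisubharmonicity forces every critical point to have index $\leq n = \dim_\C X = 2$: at a critical point the negative eigenspace of the real Hessian contains no complex line (on such a line the positivity of the Levi form would contradict negative definiteness of the Hessian), so it is totally real and hence has real dimension at most $n$. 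This gives condition (1). The regular level sets $\{\phi = c\}$ are strictly pseudoconvex and so carry the contact structure $\xi$ of complex tangencies; analyzing the standard local normal form of $\phi$ near an index-$2$ critical point shows that the descending (attaching) sphere of each $2$-handle can be isotoped to a Legendrian curve for $\xi$, giving (2), and that the framing induced by the attachment differs from the canonical (contact, i.e.\ Thurston--Bennequin) framing by exactly one negative twist, giving (3).

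For sufficiency I would construct the strictly plurisubharmonic function inductively over the handles of the given decomposition. Each $0$-handle is a ball with the model $|z|^2$, and each $1$-handle admits a standard Weinstein-type extension keeping the function strictly plurisubharmonic and the evolving boundary pseudoconvex. The crux is the $2$-handle step: given a Legendrian attaching circle $K_i$ with framing $\mathrm{tb}(K_i)-1$, one must extend $\phi$ over $D^2 \times D^2$ so that it agrees with the existing structure near the attaching region and keeps every intermediate level set pseudoconvex. The framing hypothesis is precisely the compatibility condition that makes this possible — it is what marks a \emph{critical} Weinstein handle attachment — and with any other framing the Levi form could not remain positive across the handle.

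The main obstacle is exactly this last step: producing a local strictly plurisubharmonic model on the $2$-handle whose level sets stay pseudoconvex while realizing the prescribed smooth attachment along a Legendrian knot with the $(\mathrm{tb}-1)$ framing. This is the analytic heart of Eliashberg's theorem and requires his flexibility ($h$-principle) techniques for Stein structures together with a delicate study of pseudoconvex hypersurfaces in dimension three; it is substantially deeper than the necessity direction, which is essentially classical Morse theory plus the totally-real constraint on indices.
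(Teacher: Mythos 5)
The paper does not prove this statement: Theorem~\ref{thm:eli} is quoted verbatim as background from Eliashberg's paper \cite{Eliash} (see also \cite{Gompf1}), so there is no argument in the text to compare yours against. Judged on its own terms, your outline correctly reproduces the standard architecture of Eliashberg's proof. The necessity direction is essentially complete as sketched: a strictly plurisubharmonic Morse exhaustion has critical points of index at most $2$ because the negative eigenspace of the real Hessian cannot contain a complex line (the Levi form is, up to a positive constant, the average of the real Hessian over the circle of directions in such a line, so positivity of the former contradicts negative definiteness of the latter), the regular levels are strictly pseudoconvex and carry the field of complex tangencies, and the local analysis at an index-$2$ point produces a Legendrian attaching circle with framing $tb-1$.

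The gap is the sufficiency direction, and you have named it rather than closed it. Saying that the $2$-handle step requires Eliashberg's flexibility techniques together with a study of pseudoconvex hypersurfaces is an accurate description of what is missing, but that step is the entire content of the theorem in this direction: one must produce an explicit totally real model for the core of the handle, extend the complex structure and the plurisubharmonic function over $D^2 \times D^2$ so that the new boundary is again strictly pseudoconvex, verify that the $tb-1$ framing is precisely the condition under which the corner can be smoothed pseudoconvexly, and, in the open (possibly infinite handlebody) case, control the limit of infinitely many such attachments so that the resulting almost complex structure is integrable and the exhaustion is proper. None of that is supplied. There is also a smaller unaddressed point on the necessity side: the attaching sphere of an index-$2$ handle is only Legendrian after a preliminary isotopy or approximation in the pseudoconvex level set, and one must check this isotopy does not alter the induced framing. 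As a summary of the proof strategy your proposal is faithful; as a self-contained proof it is incomplete exactly where the theorem is hard.
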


From Theorem~\ref{thm:eli}, it follows that if we wanted to construct a Stein surface $S$, such that its strictly plurisubharmonic Morse function did not have any index 1 critical points, then all we have to do to give a handlebody description of $S$ is to specify a Legendrian link $L$ in $S^3 = \partial B^4 = \partial$(0-handle), and attach 2-handles with framing $tb(K_i) - 1$, where $K_i$ are the components the framed link $L$. If we allow index 1 critical points, then we must include 1-handles in the handlebody decomposition of $S$. If a handle decomposition of a compact, oriented 4-manifold has only handles with index 0, 1, or 2, then all that one needs to specify it is a framed link in $\#mS^1 \times S^2 = \partial$ (0-handle $\cup$ 1-handles). Consequently, in order to deal with arbitrary Stein surfaces, Gompf \cite{Gompf1} established a standard form for Legendrian links in $\#mS^1 \times S^2$:

\begin{defn}
\label{d:stdform}
(\cite{Gompf1}, Definition 2.1) A \textit{Legendrian link diagram} in \textit{standard form}, with $m \geq 0$ 1-handles, is given by the following data (see Figure~\ref{f:stdform}):
\begin{enumerate}
\item
A rectangular box parallel to the axes in $\mathbb{R}^2$,
\item
A collection of $m$ distinguished segments of each vertical side of the box, aligned horizontally in pairs and denoted by balls, and
\item
A front projection of a generic Legendrian tangle (i.e. disjoint union of Legendrian knots and arcs) contained in the box, with endpoints lying in the distinguished segments and aligned horizontally in pairs.
\end{enumerate}

\end{defn}

\begin{figure}[ht!]
\labellist
\small\hair 2pt
\pinlabel $\mathbf{Legendrian}$ at 320 500
\pinlabel $\mathbf{tangle}$ at 330 465
\endlabellist
\centering
\includegraphics[scale=0.25]{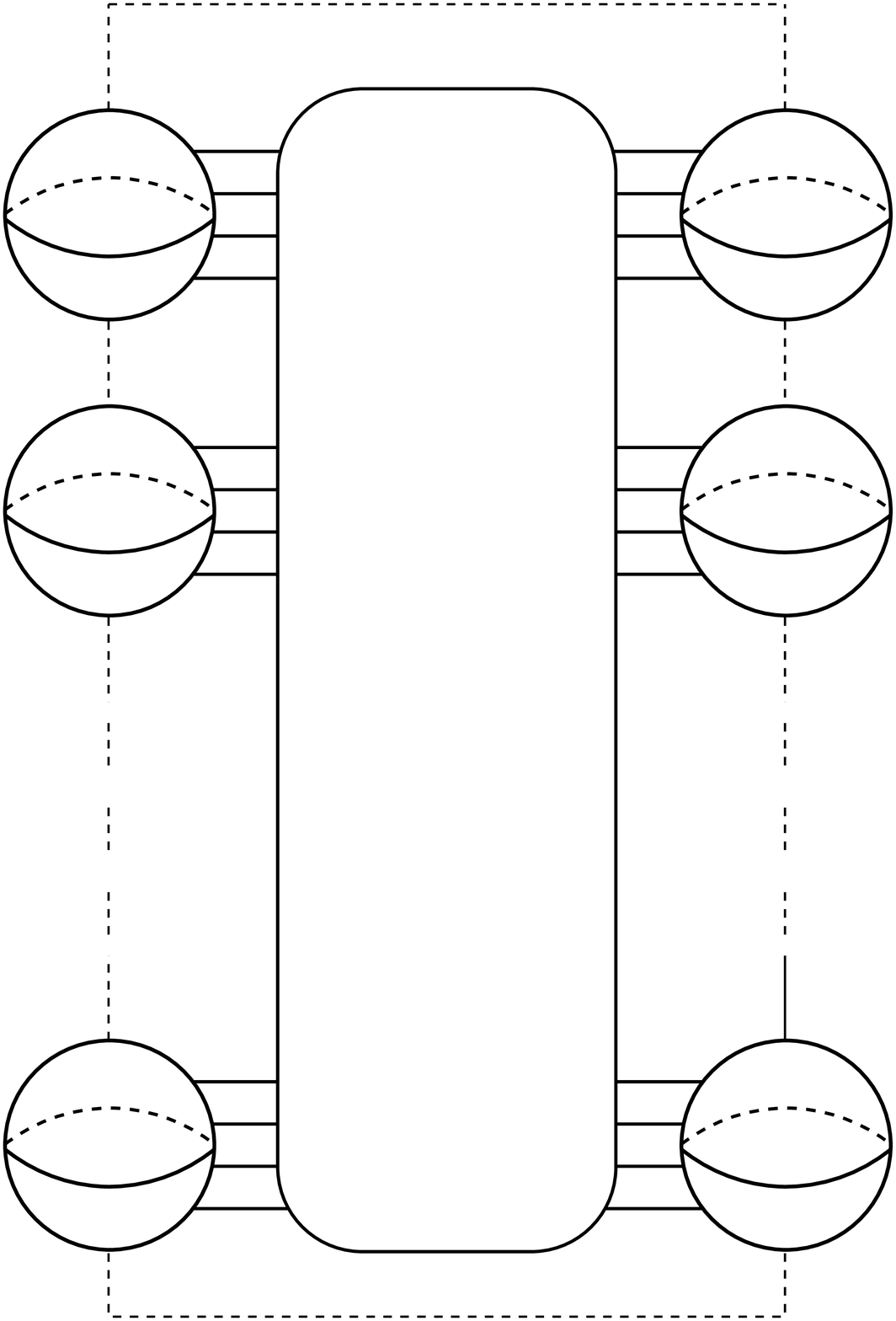}
\caption{{\bf Legendrian link diagram in standard form}}
\label{f:stdform}
\end{figure}

All one needs to do is attach 1-handles to each pair of balls and one gets a link in $\#mS^1 \times S^2$. Using this definition, in Theorem 2.2 in \cite{Gompf1}, Gompf establishes a full list of Kirby-Legendrian calculus type moves that will relate any two such diagrams. More specifically, the theorem states that any two Legendrian links in standard form are contact isotopic in $\partial (\#mS^1 \times S^2)$ if and only if they are related by a sequence of those moves.

The classical invariants of Legendrian knots (see for example \cite{OzSt,Et}), such as the Thurston Bennequin number $tb(K)$ and the rotation number $rot(K)$ still make sense for the Legendrian link diagrams in standard form, although with a few caveats. Both $tb(K)$ and $rot(K)$ can be computed for a knot $K$ that's part of a Legendrian link diagram as in Figure~\ref{f:stdform} from the same formulas as in a standard front projection of Legendrian knots in $\mathbb{R}^3$ (also see Figure~\ref{f:cusps}):
\begin{equation} 
\label{eq:tb}
tb(K) = w(K)-\frac{1}{2}(\lambda(K) + \rho(K)) = w(K) - \lambda(K)
\end{equation}
\begin{equation}
\label{eq:rot}
rot(K) = \lambda_- - \rho_+ = \rho_- - \lambda_+ \, .
\end{equation}

\begin{figure}[ht!]
\labellist
\small\hair 2pt
\pinlabel $\lambda_+$ at 50 10
\pinlabel $\rho_+$ at 150 10
\pinlabel $\lambda_-$ at 250 10
\pinlabel $\rho_-$ at 360 10
\endlabellist
\centering
\includegraphics[height = 30mm, width = 120mm]{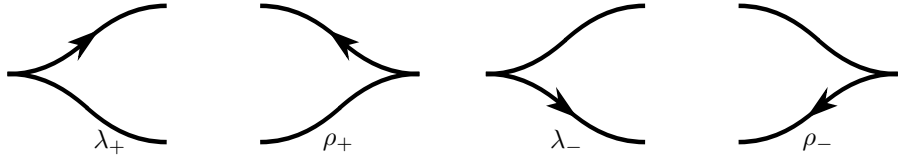}
\caption{Cusps in a front projection of a Legendrian knot}
\label{f:cusps}
\end{figure} 

The invariant $rot(K)$ doesn't change under Gompf's Kirby-Legendrian calculus type moves. However, one of the moves changes $tb(K)$ by twice the number of times (with sign) that $K$ runs over the 1-handle (involved in the move). The change is due to how it is obtained from the diagram and not the canonical framing. Moreover, it is shown that in these Legendrian diagrams (as in Figure~\ref{f:stdform}) the number $tb(K) + rot(K) + 1$ is always congruent modulo $2$ to the number of times that $K$ crosses the 1-handles. 

Putting together the Legendrian link diagrams in standard form, the classical Legendrian knot invariants that can be read from them, the complete list of their Kirby-Legendrian calculus type moves, and Eliashberg's Theorem~\ref{thm:eli}, the following characterization of compact Stein surfaces with boundary can be made:

\begin{prop}
\label{thm:g2}
\cite{Gompf1} A smooth, oriented, compact, connected 4-manifold $X$ admits the structure of a Stein surface (with boundary) if and only if it is given by a handlebody on a Legendrian link in standard form (Definition~\ref{d:stdform}) with the $i$'th 2-handle $h_i$, attached to the $i$'th link component $K_i$, with framing $tb(K_i) - 1$ (as given by Formula~\ref{eq:tb}). Any such handle decomposition is induced by a strictly plurisubharmonic function. The Chern class $c_1(J) \in H^2(X;\mathbb{Z})$ of such a Stein structure $J$ is represented by a cocycle whose value on each $h_i$, oriented as in Theorem~\ref{thm:eli}, is $rot(K_i)$ (as given by Formula~\ref{eq:rot}).
\end{prop}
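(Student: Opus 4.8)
The plan is to deduce this from Eliashberg's Theorem~\ref{thm:eli}, translating its abstract framing condition into the concrete Thurston--Bennequin framing of a standard-form diagram, and then to compute the Chern class as an obstruction along the attaching circles. For the forward direction, suppose $X$ carries a Stein structure $J$. By the compact case of Theorem~\ref{thm:eli}, $X$ has a handle decomposition with all handles of index $\leq 2$ in which each $2$-handle $h_i$ is attached along a Legendrian knot $K_i$ in the contact boundary of the union $X_1$ of the $0$- and $1$-handles, with framing obtained from the canonical (contact) framing by adding one left twist. Since $X_1$ is the boundary connected sum $\natural^m(S^1 \times D^3)$, its boundary is the standard contact $\#^m(S^1 \times S^2)$, so the Legendrian link $L = \bigcup_i K_i$ sits there; by Gompf's standard-form theorem (Theorem 2.2 of \cite{Gompf1}, whose moves realize contact isotopy) we may assume $L$ is drawn in standard form as in Definition~\ref{d:stdform}. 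The canonical framing of a Legendrian knot is its contact framing, which in such a diagram corresponds to the coefficient $tb(K_i)$ computed by Formula~\ref{eq:tb}; adding the one left twist of condition (3) lowers the coefficient by one, yielding the attaching framing $tb(K_i) - 1$.

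Conversely, given a handlebody built on a standard-form Legendrian link with each $h_i$ attached along $K_i$ with framing $tb(K_i) - 1$, conditions (1) and (2) of Theorem~\ref{thm:eli} are immediate from the construction, and the same identification shows that this framing is the contact framing with one left twist added, so condition (3) holds as well; hence $X$ admits a Stein structure, and the final clause of Theorem~\ref{thm:eli} provides the strictly plurisubharmonic function inducing the decomposition.

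It remains to compute $c_1(J)$. Because the subhandlebody $X_1$ is homotopy equivalent to a wedge of circles, the complex tangent bundle $(TX, J)$ admits a complex trivialization over $X_1$ fixed by the plurisubharmonic structure, and a cocycle representing $c_1(J)$ then assigns to each $2$-cell $h_i$ the obstruction to extending this trivialization over the core disk of $h_i$. This obstruction is localized along the attaching circle $K_i$, where the contact plane field supplies a preferred complex trivialization; it is precisely the winding of the tangent line $TK_i$ within the contact planes relative to that trivialization, which is the rotation number $rot(K_i)$ of Formula~\ref{eq:rot}. Matching the orientation of $h_i$ fixed in Theorem~\ref{thm:eli} gives the asserted value $rot(K_i)$.

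The main obstacle I anticipate is the sign and orientation bookkeeping, both in the framing identification and, more delicately, in the Chern-class computation, where one must pin down the precise relationship between the complex-tangent trivialization along $K_i$, the framing read from the diagram, and the chosen orientations; a sign error there would corrupt both the $tb(K_i) - 1$ framing and the $rot(K_i)$ value. This difficulty is sharpened by the presence of $1$-handles, for which there is no global Seifert framing and one must rely throughout on the standard-form versions of $tb$ and $rot$ rather than their $\R^3$ counterparts.
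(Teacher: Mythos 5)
This proposition is not proved in the paper at all: it is quoted verbatim from Gompf \cite{Gompf1} (his Proposition 2.3) as background in the review of Kirby--Stein calculus, so there is no in-paper argument to compare against. Your sketch follows the route of Gompf's original derivation --- translating Eliashberg's Theorem~\ref{thm:eli} into standard-form diagrams and computing $c_1$ as the obstruction to extending a complex trivialization over the $2$-handles, measured by the winding of $TK_i$ in the contact planes --- and is sound in outline. The one point to treat with more care than your sketch does is the identification of the diagram-computed $tb(K_i)$ with ``the'' contact framing: as the paper itself notes, this quantity changes under one of Gompf's moves (by twice the signed number of passes over the $1$-handle involved), so it is a diagram-dependent reference rather than an absolute invariant, and the framing coefficient $tb(K_i)-1$ must be read relative to the same diagram; your closing paragraph correctly identifies this as the place where the bookkeeping lives.
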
 

The benefits of these Legendrian link diagrams, is that one can compute several useful invariants of the Stein surface and its boundary straight from them. In particular, Gompf (\cite{Gompf1}, section 4) gave a complete set of invariants of 2-plane fields on 3-manifolds, up to their homotopy classes, which, in particular, could be used to distinguish contact structures of the boundaries of Stein surfaces. We will describe one such invariant, $\Gamma$, which we will later use in section~\ref{sec:auxprop}. In general, the classification of 2-plane fields on an oriented 3-manifold $M$ is equivalent to fixing a trivialization of the tangent bundle $TM$ and classifying maps $\varphi : M \rightarrow S^2$ up to homotopy, which was done in \cite{Pont}. $\Gamma$ is an invariant of 2-plane fields on closed, oriented 3-manifolds, that is a 2-dimensional obstruction, thus it measures the associated $spin^c$ structure. The advantage of $\Gamma$ is that it can be specified without keeping explicit track of the choice of trivialization of $TM$, and instead can be measured in terms of spin structures of the 3-manifold $M$.

In order to define $\Gamma$ we need to establish some notation and terminology. Let $(X,J)$ be a Stein surface with a Stein structure $J$. There is a natural way to obtain a contact structure $\xi$ on its boundary $\partial X = M$, by letting $\xi$ be the field of complex lines in $TM \subset TX|_M$, in other words 
\begin{equation*}
\xi = T\partial X \cap JT\partial X \, . 
\end{equation*}

Assume $X$ can be presented in standard form, as in Figure~\ref{f:stdform}. We can construct a manifold $X^*$, which is obtained from $X$ by surgering out all of the 1-handles of $X$ (this can be done canonically). As a result, we have $\partial X = \partial X^* = M$, and $X^*$ can be described by attaching 2-handles along a framed link $L$ in $\partial B^4 = S^3$, which can be obtained by gluing the lateral edges of the box in Figure~\ref{f:stdform}. The 1-handles of $X$ become 2-handles of $X^*$ that are attached along unknots with framing $0$, call this subset of links $L_0 \subset L$. The 2-handles of $X$ remain 2-handles of $X^*$, with the same framing. Since $\Gamma$ will be defined in terms of the spin structures of $M$, it is useful to express the spin structures of $M$ as characteristic sublinks of $L$; thus, for each $\mathfrak{s} \in Spin(M)$, we will associate a characteristic sublink $L(\mathfrak{s}) \subset L$. Recall, that $L'$ is a \textit{characteristic sublink} of $L$ if for each component $K$ of $L$, the framing of $K$ is congruent modulo $2$ to $\ell k(K,L')$ \cite{Gompf1} \cite{GS}. (Note, here $\ell k(A,B)$ is the usual linking number if $A \neq B$, and the framing of $A$ if $A = B$, and is extended bilinearly if $A$ or $B$ have more than one component.) Finally, we can define $\Gamma$ for a boundary of a compact Stein surface, by a formula obtained from a diagram of the Stein surface in standard form:

\begin{thm}
\label{thm:g3}
(Gompf \cite{Gompf1}, Theorem 4.12) Let $X$ be a compact Stein surface in standard form, with $\partial X = (M,\xi)$, and $X^*$, $L = K_1 \cup \ldots \cup K_m$ and $L_0$ as defined above. Let $\{\alpha_1, \ldots, \alpha_m \} \subset H_2(X^*;\mathbb{Z})$ be the basis determined by $\{K_1, \ldots, K_m \}$. Let $\mathfrak{s}$ be a spin structure on $M$, represented by a characteristic sublink $L(\mathfrak{s}) \subset L$. Then $PD\Gamma(\xi,\mathfrak{s})$ is the restriction to $M$ of the class $\rho \in H^2(X^*;\mathbb{Z})$ whose value on each $\alpha_i$ is the integer
\begin{equation}
\label{eq:gamma}
\left\langle \rho, \alpha_i \right\rangle = \frac{1}{2}(rot(K_i) + \ell k(K_i, L_0 + L(\mathfrak{s}))) \, ,
\end{equation}

\noindent (note: $rot(K_i)$ is defined to be $0$ if $K_i \subset L_0$.)     

\end{thm}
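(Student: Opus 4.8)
The plan is to prove this by relative obstruction theory, computing $\Gamma(\xi,\mathfrak{s})$ as a primary obstruction pulled back from the filling $X^*$. Recall that an oriented $2$-plane field $\xi$ on the closed oriented $3$-manifold $M$ is the same data as a nowhere-zero normal vector field $v$, which, after fixing a trivialization of $TM$, becomes a map $M \to S^2$; the invariant $\Gamma(\xi,\mathfrak{s}) \in H^2(M;\mathbb{Z})$ is the $2$-dimensional (primary) obstruction to homotoping this map to a reference, where the spin structure $\mathfrak{s}$ is used precisely to pin down the trivialization over the $2$-skeleton so that $\Gamma$ is well defined without carrying the trivialization explicitly. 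First I would reformulate $\Gamma(\xi,\mathfrak{s})$ as a \emph{relative} obstruction: since $M = \partial X^*$ and $X^*$ is built from $B^4$ by attaching $2$-handles along $L = K_1 \cup \dots \cup K_m$, I would extend the relevant bundle data over $X^*$ and read off the obstruction cell-by-cell on the $2$-handles, i.e.\ evaluate the resulting relative class $\rho \in H^2(X^*;\mathbb{Z})$ on the basis $\{\alpha_1,\dots,\alpha_m\}$ of $H_2(X^*;\mathbb{Z})$ dual to the handles.

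The second step is to identify the two contributions to $\langle \rho, \alpha_i\rangle$ and the source of the factor $\tfrac12$. The clean way to see the latter is that $\Gamma(\xi,\mathfrak{s})$ is the affine difference, in the torsor $Spin^c(M)$, between the $spin^c$ structure $\mathfrak{t}_\xi$ determined by $\xi$ and the $spin^c$ structure $\mathfrak{t}_\mathfrak{s}$ determined by $\mathfrak{s}$; doubling this difference recovers $c_1(\mathfrak{t}_\xi) - c_1(\mathfrak{t}_\mathfrak{s}) = c_1(\mathfrak{t}_\xi)$ since $c_1(\mathfrak{t}_\mathfrak{s}) = 0$, so $\Gamma$ is a canonical integral lift of half of $c_1(\mathfrak{t}_\xi)$, the lift being pinned down by $\mathfrak{s}$. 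Now $\xi = T\partial X \cap JT\partial X$ is the complex-tangency field, so $\mathfrak{t}_\xi$ is the restriction of the canonical $spin^c$ structure of $(X,J)$, whose $c_1$ is $c_1(X,J)$; by Proposition~\ref{thm:g2} this evaluates to $rot(K_i)$ on the $i$-th handle, accounting for the $rot(K_i)$ term. The linking term $\ell k(K_i, L_0 + L(\mathfrak{s}))$ is the framing/spin correction that selects the integral lift: the $2$-handles are attached with framing $tb(K_i)-1$ rather than with the framing dictated by $\mathfrak{s}$, and this discrepancy is recorded by linking against the $0$-framed unknots $L_0$ (the surgered $1$-handles) and against the characteristic sublink $L(\mathfrak{s})$ representing $\mathfrak{s}$.

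The third step is to verify that the right-hand side of \eqref{eq:gamma} is an integer, which is needed for $\rho$ to be well defined. Here I would invoke the characteristic-sublink condition together with the parity relation quoted before the statement: being characteristic means the framing $tb(K_i)-1$ is congruent mod $2$ to $\ell k(K_i, L(\mathfrak{s}))$, while the congruence $tb(K_i) + rot(K_i) + 1 \equiv \ell k(K_i, L_0) \pmod 2$ (the number of $1$-handle crossings of $K_i$ equals $\ell k(K_i,L_0)$ mod $2$) lets me combine the two to conclude $rot(K_i) \equiv \ell k(K_i, L_0 + L(\mathfrak{s})) \pmod 2$, hence the evenness of $rot(K_i) + \ell k(K_i, L_0 + L(\mathfrak{s}))$. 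Finally I would check independence of the auxiliary choices, namely the spin trivialization of $TM$ over the $2$-skeleton and the particular sublink representing $\mathfrak{s}$, so that the restriction of $\rho$ to $M$ depends only on $(\xi,\mathfrak{s})$.

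I expect the \textbf{main obstacle} to be the second step: translating the abstract primary obstruction into the concrete linking and rotation data, and in particular pinning down the exact combination $L_0 + L(\mathfrak{s})$ and the factor $\tfrac12$. This requires carefully comparing three framings on each $K_i$ — the contact (Thurston--Bennequin) framing built into the Stein handle attachment, the handle framing used to define $\ell k$, and the framing singled out by the spin structure — and tracking how each enters the relative Euler class of the normal field; getting the signs and the halving correct is the delicate part, whereas the integrality and independence checks of the third step are comparatively routine.
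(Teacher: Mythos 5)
This statement is not proved in the paper at all: it is quoted verbatim as Theorem 4.12 of Gompf's \emph{Handlebody construction of Stein surfaces} and used as a black box, so there is no internal proof to compare your attempt against. Judged on its own terms, your outline is consistent with the strategy of Gompf's original argument: $\Gamma(\xi,\mathfrak{s})$ is defined as the $2$-dimensional obstruction for the normal map $M \to S^2$ with the trivialization pinned down by $\mathfrak{s}$, one extends the relevant data over $X^*$ and evaluates handle by handle, the factor $\tfrac12$ comes from $\Gamma$ being the integral lift of $\tfrac12 c_1(\mathfrak{t}_\xi)$ determined by the spin structure, the $rot(K_i)$ term comes from $\langle c_1(X,J),\alpha_i\rangle = rot(K_i)$ as in Proposition~\ref{thm:g2}, and the term $\ell k(K_i, L_0 + L(\mathfrak{s}))$ is the evaluation of the Poincar\'e dual of the class carried by $L_0 + L(\mathfrak{s})$ via the linking matrix of $X^*$. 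Your integrality check is correct and complete: combining the characteristic condition $tb(K_i)-1 \equiv \ell k(K_i,L(\mathfrak{s}))$ with the parity relation $tb(K_i)+rot(K_i)+1 \equiv \ell k(K_i,L_0) \pmod 2$ does give $rot(K_i) \equiv \ell k(K_i, L_0 + L(\mathfrak{s})) \pmod 2$. Be aware, though, that what you correctly flag as the main obstacle --- reconciling the contact framing, the handle framing, and the spin framing, and showing the relative obstruction class restricts to $\Gamma$ rather than to it plus a correction --- is precisely where the substance of Gompf's Section 4 lies, so as written this is a sound plan rather than a proof.
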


\subsection{Description of symplectic structures of $B_n$ and $C_n$}
\label{sec:sympbncn}
First, we will describe the symplectic structure $\omega_n$ on $B_n$, which will be induced from the Stein structure $J_n$. We will present $(B_n, J_n)$ as a Legendrian diagram in standard form, as in Definition~\ref{d:stdform}. However, before that can be done we must first express the $B_n$s with a slightly different Kirby diagram, one that has appropriate framings with which its 2-handles are attached, thus enabling us to put it in Legendrian standard form. Figure~\ref{f:bnsphm} shows another Kirby diagram of $B_n$, that is equivalent to the one in Figure~\ref{f:bn} and Figure~\ref{f:bnsph}, by a series of Kirby moves seen in Appendix~\ref{a:appa}. 

\begin{figure}[ht!]
\labellist
\small\hair 2pt
\pinlabel $-n-1$ at 465 155
\pinlabel $\}n$ at 190 115
\pinlabel $-n-1$ at 190 310
\pinlabel $-n$ at 327 262
\endlabellist
\centering
\includegraphics[scale=0.5]{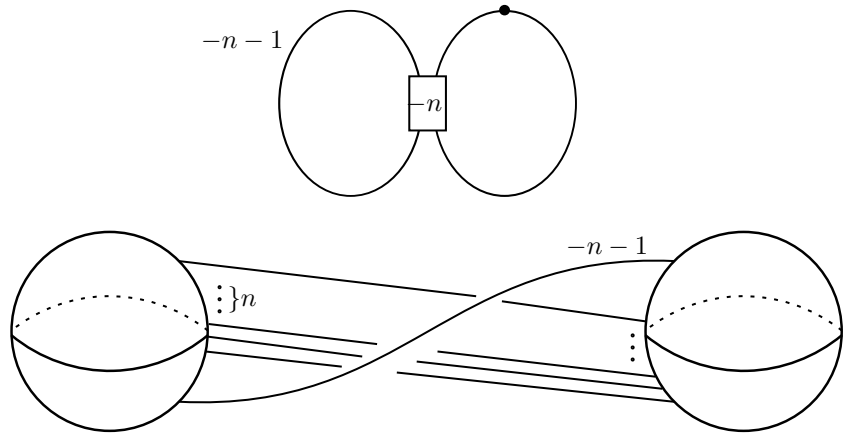}
\caption{{\bf Another Kirby diagram of $B_n$}}
\label{f:bnsphm}
\end{figure}

Having this Kirby diagram for $B_n$, we are now ready to put it in Legendrian standard form, as seen in Figure~\ref{f:bnsphstein}. (Note, this is the same Stein structure on $B_n$ as it recently appeared in \cite{LM}, for $q=1$.)  The orientation was chosen arbitrarily, but will remain fixed throughout. Observe, that the Legendrian knot $K_2^n$ in the diagram has the following classical invariants:
\begin{equation*}
tb(K_2^n) = w(K_2^n) - \lambda(K_2^n) = -(n-1) - 1 = -n 
\end{equation*}
\begin{equation*}
rot(K_2^n) = \lambda_- - \rho_+ = 1 \, . 
\end{equation*}

\begin{figure}[ht!]
\labellist
\small\hair 2pt
\pinlabel $-1$ at 425 133
\pinlabel $\}n$ at 185 100
\pinlabel $K_2^n$ at 180 16
\endlabellist
\centering
\includegraphics[scale=0.55]{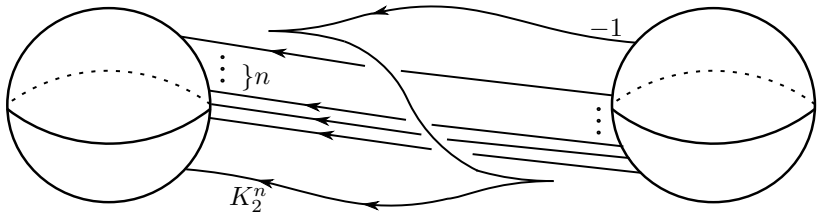}
\caption{{\bf Kirby diagram of $B_n$ with Stein structure $J_n$}}
\label{f:bnsphstein}
\end{figure}

\noindent Therefore, the framing with which the 2-handle is attached is precisely as dictated by Theorem~\ref{thm:g2}, namely $tb(K_2^n) - 1 = -n-1$.

Recall, that since the set of Stein structures of a 4-manifold is a subset of the set of almost-complex structures of a 4-manifold, then from the Stein surface $(B_n,J_n)$ we naturally get a symplectic 4-manifold $(B_n, \omega_n)$, where the symplectic form $\omega_n$ is induced by the almost-complex structure $J_n$.

Second, we present a symplectic structure $\omega'_n$ on $C_n$, also obtained from the Stein structure $J'_n$ on $C_n$, which we exhibit explicitly with a Legendrian link diagram (with no 1-handles). We label the unknots in the plumbing diagram of $C_n$, (as seen in Figure~\ref{f:cn2}), $W_1, W_2, \ldots W_{n-1}$. We put a Stein structure $J'_n$ on $C_n$, seen in Figure~\ref{f:cnstein}, by making the unknots, representing the spheres in the plumbing configuration, Legendrian in such a way that the framing of each unknot corresponds to the required framing as dictated by Theorem~\ref{thm:g3}: $tb(W_i) - 1$. Observe, that in this particular choice of Legendrian representatives of unknots, we have $rot(W_1) = -n$, $rot(W_2) = \cdots = rot(W_{n-1}) = 0$.

\begin{figure}[ht!]
\labellist
\small\hair 2pt
\pinlabel $-(n+2)$ at 30 4.7
\pinlabel $-2$ at 60 4.7
\pinlabel $-2$ at 90 4.7
\pinlabel $-2$ at 173 4.7
\pinlabel $-2$ at 203 4.7
\pinlabel $W_1$ at 32 2.5 
\pinlabel $W_2$ at 62 2.5 
\pinlabel $W_3$ at 92 2.5 
\pinlabel $W_{n-2}$ at 175 2.5 
\pinlabel $W_{n-1}$ at 205 2.5 
\endlabellist
\centering
\includegraphics[height=30mm, width=120mm]{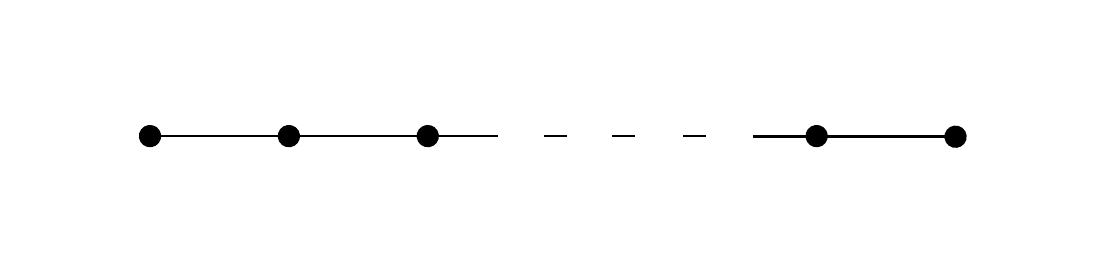}
\caption{{\bf Plumbing diagram of $C_n$, $n\geq2$}}
\label{f:cn2}
\end{figure}

\begin{figure}[ht!]
\labellist
\small\hair 2pt
\pinlabel $-1$ at 400 95
\pinlabel $-1$ at 540 95
\pinlabel $-1$ at 740 95
\pinlabel $-1$ at 860 95
\pinlabel $-1$ at 100 150
\pinlabel $W_1$ at 230 -10 
\pinlabel $W_2$ at 400 10 
\pinlabel $W_3$ at 540 10
\pinlabel $W_{n-2}$ at 720 10 
\pinlabel $W_{n-1}$ at 860 10 
\pinlabel $\leftarrow(n+1)\,\mathrm{cusps}$ at 480 175
\endlabellist
\centering
\includegraphics[height=33mm, width=125mm]{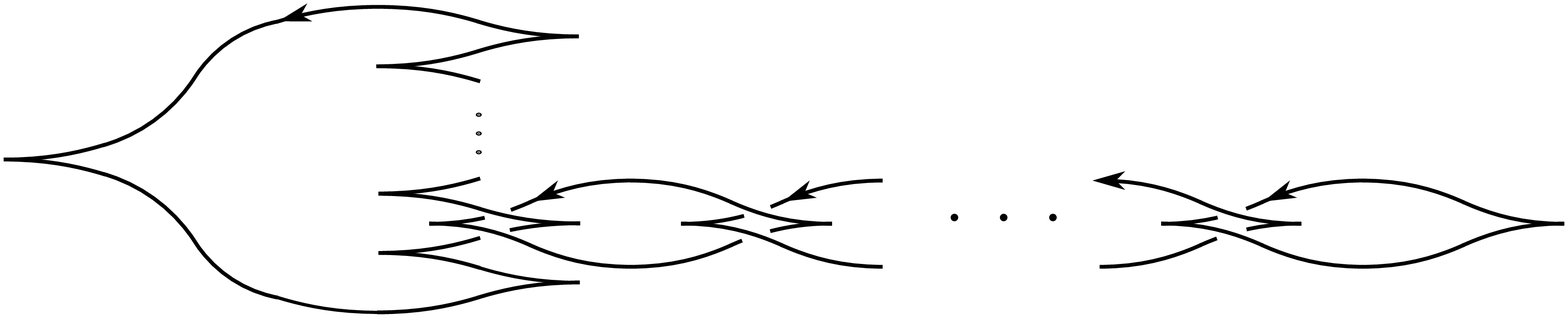}
\caption{{\bf Stein structure $J'_n$ on $C_n$}}
\label{f:cnstein}
\end{figure}

\section{Symplectic rational blow-up - main theorem}
\label{sec:srbumain}
\subsection{Lagrangian cores $\L_{n,q}$.}
\label{sec:srbuintro}
In this section we define the symplectic rational blow-up operation and prove the main theorem. It is important to note, that just like the symplectic blow-up is not unique because of the choice of radius of the removed 4-ball, so to, the symplectic rational blow-up will also not be unique due to the choice of the symplectic volume of the removed rational homology ball $B_n$. Moreover, we also have to make a choice of the symplectic structure on the $B_n$s. Therefore, we will go further, and show that the existence of a certain 2-dimensional Lagrangian core (see section~\ref{sec:bn}) in a symplectic manifold $(X,\omega)$ will have a standard neighborhood that will be our desired symplectic rational homology ball $(B_n,\omega_n)$ as in section~\ref{sec:sympbncn}.

Now we will describe the construction of our Lagrangian cores. First, we take an embedding $\gamma:S^1 \hookrightarrow (X,\omega)$. Next, we consider a Lagrangian immersion $\L:D^2 \looparrowright (X,\omega)$, (an embedding on the interior of $D$), such that its boundary ``wraps around" $\gamma(S^1)$, with winding number $n$, so $\gamma(S^1) \hookrightarrow \L(\partial D)$. There is another winding number $q$ that comes in to the picture, so we are going to call this Lagrangian disk immersion $\L_{n,q}$. Let $P$ be the following bundle over $\gamma(S^1)$:
\begin{equation*}
P = \bigcup_{z \in \gamma(S^1)} \{ \text{plane} \,\, \pi | \pi \subset T_z X,\, \text{oriented},\, \omega(\pi) = 0,\, T_z(\gamma (S^1)) \subset \pi \} \, . 
\end{equation*}

\noindent Because we are restricting to those planes $\pi$ that contain $T_z(\gamma (S^1))$, the bundle $P$ is an $S^1$-bundle. So, after a choice of trivialization,  we have $P \cong S^1 \times \gamma(S^1)$, and a map:
\begin{eqnarray}
\label{eq:lnqhat}
\widehat{\L_{n,q}}&:& \partial D \rightarrow P \cong S^1 \times \gamma(S^1)  \\
\widehat{\L_{n,q}}&:& x \longmapsto (\L_{n,q})_{\ast}(T_x D) \nonumber
\end{eqnarray}

\noindent where $n$ is the degree of the map $\widehat{\L_{n,q}}$ on the first component, and $q$ on the second. Note, that before a choice of trivialization of $P$, $q$ is only defined mod $n$.

Now we state the formal definition of the Lagrangian ``cores", $\L_{n,q}$:

\begin{defn}
\label{d:lnq}
Let $\L_{n,q}: D \looparrowright (X,\omega)$ be a smooth Lagrangian immersion of a 2-disk $D$ into a symplectic 4-manifold $(X,\omega)$, with $n \geq 2$ an integer, and $q$ is an integer defined mod $n$, assuming the following conditions:
\begin{enumerate}[(i)]
\item
$\L_{n,q}(D - \partial D) \hookrightarrow (X,\omega)$ is a smooth embedding.
\item 
There exists a smooth embedding $\gamma:S^1 \hookrightarrow (X,\omega)$ such that $\gamma(S^1) \hookrightarrow \L_{n,q}(\partial D)$.
\item
The pair $(n,q)$ are defined to be the degrees of the maps on the first and second component, respectively of the map $\widehat{\L_{n,q}}: \partial D \rightarrow P \cong S^1 \times \gamma(S^1)$ as defined in (\ref{eq:lnqhat}).

\item 
The map $\widehat{\L_{n,q}}$ is injective, so for any points $x,y \in \partial D$ if $\L_{n,q}(x) = \L_{n,q}(y)$ then $(\L_{n,q})_{\ast}(T_x(D)) \neq (\L_{n,q})_{\ast}(T_y(D))$.

\end{enumerate}
\end{defn}

\noindent Figure~\ref{f:L3} is an illustration of how $\L_{n,q}(D)$ looks like near $\gamma(S^1)$, for $n=3$ and $q=1$. Note, we will use $\L_{n,q}$ to also denote its image in $(X,\omega)$.

\subsection{Statement of the main theorem.}
\label{sec:srbuthm}
Now we are ready to state the main theorem:

\begin{thm}{\textbf{Symplectic Rational Blow-Up.}}
\label{thm:srbu}
Suppose $\L_{n,1} \subset (X,\omega)$, is as in Definition~\ref{d:lnq} with $q=1$, then for some small $\lambda > 0$, there  exists a symplectic embedding of $(B_n,\lambda \omega_n)$ in $(X,\omega)$, and for some $\lambda_0 < \lambda$ and $\mu > 0$, there exists a symplectic 4-manifold $(X',\omega')$ such that $(X',\omega') = ((X,\omega) - (B_n,\lambda_0 \omega_n)) \cup_{\phi} (C_n,\mu \omega'_n)$, where $\phi$ is a symplectic map, and $(B_n,\omega_n)$ and $(C_n,\omega'_n)$ are the symplectic manifolds as defined in section~\ref{sec:sympbncn}. $(X',\omega')$ is called the \textbf{symplectic rational blow-up} of $(X,\omega)$.
\end{thm}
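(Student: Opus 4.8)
The plan is to split the proof into two logically independent parts. The first is a Weinstein-type neighborhood theorem asserting that the Lagrangian core $\L_{n,1}$ possesses a totally standard symplectic neighborhood symplectomorphic to $(B_n,\lambda\omega_n)$ for some small $\lambda>0$; this directly yields the claimed symplectic embedding. The second is a symplectic cut-and-paste: having embedded the ball, remove a slightly smaller concentric copy and glue $(C_n,\mu\omega'_n)$ into the resulting hole along the common boundary $L(n^2,n-1)$, matching the induced contact structures so that the result is a smooth symplectic manifold $(X',\omega')$.

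For the first part, I would mirror the Weinstein Lagrangian embedding theorem. Away from the singular circle $\gamma(S^1)$, the interior of the $2$-cell is an honestly embedded Lagrangian disk, so the classical Weinstein theorem already supplies a standard cotangent model $T^*D^2$ there, with the fiber size governed by a scaling parameter. The entire difficulty concentrates near $\gamma(S^1)$, where the boundary of the disk wraps $n$ times and $n$ local sheets of $\L_{n,1}$ come together. Here condition (iv) of Definition~\ref{d:lnq} --- the injectivity of the tangent-plane map $\widehat{\L_{n,1}}$ --- is precisely the hypothesis guaranteeing that the $n$ sheets meet along $\gamma(S^1)$ with pairwise distinct Lagrangian tangent planes, so that they can be straightened simultaneously into a fixed linear model of $n$ transverse Lagrangian planes sharing the common line $T(\gamma(S^1))$. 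I would first construct this local normal form in a tubular neighborhood of $\gamma(S^1)$, patch it to the cotangent model on the interior of the disk, and then run a Moser-type argument to upgrade the resulting diffeomorphism onto a neighborhood of the standard core in $B_n$ to a symplectomorphism. Since $(B_n,\omega_n)$ was constructed in section~\ref{sec:sympbncn} exactly so that its core is Lagrangian with $(n,q)=(n,1)$ data, the model neighborhood is $(B_n,\lambda\omega_n)$.

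For the second part, choose $\lambda_0<\lambda$ so that $(B_n,\lambda_0\omega_n)$ sits inside the embedded ball with a Liouville collar to spare, and delete its interior. Because $(B_n,\omega_n)$ is a Stein, hence convex, filling, the complement $(X,\omega)-(B_n,\lambda_0\omega_n)$ acquires a concave contact boundary $(L(n^2,n-1),\xi_{B_n})$. The manifold $(C_n,\omega'_n)$ is likewise a Stein filling of $L(n^2,n-1)$, with convex contact boundary $(L(n^2,n-1),\xi_{C_n})$, so gluing the convex boundary of $C_n$ to the concave boundary of the complement will produce a smooth symplectic manifold provided the two contact boundaries agree. Here I would invoke the computation of Gompf's invariant $\Gamma$ from Theorem~\ref{thm:g3}, carried out in section~\ref{sec:auxprop}: comparing $\Gamma$ for $\partial B_n$ and $\partial C_n$ identifies the induced contact structures, and combined with the underlying diffeomorphism $\partial B_n\cong\partial C_n\cong L(n^2,n-1)$ (and the fact that self-diffeomorphisms of the boundary extend) yields the contactomorphism $\phi$. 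Rescaling by an appropriate $\mu>0$ so that the symplectization collars agree under $\phi$, the glued form $\omega'$ on $((X,\omega)-(B_n,\lambda_0\omega_n))\cup_\phi(C_n,\mu\omega'_n)$ is smooth and symplectic by the standard collar argument.

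I expect the principal obstacle to be the neighborhood theorem of the first part, specifically the behavior at $\gamma(S^1)$: producing a robust local symplectic normal form for the $n$ coincident Lagrangian sheets, verifying that condition (iv) genuinely suffices to straighten them, and then ensuring that this local model glues coherently with the cotangent model on the interior of the disk so that a single Moser deformation succeeds globally. A secondary technical point is the bookkeeping of the scalings $\lambda,\lambda_0,\mu$ so that the Liouville collars on the two sides of the contact hypersurface match exactly; this is what makes the cut-and-paste yield a genuinely smooth symplectic form rather than merely a topological gluing.
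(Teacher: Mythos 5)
Your overall architecture --- a Weinstein-type standard-neighborhood theorem for $\L_{n,1}$ followed by a contact-boundary cut-and-paste justified by Gompf's $\Gamma$ invariant --- is the same as the paper's, and your second part (removing a smaller concentric copy, matching the tight contact structures on $L(n^2,n-1)$ via $\Gamma$ and the Giroux--Honda classification, and adjusting $\mu$ so the symplectization collars match) is essentially the paper's gluing argument via symplectic completions. Your first part also parallels the paper's Lemmas~\ref{l:srbu1} and~\ref{l:srbu2}: the paper normalizes the collar $\Sigma_{n,1}$ to a standard model $\Sigma^{\sharp}_{n,1}$ by an explicit Hamiltonian flow constructed on the $n$-sheeted cover of a neighborhood of $\gamma(S^1)$ (this is where condition (iv) enters), and then runs a relative Moser argument; your ``straighten the $n$ sheets, then Moser'' plan is a reasonable sketch of the same strategy, though you should expect to need the covering-space trick or an equivalent device to write down the straightening.

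The genuine gap is the step where you conclude that the resulting standard neighborhood \emph{is} $(B_n,\lambda\omega_n)$. You justify this by saying that ``$(B_n,\omega_n)$ was constructed in section~\ref{sec:sympbncn} exactly so that its core is Lagrangian with $(n,q)=(n,1)$ data,'' but that is not how $\omega_n$ is defined there: it is induced from a Stein structure $J_n$ presented by a Legendrian link diagram in standard form (Figure~\ref{f:bnsphstein}), and no Lagrangian core inside $(B_n,\omega_n)$ is exhibited in that section. So the identification of the standard neighborhood of $\L_{n,1}^{\sharp}$ with $(B_n,\omega_n)$ is a substantive claim requiring proof, and it is exactly the content of the paper's Lemma~\ref{l:srbu3}: one views the standard neighborhood as a Stein/Weinstein $1$-handle $S^1\times D^3$ (a neighborhood of $\gamma(S^1)$) with a $2$-handle attached along the Legendrian knot $\K_{n,1}=\partial(\Sigma^{\sharp}_{n,1}-\Sigma^{\sharp,a}_{n,1})$ in $S^1\times S^2$, computes this knot explicitly in action-angle coordinates, and shows via stereographic projection and Gompf's Kirby--Legendrian moves that it is the same Legendrian knot as $K_2^n$ in the diagram defining $J_n$ (whence the two completed symplectic manifolds agree). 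Without some version of this argument your first part only produces \emph{a} standard symplectic neighborhood, not the specific $(B_n,\omega_n)$ whose contact boundary is analyzed in section~\ref{sec:auxprop}, and the hand-off to your second part does not go through.
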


\begin{proof}

The proof of the theorem will follow from Lemmas ~\ref{l:srbu1} and ~\ref{l:srbu2} below, but first we will introduce some notation.

We express $\L_{n,q} \subset (X,\omega)$ as a union: 
\begin{equation}
\label{eq:lnqsplit}
\L_{n,q} = \Sigma_{n,q} \cup \Delta ,
\end{equation}

\noindent where $\Sigma_{n,q}$ is the image of a collar neighborhood of $\partial D  \subset D$, $C_D$, and $\Delta$ is the image of the remainder $D-C_D$. First, we will present a model of $\Sigma_{n,q}$ explicitly by expressing it in terms of local coordinates. 

For $\L_{n,q}$, the respective $\gamma(S^1) \hookrightarrow (X,\omega)$, as in Definition~\ref{d:lnq}, will have a neighborhood, $S^1 \times D^3$ with standard Darboux coordinates: $(\theta,x,u,v)$ with the symplectic form $\omega = d\theta \wedge dx + du \wedge dv$, where $\theta$ is a $2\pi$-periodic coordinate on $S^1$, and $x,u,v$ are the standard coordinates on $D^3$. Parameterizing $C_D$ by $(t,s)$ with $0 \leq t < 2\pi$ and $0 \leq s \leq \epsilon$ for some small $\epsilon$, Definition~\ref{d:lnq} implies that without loss of generality, $\Sigma_{n,q}(t,s)$ can be expressed as:
\begin{equation}
\label{eq:sigmanqts}
\Sigma_{n,q}(t,s) = (nt,x(t,s),s\cos(\psi_q(t,s)),-s\sin(\psi_q(t,s)))
\end{equation}

\noindent where $x(t,s)$ and $\psi_q(t,s)$ are smooth functions with $x(0,s) = x(2\pi,s)$ and $\psi_q(2\pi,s) - \psi_q(0,s) = q(2\pi)$. We observe that at $s=0$ we have:
\begin{equation*}
\Sigma_{n,q}(t,0) = (nt,0,0,0) = \gamma(S^1)  \, . 
\end{equation*}

\noindent Thus, the numbers in the pair $(n,q)$ as they appear in (\ref{eq:sigmanqts}), are the degrees of the maps in part (iii) of Definition~\ref{d:lnq}.

Next, we switch to  somewhat more convenient coordinates $(\theta,x,\tau,\rho)$, (sometimes referred to as action-angle coordinates) where:
\begin{equation*}
\theta \rightarrow \theta, \hspace{.1in} x \rightarrow x, \hspace{.1in} u \rightarrow \sqrt{2\rho}\cos \tau, \hspace{.1in} v \rightarrow -\sqrt{2\rho}\sin \tau \, .
\end{equation*}

\noindent This coordinate change is symplectic, since the symplectic form remains the same: $\omega = d\theta \wedge dx + d\tau \wedge d\rho$. We can reparameterize $\Sigma_{n,q}$ with $(t,I)$, $0 \leq t < 2\pi$ and $0 \leq I \leq \epsilon '$, where $I = \frac{1}{2}s^2$, and so (\ref{eq:sigmanqts})  in $(\theta,x,\tau,\rho)$ coordinates becomes:
\begin{equation*}
\Sigma_{n,q}(t,I) = (nt,x(t,I),\psi_q(t,I),I) \, . 
\end{equation*}

The Lagrangian condition $\omega_{|T_{\L_{n,q}(D)}X}=0$ imposes further restrictions on $x(t,I)$, thus $\Sigma_{n,q}(t,I)$ can be given as follows:
\begin{equation}
\label{eq:sigmanqti}
\Sigma_{n,q}(t,I) = (nt,-\frac{q}{n}I\frac{\partial \psi_q}{\partial t} + \int \frac{q}{n}I\frac{\partial^2 \psi_q}{\partial I \partial t}\,dI,\psi_q(t,I),I)  \, .
\end{equation}

\noindent A particular example is when $\psi_q(t,I) = qt$, this will be called $\Sigma^{\sharp}_{n,q}$:
\begin{equation}
\label{eq:sigmastd}
\Sigma^{\sharp}_{n,q}(t,I) = (nt,-\frac{q}{n}I,qt,I) \, .
\end{equation}

\noindent Again, we refer the reader to Figure~\ref{f:L3} for an illustration of $\Sigma^{\sharp}_{n,q}$ for $n=3$ and $q=1$.

\begin{lemma}
\label{l:srbu1}
Let $\L_{n,q} \subset (X,\omega)$ be as in Definition~\ref{d:lnq}. Then there exists another $\L_{n,q}^{\sharp} \subset (X,\omega)$, also as in Definition~\ref{d:lnq}, such that if $\L_{n,q} = \Sigma_{n,q} \cup \Delta$, (as defined in (\ref{eq:lnqsplit})), then $\L_{n,q}^{\sharp} = \Sigma_{n,q}^{\sharp} \cup \Delta^{\sharp}$, where $\Sigma_{n,q}^{\sharp}$ is as in (\ref{eq:sigmastd}) and $\Delta^{\sharp}$ agrees with $\Delta$ everywhere except for a small neighborhood of its boundary. We will refer to such $\L_{n,q}^{\sharp}$s as the ``good" ones. Thus, all the ``good" $\L_{n,q}$s are the ones which are standard in a neighborhood of $\gamma(S^1)$.
\end{lemma}

\begin{lemma}
\label{l:srbu2}
Let $\L_{n,q}^{\sharp}$ and $\check{\L}_{n,q}^{\sharp}$ be both ``good" $\L_{n,q}$s, in accordance with Definition~\ref{d:lnq} and Lemma~\ref{l:srbu1}, then they will have symplectomorphic neighborhoods in $(X,\omega)$.
\end{lemma}

Note, the above Lemmas are meant to mirror the standard Weinstein Lagrangian embedding theorem. First, we will prove Lemma~\ref{l:srbu1} by constructing a Hamiltonian vector flow that will take $\Sigma_{n,q}$ to $\Sigma^{\sharp}_{n,q}$. Second, we will prove Lemma~\ref{l:srbu2} using Lemma~\ref{l:srbu1} and a relative Moser type argument.

\begin{proof}{Proof of Lemma~\ref{l:srbu1}.}
We construct a Hamiltonian $H$ with flow 
\begin{equation*}
\varphi_{\alpha}:nbhd(\widetilde{\gamma(S^1)}) \rightarrow nbhd(\widetilde{\gamma(S^1)}), 
\end{equation*}

\noindent for $0 \leq \alpha \leq 1$, where $\widetilde{\gamma(S^1)}$ is the $n$-sheeted covering space of $\gamma(S^1)$. Note, we choose $\epsilon '$ small enough such that $\widetilde{\Sigma^{\sharp}_{n,q}(t,I)} \subset nbhd(\widetilde{\gamma(S^1)})$.  $H$ and $\varphi_\alpha$ are as given in (\ref{eq:flow}) and (\ref{eq:ham}) below on $\widetilde{\Sigma^{\sharp}_{n,q}}(t,I)$ and are $0$ otherwise:
\small
\begin{equation}
\label{eq:flow}
\varphi_{\alpha}(\theta,x,\tau,\rho) = (\theta,x -(\frac{\partial f}{\partial \theta}\rho - \int \frac{\partial^2 f}{\partial \rho \partial \theta}\rho\,d\rho)\alpha,\tau + f(\theta,\rho)\alpha,\rho)
\end{equation}
\normalsize
\begin{equation}
\label{eq:ham}
H(\theta,x,\tau,\rho) = \int f(\theta,\rho) \,d\rho 
\end{equation}

\noindent for some continuous function $f$.

The following calculation shows that $\varphi_\alpha$ preserves the symplectic form $\omega = d\theta \wedge dx + d\tau \wedge d\rho$, and that it is indeed the Hamiltonian flow for the $H$ above.
\small
\begin{eqnarray*}
& & d\theta \wedge d(x -(\frac{\partial f}{\partial \theta}\rho - \int \frac{\partial^2 f}{\partial \rho \partial \theta}\rho\,d\rho)\alpha) + d(\tau + f(\theta,\rho)\alpha) \wedge d\rho   \\ 
&=& d\theta \wedge (dx - \alpha (\frac{\partial^2 f}{\partial \theta^2}\rho d\theta +  \frac{\partial f}{\partial \theta} d\rho + \frac{\partial^2 f}{\partial \rho \partial \theta} \rho d\rho - \frac{\partial}{\partial \theta}(\int \frac{\partial^2 f}{\partial \rho \partial \theta}\rho\,d\rho)  d\theta - \frac{\partial^2 f}{\partial \rho \partial \theta} \rho d\rho))   \\ 
&+& (d\tau + \alpha(\frac{\partial f}{\partial \theta} d\theta + \frac{f}{\partial \rho} d\rho)) \wedge d\rho   \\ 
&=& d\theta \wedge dx - \alpha \frac{\partial f}{\partial \theta} d\theta \wedge d\rho + d\tau \wedge d\rho + \alpha \frac{\partial f}{\partial \theta} d\theta \wedge d\rho   \\ 
&=& d\theta \wedge dx + d\tau \wedge d\rho \, .  
\end{eqnarray*}
\normalsize

\noindent Also,
\small
\begin{equation*}
\frac{d}{d\alpha}\varphi_\alpha = (0, - \frac{\partial f}{\partial \theta}\rho + \int \frac{\partial^2 f}{\partial \rho \partial \theta}\rho\,d\rho, f(\theta,\rho), 0) = (\frac{\partial H}{\partial x},- \frac{\partial H}{\partial \theta},\frac{\partial H}{\partial \rho},-\frac{\partial H}{\partial \tau}) \, . 
\end{equation*}
\normalsize

If we let $p_n : nbhd(\widetilde{\gamma(S^1)}) \rightarrow nbhd(\gamma(S^1))$ be the $(n:1)$ covering map, then we have $p_n \circ \varphi_1(\widetilde{\Sigma^{\sharp}_{n,q}}) = \Sigma_{n,q}$, taking $f(nt,I) = \psi_q(t,I) - qt$, as seen in the equation below:
\small
\begin{eqnarray*}
p_n \circ \varphi_1(\widetilde{\Sigma^{\sharp}_{n,q}})(t,I) &=& (nt,-\frac{q}{n}I -\frac{\partial f(nt,I)}{\partial(nt)}I + \int \frac{\partial^2 f(nt,I)}{\partial I \partial (nt)}I\,dI,t + f(nt,I),I)  \\ 
&=& (nt,-\frac{q}{n}I\frac{\partial \psi_q}{\partial t} + \int \frac{q}{n}I\frac{\partial^2 \psi_q}{\partial I \partial t}\,dI,\psi_q(t,I),I)  \\
&=& \Sigma_{n,q}(t,I) \, . 
\end{eqnarray*}
\normalsize

Note, in order for $p_n \circ \varphi_\alpha (\widetilde{\Sigma^{\sharp}_{n,q}})$ to remain being a ``$\Sigma_{n,q}$" for all $0 \leq \alpha \leq 1$, (and not ``tear" as $\alpha$ goes from $0$ to $1$), we must have 
\begin{equation*}
\left[q(2\pi) +(\psi_q(2\pi,I) - q(2\pi)\alpha\right]- [q(0) + (\psi_q(I,0) - q(0))\alpha] 
\end{equation*}

\noindent be an integer multiple of $2\pi$ for all $0 \leq \alpha \leq 1$. This implies:
\begin{equation*}
\psi_q(2\pi,I) - \psi_q(0,I) = q(2\pi) \, . 
\end{equation*} 

\noindent Which is precisely the condition that $\psi_q(t,I)$ needs to have in the definition of $\Sigma_{n,q}(t,I)$. Hence, whenever we have  $\L_{n,q} \subset (X,\omega)$, we can always find a ``good" $\L_{n,q}^{\sharp} \subset (X,\omega)$, which looks ``standard" near $\gamma(S^1)$, by the map $p_n \circ \varphi_1^{-1}(\widetilde{\Sigma_{n,q}}) = \Sigma^{\sharp}_{n,q}$, with $\L_{n,q}^{\sharp} = \Sigma_{n,q}^{\sharp} \cup \Delta^{\sharp}$. (We have $\Delta^{\sharp}$, since the map
$p_n \circ \varphi_1^{-1}$ gets smoothed off near $\partial \Delta$.) \end{proof}

\begin{proof}{Proof of Lemma~\ref{l:srbu2}.} In order to prove this lemma, we will be using the relative Moser's theorem, stated below:

\begin{lemma}{\textbf{Relative Moser's Theorem.}}
\label{l:moser}
\cite{EliM} Let $\omega_t$ be a family of symplectic forms on a compact manifold $W$ with full-dimensional submanifold $W_1$, such that $\omega_t = \omega_0$ over an open neighborhood of $W_1$ and the relative cohomology class $\left[\omega_t - \omega_0\right] \in H^2(W,W_1)$ vanishes for all $t \in \left[0,1\right]$. Then there exists an isotopy $\Phi_t:W \rightarrow W$ which is fixed on an open neighborhood of $W_1$ and such that $\Phi_t^*(\omega_0) = \omega_t$, $t \in \left[0,1\right]$.
\end{lemma}

(Note, in \cite{EliM} this thereom is stated for the pair $(W,\partial W)$, however, the proof directly extends to the pair $(W,W_1)$.)

Let $\L_{n,q}^{\sharp}$ be a ``good" $\L_{n,q}$ immersed disk, and let $\L_{n,q}^{0,\sharp} \hookrightarrow (X_0,\omega_0)$ be some particular ``good" $\L_{n,q}$ immersed disk in a symplectic 4-manifold $(X_0,\omega_0)$. Let $\Sigma^{\sharp, \delta}_{n,q}(t,I) \subset \Sigma^{\sharp}_{n,q}(t,I)$ be such that $0 \leq t < 2\pi$ and $\delta \leq I < \epsilon '$. Then, we let
\begin{eqnarray*}
\L_{n,q}^{\sharp, \delta} &=& \Sigma^{\sharp, \delta}_{n,q}(t,I) \cup \Delta^{\sharp}  \\
\L_{n,q}^{0, \sharp, \delta} &=& \Sigma^{\sharp, \delta}_{n,q}(t,I) \cup \stackrel{\circ}{\Delta}^{\sharp} \, . 
\end{eqnarray*}

\begin{figure}[ht!]
\labellist
\small\hair 2pt
\pinlabel $\O_{\L_{n,q}^{0,\sharp, \delta}}$ at 220 113
\pinlabel $\L_{n,q}^{0,\sharp, \delta}$ at 300 87
\pinlabel $\O_{\Sigma_{n,q}^{\sharp, \delta}}$ at 67 87
\pinlabel $\Sigma_{n,q}^{\sharp, \delta}$ at 110 87
\pinlabel $nbhd(\gamma(S^1))$ at 85 35
\endlabellist
\centering
\includegraphics[height=70mm, width=135mm]{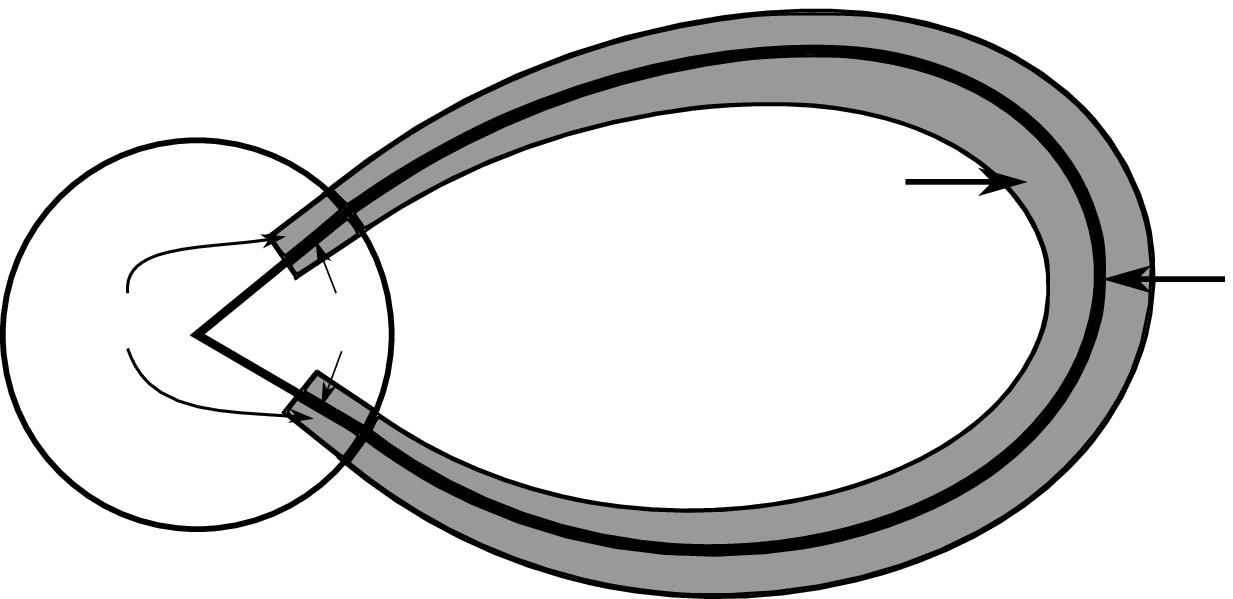}
\caption{\bf Schematic diagram of $\O_{\L_{n,q}^{0,\sharp, \delta}}$}
\label{f:lcore}
\end{figure}

\noindent Also, let $\nu(X,\L_{n,q}^{\sharp, \delta})$ and $\nu(X_0,\L_{n,q}^{0, \sharp, \delta})$ be normal bundles of $\L_{n,q}^{\sharp, \delta}$ and $\L_{n,q}^{0, \sharp, \delta}$ respectively. We also denote 
\begin{eqnarray*}
N_{\Sigma_{n,q}^{\sharp, \delta}} \subset N_{\L_{n,q}^{\sharp, \delta}} &\subset& \nu(X,\L_{n,q}^{\sharp, \delta})  \\
\O_{\Sigma_{n,q}^{\sharp, \delta}} \subset \O_{\L_{n,q}^{0,\sharp, \delta}} &\subset& \nu(X_0,\L_{n,q}^{0,\sharp, \delta}) 
\end{eqnarray*}

\noindent to be the neighborhoods of $\Sigma_{n,q}^{\sharp, \delta}$, $\L_{n,q}^{\sharp, \delta}$ and $\L_{n,q}^{0,\sharp, \delta}$ in their respective normal bundles. Refer to Figure~\ref{f:lcore} for a schematic diagram. We construct a bundle map:
\begin{equation*}
B_0: T_x(\nu(X_0,\L_{n,q}^{0,\sharp, \delta})) \longrightarrow T_y(\nu(X,\L_{n,q}^{\sharp, \delta}))  
\end{equation*}

\noindent for $x \in \L_{n,q}^{0,\sharp, \delta}$ and $y \in \L_{n,q}^{\sharp, \delta}$ such that ${B_0}|_{\Sigma_{n,q}^{\sharp, \delta}} = Id$. By  the Whitney Extension theorem \cite{Wh}, we have a map
\begin{equation*} 
\phi_0: \O_{\L_{n,q}^{0,\sharp, \delta}} \rightarrow N_{\L_{n,q}^{\sharp, \delta}}  
\end{equation*}
 
\noindent with $\phi = B_0$ on $T_{\L_{n,q}^{0,\sharp, \delta}}(\nu(X_0,\L_{n,q}^{0,\sharp, \delta}))$ and $\phi_0^\ast(\omega) = \omega_0$ on $\O_{\Sigma_{n,q}^{\sharp, \delta}}$.

Next, we define a family of symplectic forms:
\begin{equation*}
\omega_t = (1-t)\omega_0 + t\phi_0^{\ast}(\omega) \,\, \text{for} \,\, t \in [0,1] \, . 
\end{equation*}

\noindent We get $\omega_t - \omega_0 = t(\phi_0^{\ast}(\omega) - \omega_0) = 0$, for all $t \in [0,1]$ on some open neighborhood of $\O_{\Sigma_{n,q}^{\sharp, \delta}}$. We can do this by making our $\epsilon '$ a bit smaller. Moreover, we can pass down to the relative homology class:
\begin{equation*}
[\omega_t - \omega_0] \equiv [t(\phi_o^{\ast}(\omega) - \omega_0)] \in H^2(\O_{\L_{n,q}^{0,\sharp, \delta}},\O_{\Sigma_{n,q}^{\sharp, \delta}}) \, .  
\end{equation*}

\noindent This relative class $[\omega_t - \omega_0]$ will vanish since $\phi_0^\ast(\omega) = \omega_0$ on $\O_{\Sigma_{n,q}^{\sharp, \delta}}$. Thus, we can use relative Moser's theorem (Lemma~\ref{l:moser}), with $W=\O_{\L_{n,q}^{0,\sharp, \delta}}$ and $W_1 = \O_{\Sigma_{n,q}^{\sharp, \delta}}$, and we get an isotopy $\Phi_t : \O_{\L_{n,q}^{0,\sharp, \delta}} \rightarrow \O_{\L_{n,q}^{0,\sharp, \delta}}$ such that $\Phi_1^{\ast}(\omega_0) = \omega_1 = \phi_0^\ast(\omega)$. We define the map $\Phi_{\sharp} = \phi_0 \circ \Phi_1^{-1}$, and obtain:
\begin{equation*}
\Phi_{\sharp}: \O_{\L_{n,q}^{0,\sharp, \delta}} \rightarrow N_{\L_{n,q}^{\sharp, \delta}} \,\,\, \text{with} \,\,\, \Phi_{\sharp}^{\ast}(\omega) = \omega_0 \, .  
\end{equation*}

Likewise, we can obtain a symplectomorphism $\check{\Phi}_{\sharp}: N_{\check{\L}_{n,q}^{\sharp, \delta}} \rightarrow \O_{\L_{n,q}^{0,\sharp, \delta}}$. By composing $\Phi_{\sharp}$ and $\check{\Phi}_{\sharp}$, we get a symplectomorphism:
\begin{equation*}
\Phi : N_{\check{\L}_{n,q}^{\sharp, \delta}} \rightarrow N_{\L_{n,q}^{\sharp, \delta}}  \, ,
\end{equation*}

\noindent which extends to map between $\check{\L}_{n,q}^{\sharp}$ and $\L_{n,q}^{\sharp}$, since they are both ``good" immersed disks, and are the same on $\Sigma_{n,q}^{\sharp}$.

Now to complete the proof of Lemma~\ref{l:srbu2}, we will construct a particular model of a neighborhood of such an immersed Lagrangian disk $\L_{n,q}^{0, \sharp} = \Sigma^{\sharp}_{n,q}(t,I) \cup \stackrel{\circ}{\Delta}$. We will do this by symplectically gluing $N_{\Sigma_{n,q}^{\sharp}}$ to $N_B \subset T^{\ast}(B)$, where $T^{\ast}(B)$ is just the cotangent space of a 2-disk $B$, and $N_B$ is its neighborhood in $T^{\ast}(B)$. With the identification of $\Sigma_{n,q}^{\sharp,\delta}$ with $C_B$, a collar neighborhood of the boundary of disk $B$, we can construct a symplectomorphism $\Psi$ between $N_{\Sigma_{n,q}^{\sharp,\delta}} \subset \nu(X,\L_{n,q}^{\sharp, \delta})$ and $N_{C_B} \subset T^{\ast}(B)$, by a similar Moser type argument as used above. We then symplectically glue $N_{\Sigma_{n,q}^{\sharp}}$ to $N_B$ via $\Psi$. \end{proof}

\subsection{Showing $(nbhd\,\L_{n,1}^{\sharp}) \cong (B_n,\omega_n)$.}
Now that we have shown that a neighborhood of a ``good" Lagrangian core $\L_{n,1}^{\sharp}$ is entirely standard, we will now show that this standard neighborhood is in fact equivalent to $(B_n,\omega_n)$ for each $n \geq 2$,  where $\omega_n$ are the symplectic forms induced on the rational homology balls $B_n$ by the Stein structures $J_n$, in section~\ref{sec:sympbncn}. Note, there is a choice in the size of a neighborhood of $\L_{n,1}^{\sharp}$ which corresponds to the choice of the symplectic volume of the rational homology ball $B_n$; this is the source of the non-uniqueness of the symplectic rational blow-up operation, as mentioned in section~\ref{sec:srbuintro}.

\begin{lemma}
\label{l:srbu3}
There exists a neighborhood of $\L_{n,1}^{\sharp}$ in $(X,\omega)$, $N({\L_{n,1}^{\sharp}})$, such that there exists a symplectomorphism 
\begin{equation}
\label{eq:sympplus}
f: (N(\L_{n,1}^{\sharp}), \omega |_{N(\L_{n,1}^{\sharp})})^+ \rightarrow  (B_n,\omega_n)^+
\end{equation}

\noindent where $(N(\L_{n,1}^{\sharp}), \omega |_{N(\L_{n,1}^{\sharp})})^+$ and $(B_n,\omega_n)^+$ are the symplectic completions (see for example \cite{OzSt}) of $(N(\L_{n,1}^{\sharp}),$ $\omega |_{N(\L_{n,1}^{\sharp})})$ and $(B_n,\omega_n)$ respectively. 
\end{lemma}

\begin{proof}

Recall that the ``good" Lagrangian cores $\L_{n,1}^{\sharp}$ can be expressed as a union $\L_{n,1}^{\sharp} = \Sigma_{n,1}^{\sharp}(t,I) \cup \Delta^{\sharp}$, and that $\Sigma^{\sharp, \delta}_{n,q}(t,I) \subset \Sigma^{\sharp}_{n,q}(t,I)$ is such that $0 \leq t < 2\pi$ and $\delta \leq I < \epsilon '$. We fix a number $0 < a < \epsilon'$ and let:
\begin{equation}
\label{eq:knotkn1}
\partial (\Sigma_{n,1}^{\sharp} - \Sigma_{n,1}^{\sharp, a}) = \K_{n,1}  
\end{equation}

\noindent where $\K_{n,1}$ is a knot in $\partial(S^1 \times D^3) \cong S^1 \times S^2$, and the spheres $S^2$ have radius $a$. The knot $\K_{n,1}$ can be described with respect to the $(\theta, x, \tau, \rho)$ coordinates, introduced in section~\ref{sec:srbuthm}, as follows:
\begin{equation}
\K_{n,1}(t) = (nt, -\frac{a}{n}, t, a)  \, . 
\end{equation}

We observe that $\K_{n,1}$ is a Legendrian knot with respect to the standard (tight) contact structure on $S^1 \times S^2$, which has the contact 1-form
\begin{equation}
\alpha = -x d\theta - \rho d\tau
\end{equation}

\noindent with the restriction to the spheres $x^2 + 2\rho = a^2$.

In light of Eliashberg's classification of Stein handlebodies \cite{Eliash} and Go\-mpf's Kirby-Legendrian moves \cite{Gompf1}, in order to show that a neighborhood of $\L_{n,1}^{\sharp}$ is the same symplectic manifold as $(B_n,\omega_n)$, then all we have to show is that $\K_{n,1}$ in (\ref{eq:knotkn1}) is the same Legendrian knot as $K_2^n$ in Figure~\ref{f:bnsphstein}. We will show this by presenting the knot $\K_{n,1}$ in $S^1\times S^2$ in an alternate way, and showing that this is equivalent to the presentation of the knot $K_2^n$ in \textit{standard form} as in Figure~\ref{f:bnsphstein}.

In (\cite{Gompf1}, section 2) Gompf presents an alternate way of presenting a knot in $S^1 \times S^2$, we recreate this method here. We want to pull back the contact 1-form $\alpha = -x d\theta - \rho d\tau$ to $\R^3$ using cylindrical coordinates $(\theta, r, \varpi)$, by stereographically projecting all of the spheres $S^2$, (with radius $a$), in $S^1 \times S^2$. Thus, when we perform the stereographic projections, we switch from coordinate system $(\theta, x, \tau, \rho)$ to $(\theta, r, \varpi)$, such that:
\begin{eqnarray*}
\theta & = & \theta  \\
     x & = & \frac{a(r^2 - 1)}{r^2 +1}  \\
  \tau & = & -\varpi  \\
  \rho & = & \frac{2a^2r^2}{(r+1)^2}   \, .
\end{eqnarray*}

Consequently, the contact 1-form $\alpha = -x d\theta - \rho d\tau$ restricted to the spheres $x^2 + 2\rho = a^2$, becomes the following contact 1-form on $S^1 \times (S^2 - \left\{poles\right\})$:
\begin{equation*}
\tilde{\alpha} = d\varpi + \frac{1-r^4}{2ar^2}d\theta \, ,  
\end{equation*}

\noindent which after rescaling pulls back to standard contact 1-form on $\R^3$,  
\begin{equation*}
\alpha_{std} = dZ + XdY  
\end{equation*}

\noindent (with the $Z$ coordinate being $2\pi$-periodic). As a result, we can present knots in $S^1 \times S^2$ by their standard \textit{front} projections into the $Y$-$Z$ plane, i.e. by projecting them to the $\theta$-$\varpi$ ``plane" $\R^2 / 2\pi \Z^2$. Thus, one can alternately present knots in $S^1 \times S^2$ by disconnected arcs in a square, corresponding to $\R^2 / 2\pi \Z^2$.

Now we will present the knot $\K_{n,1}$, using this alternate presentation. First, we transfer the knot $\K_{n,1}$ into $(\theta, r, \varpi)$ coordinates,
\begin{equation}
\tilde{\K}_{n,1} = (nt, C_{a,n}, -t) \, ,
\end{equation}  

\noindent where $C_{a,n}$ is just a constant depending on $a$ and $n$. Figure~\ref{f:Kn1} depicts the \textit{front} projection of $\tilde{\K}_{n,1}$ onto the $\theta$-$\varpi$ plane, (after we shift it in the $\theta$-coordinate, and take $-\frac{\pi}{2} \leq t \leq \frac{3\pi}{2}$). We then perform Gompf's move 6 (see \cite{Gompf1}, Figure 11), which in effect swings the knot around the 1-handle, and we obtain the knot as seen in Figure~\ref{f:Kn1cusp}, which is isotopic to the knot $K_2^n$ in \textit{standard form} in Figure~\ref{f:bnsphstein}.

\begin{figure}[ht]
\begin{minipage}[b]{0.45\linewidth}
\centering
\includegraphics[scale=0.5]{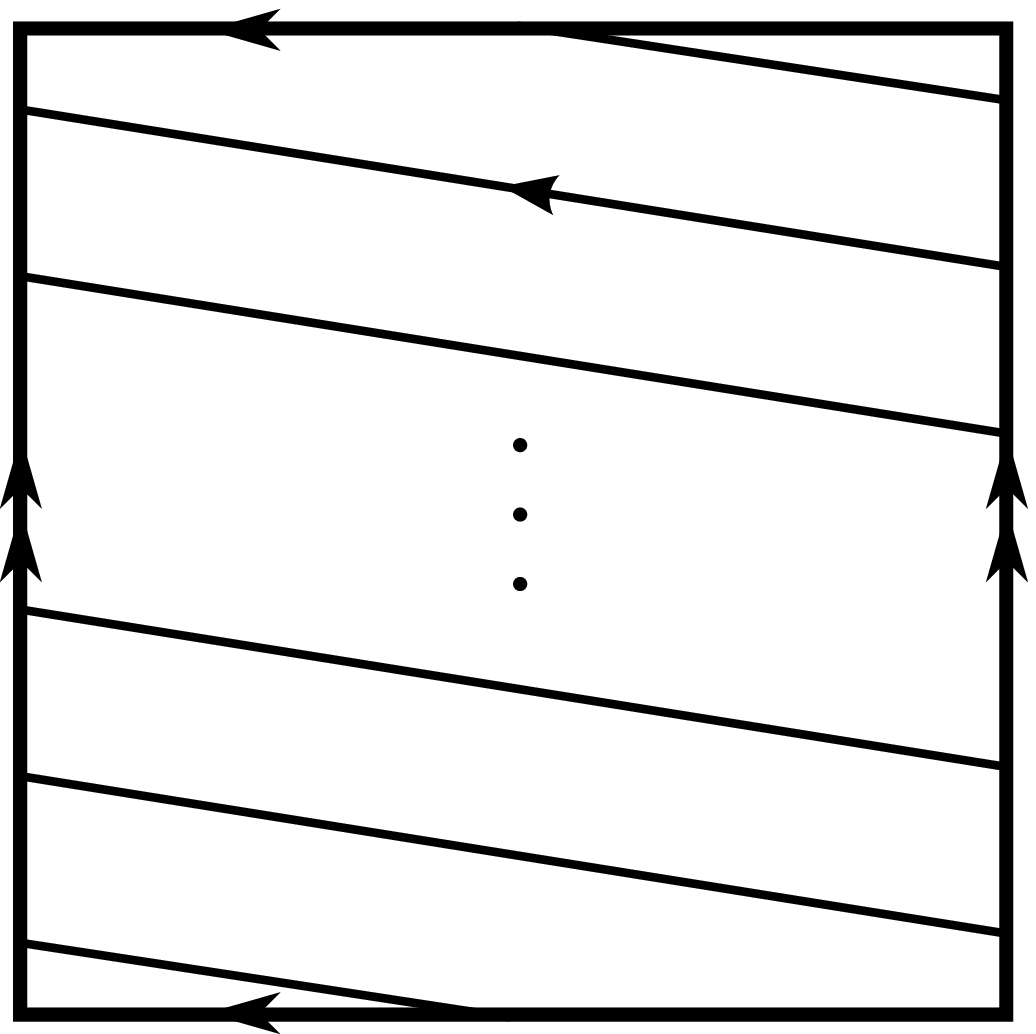}
\caption{ }
\labellist
\small\hair 2pt
\pinlabel $\}n$ at 180 150
\pinlabel $\}n$ at -230 150
\endlabellist
\label{f:Kn1}
\end{minipage}
\hspace{0.5cm}
\begin{minipage}[b]{0.45\linewidth}
\centering
\includegraphics[scale=0.545]{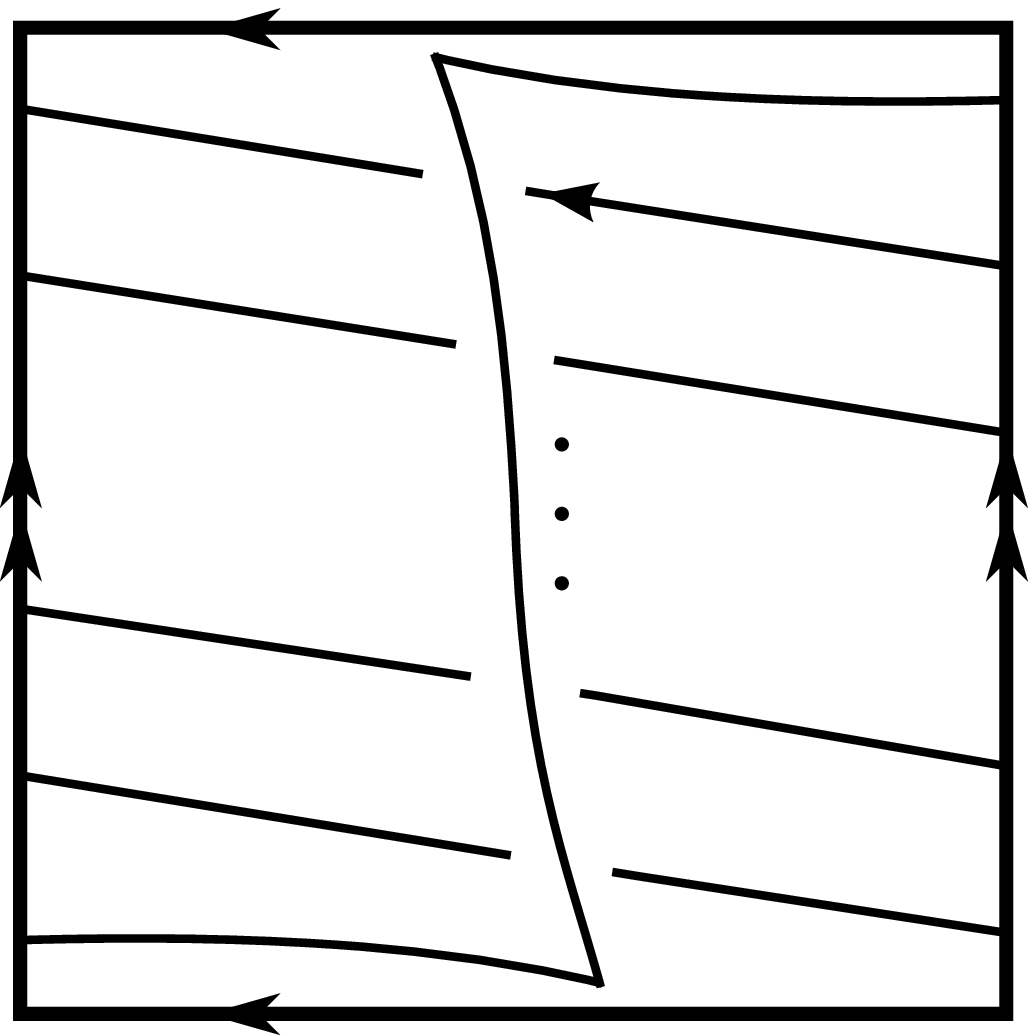}
\caption{ }
\label{f:Kn1cusp}
\end{minipage}
\end{figure}

\begin{rmk}
To see how to compute the classical Legendrian knot invariants from a diagram like in Figure~\ref{f:Kn1}, we describe what happens to the rotation number. For a Legendrian knot $K$ in a contact 3-manifold, and $v$ a nonvanishing vector field in the contact planes, one can define the rotation number $rot_v(K) = rot(K)$, as the signed number of times the tangent vector field of $K$ rotates, relative to $v$, in the contact planes \cite{Gompf1}. This number is independent of the choice of the nonvanishing vector field $v$. In the presentations of knots in $S^1 \times S^2$, by their front projections in $\R^2 / 2\pi \Z^2$ (and knots in \text{standard form}), we can choose $v$ to be $\frac{\partial}{\partial X}$ inside the square (or box). This corresponds to computing $rot(K)$ with counting cusps, as in (\ref{eq:rot}). However, when we extend the vector field $\frac{\partial}{\partial X}$ to a nonvanishing vector field on all of $S^1 \times S^2$, then the latter vector field will make a $360^{\circ}$ twist going from the top edge of the square, $\R^2 / 2\pi \Z^2$, to the bottom. Consequently, one can compute the rotation number of a Legendrian knot in $\R^2 / 2\pi \Z^2$ by counting the cusps as in equation (\ref{eq:rot}) and adding to that $\pm$ the number of times the knot crosses over from the top to the bottom edge of the square.
\end{rmk}

As a result, both $(N({\L_{n,1}^{\sharp}}), \omega |_{N(\L_{n,1}^{\sharp})})$ and $(B_n,\omega_n)$ can be represented by the same Kirby-Stein diagram, i.e. Figure~\ref{f:bnsphstein}. Thus, there exists a symplectomorphism between the symplectic completions of these two manifolds. \end{proof}

Lemma~\ref{l:srbu3} implies that for a small enough $\lambda$, ($\lambda << 1$), we can find a symplectomorphic copy of $(B_n,\omega_n)$ in $(X,\omega)$ as follows: let $\iota$ be the identification of the copy of $(N(\L_{n,1}^{\sharp}), \omega |_{N(\L_{n,1}^{\sharp})})$ in $(N(\L_{n,1}^{\sharp}), \omega |_{N(\L_{n,1}^{\sharp})})^+$ to the copy of $(N(\L_{n,1}^{\sharp}), \omega |_{N(\L_{n,1}^{\sharp})})$ in $(X, \omega)$, then we have an embedding:
\begin{equation}
\iota \circ f^{-1}(B_n, \lambda \omega_n) \hookrightarrow (X,\omega)
\end{equation}

\noindent where $f$ is the symplectomorphism in (\ref{eq:sympplus}). As a consequence, combining the results of Lemmas~\ref{l:srbu1},~\ref{l:srbu2} and~\ref{l:srbu3}, we have shown that for each $n \geq 2$, if there exists a Lagrangian core $\L_{n,1} \subset (X,\omega)$, then for a small enough $\lambda$, there exists an embedding of the rational homology ball: $(B_n,\lambda \omega_n) \hookrightarrow (X,\omega)$; hence proving the first part of Theorem~\ref{thm:srbu}. Note, as stated before, just like the symlectic blow-up, the symplectic rational blow-up operation is unique up to the choice of volume of the rational homology ball $B_n$, i.e. the choice of a $\lambda$ that works for this construction. 

\subsection{Gluing argument using the contact manifolds on the boundaries}

In the final step of our proof of Theorem~\ref{thm:srbu}, we will show using Proposition~\ref{p:iso3} (proved in section~\ref{sec:auxprop}), that we can symplectically rationally blow-up $(X,\omega)$ by removing $(B_n,\lambda_0 \omega_n)$ and replacing it with $(C_n,\mu \omega_n')$, for some $\lambda_0 < \lambda$ and $\mu > 0$.

\begin{prop}
\label{p:iso3}
Let $(\partial B_n, \xi) = \partial (B_n,\omega_n)$ and $(\partial C_n,\xi') = \partial (C_n, \omega_n')$ be contact manifolds and let $\xi_{std}$ be the standard contact structure on $(L(n^2,n-1)$. We have $(\partial B_n, \xi)$ $\cong (L(n^2,n-1), \xi_{std})$ $\cong (\partial C_n, \xi')$ as contact 3-manifolds. In particular, this implies that $(B_n,\omega_n)$ is a symplectic filling of $(L(n^2,n-1),\xi_{std})$.
\end{prop} 

\begin{figure}[ht!]
\labellist
\small\hair 2pt
\pinlabel $(B_n,\omega_n)^+$ at 85 224
\pinlabel $h(\A)$ at 135 67
\pinlabel $(C_n,\omega_n')^+$ at 225 222
\pinlabel $\A$ at 130 143
\pinlabel $CN(\partial (C_n,\mu\omega_n'))$ at 300 65
\pinlabel $g$ at 152 97
\pinlabel $h$ at 85 85
\endlabellist
\centering
\includegraphics[height=75mm, width=125mm]{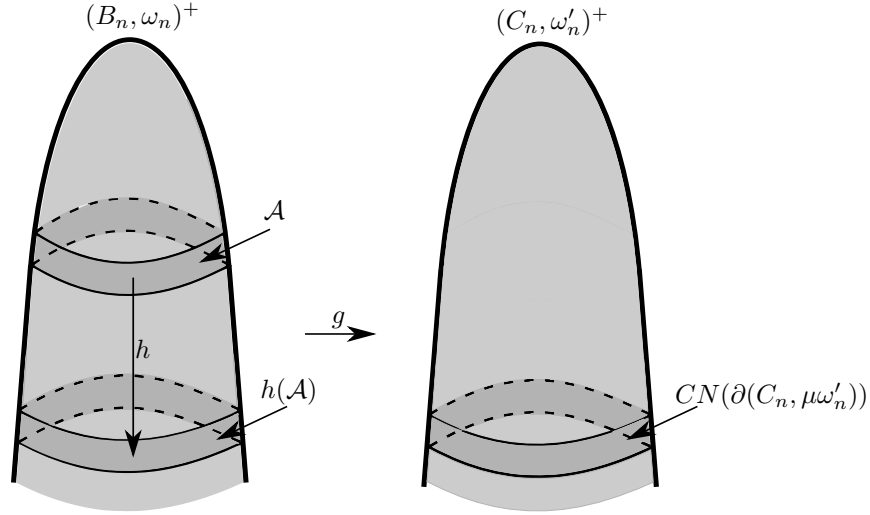}
\caption{Symplectic completions of $(B_n,\omega_n)$ and $(C_n,\omega_n')$}
\label{f:bncnplus}
\end{figure}

We start by assuming that we have $\L_{n,1} \subset (X,\omega)$, implying that we can find an embedding  $(B_n,\lambda \omega_n) \hookrightarrow (X,\omega)$. According to Proposition~\ref{p:iso3}, $\partial (B_n,\omega_n) \cong (L(n^2,n-1), \xi_{std}) \cong \partial (C_n, \omega_n')$, thus for some high enough $t$ we will have a  symplectomorphism:
\begin{equation}
g: [t, \oo) \times \partial(B_n,\omega_n) \rightarrow [t, \oo) \times \partial(C_n,\omega_n')
\end{equation} 
\noindent such that
\begin{eqnarray*}
\left[t, \oo \right) \times \partial(B_n,\omega_n) &\subset& (B_n, \omega_n)^+  \\
\left[t, \oo\right) \times \partial(C_n,\omega_n') &\subset& (C_n, \omega_n')^+ 
\end{eqnarray*} 

\noindent where $(B_n, \omega_n)^+$ and $(C_n, \omega_n')^+$ are the symplectic completions of $(B_n, \omega_n)$ and $(C_n,$ $\omega_n')$ respectively. We take the embedding $(B_n,\lambda \omega_n) \hookrightarrow (X,\omega)$, and consider its image $f \circ \iota^{-1}(B_n,\lambda \omega_n)$ back in $(B_n, \omega_n)^+$. Likewise, for $\lambda_0 < \lambda$, we can consider the image of $f \circ \iota^{-1}(B_n,\lambda_0 \omega_n)$ in $(B_n, \omega_n)^+$. We define the $\A \subset (B_n, \omega_n)^+$ to be: 
\begin{equation}
\A = (f \circ \iota^{-1}(B_n,\lambda \omega_n)) - (f \circ \iota^{-1}(B_n,\lambda_0 \omega_n)) 
\end{equation}

\noindent so that $\A$ is a collar neighborhood of the boundary of $f \circ \iota^{-1}(B_n,\lambda \omega_n)$.

\begin{figure}[ht!]
\labellist
\small\hair 2pt
\pinlabel $\L_{n,1}$ at 268 190
\pinlabel $(B_n,\lambda_0\omega_n)$ at 250 245
\pinlabel $(X,\omega)$ at 75 152
\pinlabel $(X,\omega)-(B_n,\lambda_0\omega_n)$ at 70 27
\pinlabel $(C_n,\mu\omega_n')$ at 365 27
\pinlabel $\A$ at 212 138
\pinlabel $CN(\partial (C_n,\mu\omega_n'))$ at 300 130
\pinlabel $\phi$ at 255 50
\endlabellist
\centering
\includegraphics[height=95mm, width=125mm]{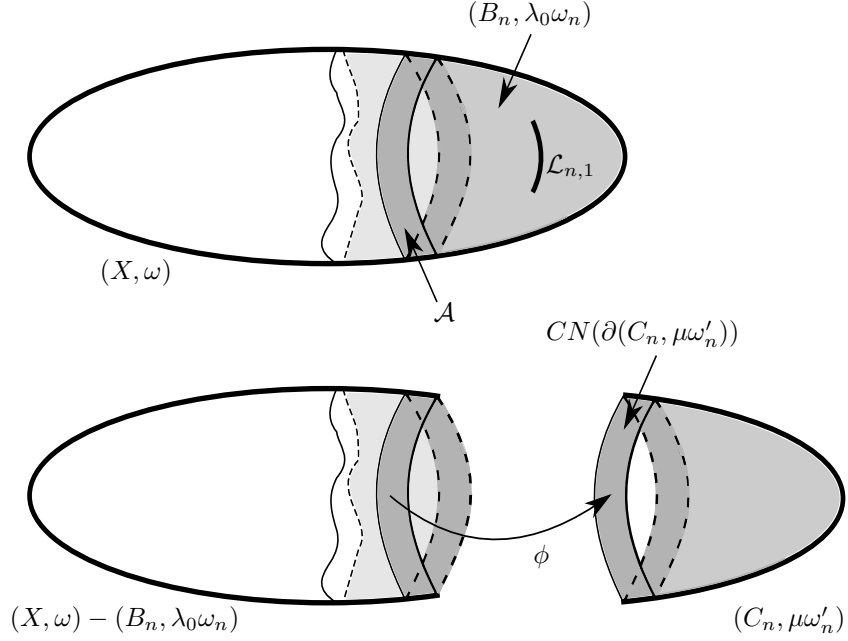}
\caption{Construction on $(X',\omega')$}
\label{f:sympglue}
\end{figure}

We let $h$ be the symplectomorphism corresponding to a radial vector field flow in $(B_n, \omega_n)^+$, then we can find a $\mu > 0$ such that $\A \subset (B_n, \omega_n)^+$ is symplectomorphic to $g \circ h (\A) \cong CN(\partial (C_n,\mu \omega_n')) \subset (C_n, \omega_n')^+$, where $CN(\partial (C_n,\mu \omega_n'))$ denotes a collar neighborhood of $\partial (C_n,\mu \omega_n')$ in $(C_n, \omega_n')^+$ (see Figure~\ref{f:bncnplus}).

Finally, we are ready to construct the \textit{symplectic rational blow-up} $(X',\omega')$ of $(X,\omega)$ (see Figure~\ref{f:sympglue}). We let:
\begin{equation}
\label{eq:srbueq}
(X',\omega') = ((X,\omega) - (B_n,\lambda_0 \omega_n)) \cup_{\phi} (C_n,\mu \omega_n')
\end{equation}

\noindent where $\phi$ is the symplectic map:
\begin{equation*}
\phi : \iota \circ f^{-1}(\A) \rightarrow CN(\partial (C_n,\mu \omega_n')). 
\end{equation*}  \end{proof}

It is worthwhile to note, that given the definition of the \textit{symplectic rational blow-up}, one can ask the following symplectic capacity question: Given $\lambda_0$, what is the upper bound on $\mu$ such that the construction in (\ref{eq:srbueq}) works?

\section{Proof of Proposition~\ref{p:iso3}}
\label{sec:auxprop} 

In this section we will prove Proposition~\ref{p:iso3} using computations of Go\-mpf's invariant introduced in section~\ref{sec:kscalc}. We compute Gompf's $\Gamma$ invariant for $(L(n^2,n-1), \xi_{std})$, (section~\ref{sec:compgammalens}), $\partial (B_n, J_n)$, (section~\ref{sec:gammabn}), and for $\partial (C_n,J_n')$, (section~\ref{sec:gammacn}). Note, by the standard contact structure $\xi_{std}$ on $(L(n^2,n-1)$, we mean the contact structure that descends to $L(n^2,n-1)$ from the standard contact structure on $S^3$, via the identification $L(n^2,n-1) = S^3/G_{n^2,n-1}$, where $G_{n^2,n-1}$ is the subgroup
\begin{equation*}
G_{n^2,n-1} = \left\{ \left(\begin{array}{cc} \zeta & 0 \\ 0 & \zeta^{n-1}\end{array}\right) | \zeta^{n^2} = 1 \right\} \subset U(2) \, .  
\end{equation*} 

\vspace{.05in}

\subsection{Computations of $\Gamma$ for $(L(n^2, n-1), \xi_{std})$} 
\label{sec:compgammalens}

In 2006, Lisca \cite{Lisca} classified all the symplectic fillings of $(L(p,q), \xi_{std})$ up to diffeomorphisms and blow-ups. In order to show that the boundaries of the symplectic 4-manifolds he constructed are the lens spaces with the standard contact structure $(L(p,q), \xi_{std})$, he computed the Gompf invariant $\Gamma$ of $(L(p,q), \xi_{std})$ by expressing the contact manifold as the link of a cyclic quotient singularity. We will use his calculations, in the case of $p=n^2$ and $q=n-1$, to match up to our own calculations of $\Gamma$ for $\partial(B_n,\omega_n)$ and $\partial(C_n, \omega'_n)$.

As mentioned above, $(L(n^2, n-1), \xi_{std})$ can be expressed as a link of a cyclic quotient singularity. There is a canonical resolution of this singularity with an exceptional divisor, with a neighborhood $R_{n^2,n-1}$. Let $l_1 \cup l_2$ be the union of two distinct complex lines in $\mathbb{C}P^2$. After successive blow-ups, we can obtain a string $C$ of rational curves in $\mathbb{C}P^2 \# (n+1)\overline{\C P^2}$ of type $(1,-1, -2, \ldots, -2,-n)$ (with $(n-1)$ of $-2$'s), with $\nu(C)$ a regular neighborhood of $C$. It is shown in (\cite{Lisca}, section 6) that there is a natural orientation preserving diffeomorphism from the complement of $\nu(C)$ to $R_{n^2,n-1}$. The boundary of $\nu(C)$ is an oriented 3-manifold which can be given by a surgery presentation of unknots $U_0$, $\ldots$, $U_{n+1}$ (Figure~\ref{f:nuc}), where $\nu_0, \ldots, \nu_{n+1}$ are the generators of $H_1(\partial \nu(C);\mathbb{Z})$. If the unknot $U_0$ is blown down, we have a natural identification 
\begin{equation}
\nu(C) = -L(n^2,n-1) =L(n^2,n^2-(n-1))
\end{equation}
since $[-2,-2, \ldots, -2, -n]$, with $n$ amount of $(-2)$s, is the continued fraction expansion of $\frac{n^2}{n^2-(n-1)}$. 

\begin{figure}[ht!]
\labellist
\small\hair 2pt
\pinlabel $1$ at 33 4.7
\pinlabel $-1$ at 60 4.7
\pinlabel $-2$ at 90 4.7
\pinlabel $-2$ at 173 4.7
\pinlabel $-n$ at 203 4.7
\pinlabel $U_0$ at 32 2.5 
\pinlabel $U_1$ at 62 2.5 
\pinlabel $U_2$ at 92 2.5 
\pinlabel $U_{n}$ at 175 2.5 
\pinlabel $U_{n+1}$ at 205 2.5 
\endlabellist
\centering
\includegraphics[height=30mm, width=120mm]{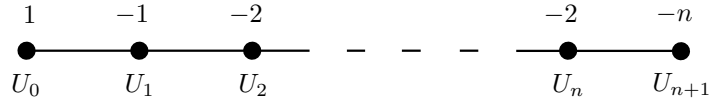}
\caption{{\bf Surgery diagram of $\partial \nu(C)$}}
\label{f:nuc}
\end{figure}

The relations of $\nu_0, \ldots, \nu_{n+1}$ in $H_1(\partial \nu(C);\mathbb{Z})$ are as follows:

\[
\left.\begin{array}{l}
    \nu_0 + \nu_1 = 0 \\ \nu_0 - \nu_1 + \nu_2 = 0 \\ \nu_1 - 2\nu_2 + \nu_3 = 0 \\ \nu_2 - 2\nu_3 + \nu_4 = 0 \\ \vdots \\ \nu_{n-1} - 2\nu_{n} + \nu_{n+1} = 0 \\ \nu_{n} - n\nu_{n+1} = 0 
\end{array}\right\} 
\implies 
\left.\begin{array}{l}
    \nu_0 = -\nu_1 \\ \nu_2 = 2\nu_1 \\ \nu_3 = 3\nu_1 \\ \nu_4 = 4\nu_1 \\ \vdots \\ \nu_{n+1} = (n+1)\nu_1 \\ (n^2)\nu_1 = 0 \, .
\end{array}\right.
\]

\vspace{.1in}

Lisca applied a slight generalization of Theorem~\ref{thm:g3} (\cite{Lisca}, Theorem 6.2), and computed the value of Gompf's $\Gamma$ invariant of $\partial \nu(C)=-L(p,q)$. For our purposes we restate it with $p=n^2$ and $q=n-1$, and we will handle the even and odd values of $n$ seperately.

For $n$ odd, the lens spaces $L(n^2, n^2 - (n-1))$ each have one spin structure $\mathfrak{t}$, which can be specified by the characteristic sublink $L(\mathfrak{t}) = U_1 \cup U_3 \cup U_5 \cup \ldots \cup U_n$. Also, $L_0 = \emptyset$, (see equation~\ref{eq:gamma})). Consequently, we have:
\begin{eqnarray*}
PD\Gamma_{L(n^2,n-1)}(\xi_{std}, \mathfrak{t}) &=& -PD\Gamma_{L(n^2,n^2 - (n-1))}(\xi_{std}, \mathfrak{t})        \\
&=& - \nu_0 - \nu_1 + \nu_2 - \nu_3 + \ldots - \nu_{n} + \frac{n-1}{2}\nu_{n+1}   \\
&=& \nu_1 - \nu_1 + 2\nu_1 - 3\nu_1 + \dots -n\nu_1 + \frac{n^2 - 1}{2}\nu_1   \\
&\equiv& \frac{n^2 - n}{2}\nu_1  \mod{n^2} \, .
\end{eqnarray*}


For $n$ even, the lens spaces $L(n^2, n^2 - (n-1))$ each have two spin structures $\mathfrak{t}_1$ and $\mathfrak{t}_2$, corresponding to the characteristic sublinks $L(\mathfrak{t}_1) = U_0$ and $L(\mathfrak{t}_2) = U_1 \cup U_3 \cup \ldots \cup U_{n+1}$ respectively. As before, $L_0 = \emptyset$. Consequently we have:
\begin{eqnarray*}
PD\Gamma_{L(n^2,n-1)}(\xi_{std}, \mathfrak{t}_1) &=& -PD\Gamma_{L(n^2,n^2 - (n-1))}(\xi_{std}, \mathfrak{t}_1)       \\
&=& - \nu_0 + \nu_{n+1}   \\
&=& \nu_1 + \frac{n^2 - n - 2}{2}\nu_1   \\
&\equiv& \frac{n^2 - n}{2}\nu_1  \mod{n^2}   \\
PD\Gamma_{L(n^2,n-1)}(\xi_{std}, \mathfrak{t}_2) &=& -PD\Gamma_{L(n^2,n^2 - (n-1))}(\xi_{std}, \mathfrak{t}_2)   \\     
&=& - \nu_0 - \nu_1 + \nu_2 - \nu_3 + \ldots + \nu_{n+1}  \\
&=& \nu_1 - \nu_1 + 2\nu_1 -3\nu_1 + 4\nu_1 - \ldots -(n+1)\nu_1   \\
&\equiv& \frac{n}{2}\nu_1  \mod{n^2} \, .
\end{eqnarray*}


\subsection{Computations of $\Gamma$ for $\partial (B_n, J_n)$}
\label{sec:gammabn}

Having described the Stein structure $J_n$ on $B_n$ in section~\ref{sec:sympbncn}, we are ready to compute the $\Gamma$ invariant of $\partial (B_n,J_n) = (\partial B_n, \xi)$ where $\xi$ is the induced contact structure, $\xi = T\partial B_n \cap JT\partial B_n$. As described in Theorem~\ref{thm:g3}, we construct the manifold $B_n^*$ from $B_n$, where we replace the 1-handle in $B_n$ with a 2-handle attached to an unknot with framing $0$. A diagram for $B_n^*$ is seen in Figure~\ref{f:bnstar}.

\begin{figure}[ht!]
\labellist
\small\hair 2pt
\pinlabel $0$ at 465 240
\pinlabel $-n$ at 235 160
\pinlabel $-n-1$ at -35 240
\pinlabel $K_2^n$ at 120 -10
\pinlabel $K_1^n$ at 360 -10
\endlabellist
\centering
\includegraphics[scale=0.25]{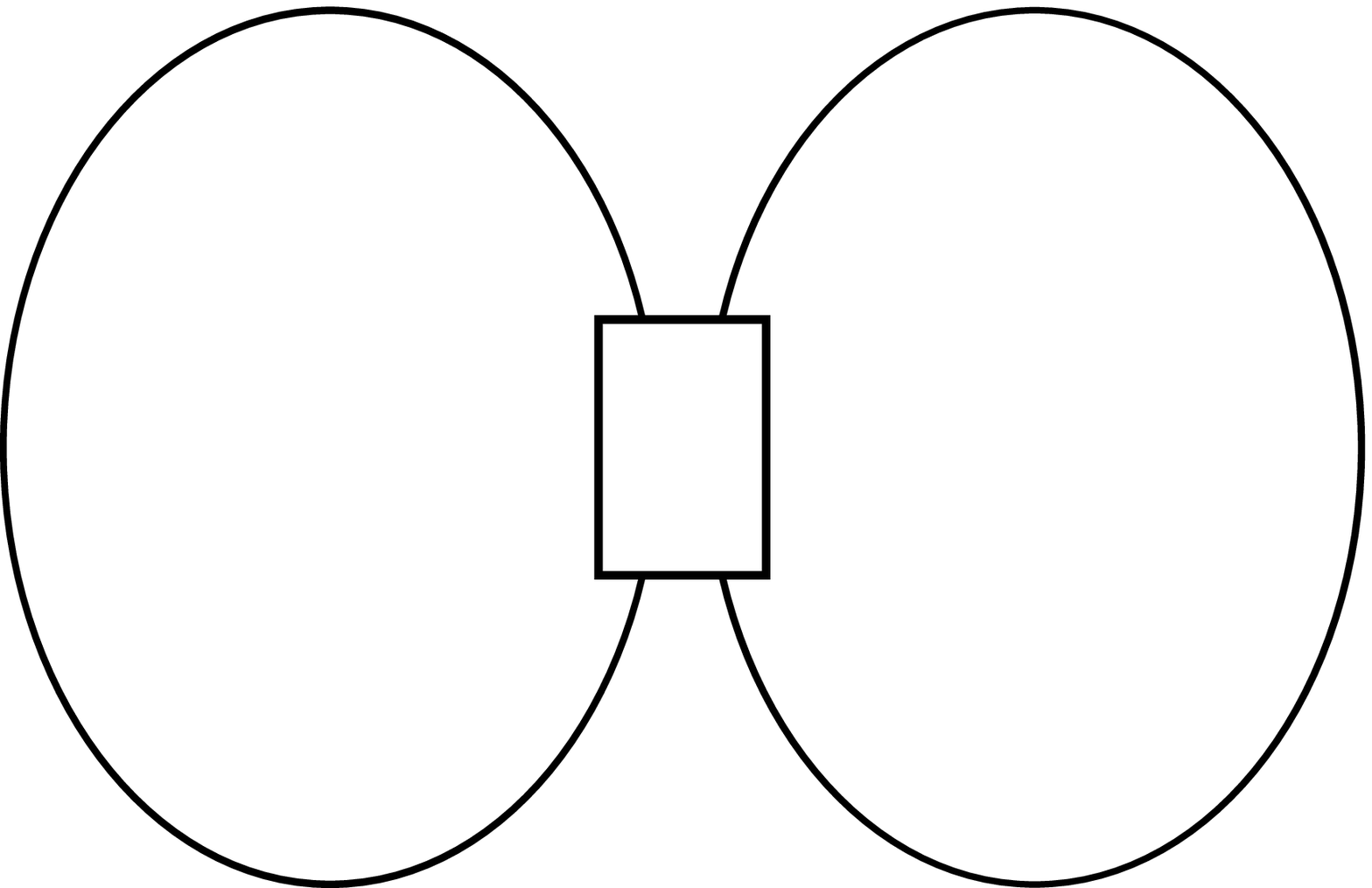}
\caption{{\bf Kirby diagram of $B_n^*$}}
\label{f:bnstar}
\end{figure}

Let $\mu_1$ and $\mu_2$ be the meridians of the knots $K_1^n$ and $K_2^n$, as depicted in Figure~\ref{f:bnstar}. Let  $\alpha_1, \alpha_2$ be the basis of $H_2(B_n^*;\mathbb{Z})$ determined by $K_1^n$ and $K_2^n$. By definition, we have $rot(K_1^n) = 0$, and according to the Stein structure $J_n$, we have $rot(K_1^n) = 1$. The relations of $\mu_1$ and $\mu_2$ in $H_1(\partial B_n;\mathbb{Z})$ are:

\[
\left.\begin{array}{l}
    -n\mu_2 = 0 \\ -n\mu_1 - (n+1)\mu_2 = 0 
\end{array}\right\} 
\implies 
\left.\begin{array}{l}
    \mu_2 = -n\mu_1 \\ (n^2)\mu_1 = 0 \, .
\end{array}\right.
\]

\vspace{.1in}

For $n$ odd, as before, $\partial B_n = L(n^2,n-1)$ has only one spin structure, $\mathfrak{s}$, whose characteristic sublink is $L(\mathfrak{s}) = \emptyset$. Additionally, we have $L_0 = K_1^n$. Letting $\rho$ be as in Theorem~\ref{thm:g3}, we have:
\begin{eqnarray*}
\left\langle \rho, \alpha_1 \right\rangle &=& \frac{1}{2}(rot(K_1^n) + \ell k(K_1^n, K_1^n)) = \frac{1}{2}(0 + 0) = 0   \\
\left\langle \rho, \alpha_2 \right\rangle &=& \frac{1}{2}(rot(K_2^n) + \ell k(K_2^n, K_1^n)) = \frac{1}{2}(1-n) \, .
\end{eqnarray*}


\noindent Using the above, we compute $PD\Gamma_{\partial B_n}(\xi,\mathfrak{s})$:  
\begin{eqnarray*}
PD\Gamma_{\partial B_n}(\xi,\mathfrak{s}) &=& \left\langle \rho, \alpha_1 \right\rangle \mu_1 + \left\langle \rho, \alpha_2 \right\rangle \mu_2  \\
&=& 0\mu_1 + \frac{1-n}{2}\mu_2   \\
&\equiv& \frac{n^2-n}{2} \mu_1 \mod{n^2} \, .
\end{eqnarray*}


For $n$ even, $\partial B_n = L(n^2,n-1)$ has two spin structures $\mathfrak{s}_1$ and $\mathfrak{s}_2$, corresponding to the characteristic sublinks $L(\mathfrak{s}_1) = K_2^n$ and $L(\mathfrak{s}_2) = K_1^n + K_2^n$ respectively, (and $L_0 = K_1^n$ as before). We have for the spin structure $\mathfrak{s}_1$:
\begin{eqnarray*}
\left\langle \rho, \alpha_1 \right\rangle &=& \frac{1}{2}(rot(K_1^n) + \ell k(K_1^n, K_1^n + K_2^n)) = \frac{1}{2}(0-n) = \frac{-n}{2}   \\
\left\langle \rho, \alpha_2 \right\rangle &=& \frac{1}{2}(rot(K_2^n) + \ell k(K_2^n, K_1^n + K_2^n)) = \frac{1}{2}(1-(2n+1)) = -n \, .  
\end{eqnarray*}


\noindent Therefore,
\begin{eqnarray*}
PD\Gamma_{\partial B_n}(\xi,\mathfrak{s}_1) &=& \left\langle \rho, \alpha_1 \right\rangle \mu_1 + \left\langle \rho, \alpha_2 \right\rangle \mu_2   \\
&=& \frac{-n}{2}\mu_1 -n\mu_2   \\
&\equiv& \frac{2n^2-n}{2} \mu_1 \mod{n^2} \, .
\end{eqnarray*}


For the spin structure $\mathfrak{s}_2$ we get:
\begin{eqnarray*}
\left\langle \rho, \alpha_1 \right\rangle &=& \frac{1}{2}(rot(K_1^n) + \ell k(K_1^n, 2K_1^n + K_2^n)) = \frac{1}{2}(0-n) = \frac{-n}{2}    \\
\left\langle \rho, \alpha_2 \right\rangle &=& \frac{1}{2}(rot(K_2^n) + \ell k(K_2^n, 2K_1^n + K_2^n)) = \frac{1}{2}(1-(3n+1)) = \frac{-3n}{2} \, .  
\end{eqnarray*}


\noindent Therefore,
\begin{eqnarray*}
PD\Gamma_{\partial B_n}(\xi,\mathfrak{s}_2) &=& \left\langle \rho, \alpha_1 \right\rangle \mu_1 + \left\langle \rho, \alpha_2 \right\rangle \mu_2   \\
&=& \frac{-n}{2}\mu_1 - \frac{3n}{2}\mu_2   \\
&\equiv& \frac{n^2-n}{2} \mu_1 \mod{n^2} \, .
\end{eqnarray*}


\subsection{Computations of $\Gamma$ for $\partial (C_n, J'_n)$}
\label{sec:gammacn}

Next, we will compute the $\Gamma$ invariant for $\partial (C_n, J'_n)$, where $J'_n$ is a Stein structure on $C_n$, described in section~\ref{sec:sympbncn}. This Stein structure induces $\xi' = T\partial C_n \cap JT\partial C_n$, the contact structure on the boundary $\partial (C_n, J'_n) = (\partial C_n, \xi')$.


Let $\lambda_1, \ldots, \lambda_{n-1}$ be the meridians of the knots $W_1, \ldots, W_{n-1}$, as in Figure~\ref{f:cnstein}. Also, let  $\beta_1, \ldots, \beta_{n-1}$ be the basis of $H_2(C_n;\mathbb{Z})$ determined by $W_1, \ldots, W_{n-1}$. The relations of $\lambda_1, \ldots, \lambda_{n-1}$ in $H_1(\partial C_n;\mathbb{Z})$ are as follows:
 
\[
\left.\begin{array}{l}
    (-n-2)\lambda_1 + \lambda_2 = 0 \\ \lambda_1 -2\lambda_2 + \lambda_3 = 0 \\ \lambda_2 -2\lambda_3 + \lambda_4 = 0 \\ \vdots \\ \lambda_{n-3} -2\lambda_{n-2} +\lambda_{n-1} = 0 \\ \lambda_{n-2} -2\lambda_{n-1} = 0
\end{array}\right\} 
\implies 
\left.\begin{array}{l}
    \lambda_2 = (n+2)\lambda_1 \\ \lambda_3 = (2n+3)\lambda_1 \\ \lambda_4 = (3n+4)\lambda_1 \\ \vdots \\ \lambda_{n-1} = (n^2-n-1)\lambda_1 \\ (n^2)\lambda_1 = 0 \, .
\end{array}\right.
\]


As before, for $n$ odd, $\partial C_n = L(n^2,n-1)$ has only one spin structure, $\mathfrak{r}$, represented by the characteristic sublink $L(\mathfrak{r}) = W_2 + W_4 + W_6 + \cdots + W_{n-1}$, in addition, we have $L_0 = \emptyset$. Again, letting $\rho$ be as in Theorem~\ref{thm:g3}, we have:
\begin{eqnarray*}
\left\langle \rho, \beta_1 \right\rangle &=& \frac{1}{2}(rot(W_1) + \ell k(W_1, W_2 + W_4+ \cdots + W_{n-1})) = \frac{1}{2}(-n + 1) = \frac{1-n}{2}   \\
\left\langle \rho, \beta_2 \right\rangle &=& \frac{1}{2}(rot(W_2) + \ell k(W_2, W_2 + W_4+ \cdots + W_{n-1})) = \frac{1}{2}(0 - 2) = -1   \\
\left\langle \rho, \beta_3 \right\rangle &=& \frac{1}{2}(rot(W_3) + \ell k(W_3, W_2 + W_4+ \cdots + W_{n-1})) = \frac{1}{2}(0 + 2) = 1   \\
&\vdots&  \\
\left\langle \rho, \beta_{n-2} \right\rangle &=& \frac{1}{2}(rot(W_{n-2}) + \ell k(W_{n-2}, W_2 + W_4+ \cdots + W_{n-1})) = \frac{1}{2}(0 + 2) = 1   \\
\left\langle \rho, \beta_{n-1} \right\rangle &=& \frac{1}{2}(rot(W_{n-1}) + \ell k(W_{n-1}, W_2 + W_4+ \cdots + W_{n-1})) = \frac{1}{2}(0 - 2) = -1  \, .  
\end{eqnarray*}


Using the above, we compute $PD\Gamma_{\partial C_n}(\xi',\mathfrak{r})$:  
\begin{eqnarray*}
PD\Gamma_{\partial C_n}(\xi',\mathfrak{r}) &=& \left\langle \rho, \beta_1 \right\rangle \lambda_1 + \cdots + \left\langle \rho, \beta_{n-1} \right\rangle \lambda_{n-1}   \\
&=& \frac{1-n}{2}\lambda_1 - \lambda_2 + \lambda_3 - \cdots + \lambda_{n-2} - \lambda_{n-1}    \\
&=& \frac{1-n}{2}\lambda_1 - (n+2)\lambda_1 + (2n+3)\lambda_1 - \cdots - (n^2-n-1)\lambda_1  \\
&=& \frac{1-n}{2}\lambda_1 - \frac{n^2 +1}{2}\lambda_1   \\
&\equiv& \frac{n^2-n}{2}\lambda_1  \mod{n^2} \, .
\end{eqnarray*}


For $n$ even, $\partial C_n = L(n^2,n-1)$ has two spin structures $\mathfrak{r}_1$ and $\mathfrak{r}_2$, corresponding to the characteristic sublinks $L(\mathfrak{r}_1) = W_1 + W_3 + W_5 + \cdots + W_{n-1}$ and $L(\mathfrak{r}_2) = \emptyset $ respectively, (and $L_0 = \emptyset$ as before). For the spin structure $\mathfrak{r}_1$, we have:
\begin{eqnarray*}
\left\langle \rho, \beta_1 \right\rangle &=& \frac{1}{2}(rot(W_1) + \ell k(W_1, W_1 + W_3+ \cdots + W_{n-1})) = -(n+1)   \\
\left\langle \rho, \beta_2 \right\rangle &=& \frac{1}{2}(rot(W_2) + \ell k(W_2, W_1 + W_3+ \cdots + W_{n-1})) = 1   \\
\left\langle \rho, \beta_3 \right\rangle &=& \frac{1}{2}(rot(W_3) + \ell k(W_3, W_1 + W_3+ \cdots + W_{n-1})) = -1   \\
&\vdots&  \\
\left\langle \rho, \beta_{n-2} \right\rangle &=& \frac{1}{2}(rot(W_{n-2}) + \ell k(W_{n-2}, W_1 + W_3+ \cdots + W_{n-1})) = 1   \\
\left\langle \rho, \beta_{n-1} \right\rangle &=& \frac{1}{2}(rot(W_{n-1}) + \ell k(W_{n-1}, W_1 + W_3+ \cdots + W_{n-1})) = -1  \, . 
\end{eqnarray*}


\noindent Therefore,
\begin{eqnarray*}
PD\Gamma_{\partial C_n}(\xi',\mathfrak{r_1}) &=& \left\langle \rho, \beta_1 \right\rangle \lambda_1 + \cdots + \left\langle \rho, \beta_{n-1} \right\rangle \lambda_{n-1}  \\
&=& -(n+1)\lambda_1 + \lambda_2 - \lambda_3 + \cdots + \lambda_{n-2} - \lambda_{n-1}   \\
&=& -(n+1)\lambda_1 + (n+2)\lambda_1 - (2n+3)\lambda_1 + \cdots - (n^2-n-1)\lambda_1  \\
&\equiv& \frac{n^2-n}{2}\lambda_1  \mod{n^2} \, .
\end{eqnarray*}


For the spin structure $\mathfrak{r}_2$ we get:
\begin{eqnarray*}
\left\langle \rho, \beta_1 \right\rangle &=& \frac{1}{2}(rot(W_1) + \ell k(W_1, \emptyset)) = \frac{-n}{2}    \\
\left\langle \rho, \beta_2 \right\rangle &=& \frac{1}{2}(rot(W_2) + \ell k(W_2, \emptyset)) = 0    \\
&\vdots&  \\
\left\langle \rho, \beta_{n-1} \right\rangle &=& \frac{1}{2}(rot(W_{n-1}) + \ell k(W_{n-1}, \emptyset)) = 0  \, . 
\end{eqnarray*}


\noindent Therefore,
\begin{eqnarray*}
PD\Gamma_{\partial C_n}(\xi',\mathfrak{r_2}) &=& \left\langle \rho, \beta_1 \right\rangle \lambda_1 + \cdots + \left\langle \rho, \beta_{n-1} \right\rangle \lambda_{n-1}   \\
&=& \frac{-n}{2}\lambda_1   \\
&\equiv& \frac{2n^2-n}{2}\lambda_1  \mod{n^2} \, .
\end{eqnarray*}

\subsection{Showing $(\partial B_n, \xi) \cong (L(n^2,n-1), \xi_{std}) \cong (\partial C_n, \xi')$}

Finally, we are ready to prove Propostion~\ref{p:iso3}, since we computed the $\Gamma$ invariant for the manifolds $(\partial B_n, \xi)$, $(L(n^2,n-1), \xi_{std})$, and $(\partial C_n, \xi')$. In order to show these manifolds have the same contact structure, ($\xi \cong \xi_{std} \cong \xi'$), we have to find a suitable identification between these manifolds, in particular between their first homology groups. It is important to note that the contact structures $\xi$ and $\xi'$ are tight \cite{Lisca}, since they were induced from the boundaries of Stein surfaces. Therefore, due to the classification of tight contact structures on lens spaces $L(p,q)$ \cite{Gir} \cite{Ho}, the $\Gamma$ invariant is sufficient to show the isomorphisms between these contact 3-manifolds. This is because the $\Gamma$ invariant shows which $spin^c$ structures are induced by the contact structures $\xi$, $\xi'$, and $\xi_{std}$, since $\Gamma(\zeta,\cdot):Spin(M) \rightarrow H_1(M;\mathbb{Z})$ depends only on the homotopy class $[\zeta]$.      

Figure~\ref{f:kirbybntostd} demonstrates a sequence of Kirby calculus moves from $\partial B_n^*$ to $-\partial \nu(C)$, (compare with Figure~\ref{f:nuc}). For a detailed account of Kirby calculus, see \cite{GS}. (Note, for shorthand we represent most spheres by dots, as in Figure~\ref{f:cn}.) As the moves are performed, we keep track of the $\mu_i \in H_1(\partial B_n^*;\mathbb{Z})$, the meridians of the associated unknots in the diagram. In move $\mathbf{I}$ we perform $n$ blow-ups. In moves $\mathbf{II}$ and $\mathbf{III}$ we perform a handleslide. In moves $\mathbf{IV}_1, \ldots, \mathbf{IV}_{n-3}$ we perform a handleslide in each. Finally, in move $\mathbf{V}$, we blow-down the unknot with framing $(-1)$.

As a result we can form the following identifications between $\mu_1, \mu_2 \in H_1(\partial B_n;\mathbb{Z})$ and $\nu_0, \ldots, \nu_{n+1} \in H_1(L(n^2,n-1);\mathbf{Z})$:
\begin{eqnarray}
\label{eq:munu}
\mu_1 &=& \nu_{n+1} \nonumber \\
n\mu_1 + n\mu_2 &=& \nu_n \nonumber \\
(n-1)\mu_1 + (n-1)\mu_2 &=& \nu_{n-1} \nonumber \\
&\vdots& \nonumber \\
2\mu_1 + 2\mu_2 &=& \nu_2 \nonumber \\
\mu_1 + \mu_2 &=& \nu_1 \, .
\end{eqnarray}

\begin{figure}[ht!]
\labellist
\small\hair 2pt 
\pinlabel $-n-1$ at 80 390
\pinlabel $0$ at 190 390
\pinlabel $-n$ at 130 315
\pinlabel $\mu_2$ at 80 240
\pinlabel $\mu_1$ at 190 240
\pinlabel $\mathbf{I}$ at 280 268 
\pinlabel $n$ at 320 333 
\pinlabel $-1$ at 450 333 
\pinlabel $1$ at 370 385
\pinlabel $1$ at 380 310 
\pinlabel $1$ at 380 280 
\pinlabel $1$ at 370 233 
\pinlabel $\mu_1$ at 320 303
\pinlabel $\mu_2$ at 450 303
\pinlabel $\mu_1+\mu_2$ at 425 385
\pinlabel $\mu_1+\mu_2$ at 425 233
\pinlabel $\mathbf{II}$ at 495 268
\pinlabel $n$ at 515 333 
\pinlabel $-1$ at 645 333 
\pinlabel $1$ at 565 385
\pinlabel $1$ at 575 310 
\pinlabel $1$ at 565 265 
\pinlabel $2$ at 565 233 
\pinlabel $\mu_1$ at 515 303
\pinlabel $\mu_2$ at 645 303
\pinlabel $\mu_1+\mu_2$ at 620 385
\pinlabel $\mu_1+\mu_2$ at 620 233 
\pinlabel $2\mu_1+2\mu_2$ at 630 265 
\pinlabel $\mathbf{III}$ at 15 38
\pinlabel $n$ at 20 110 
\pinlabel $-1$ at 150 110 
\pinlabel $1$ at 70 162
\pinlabel $1$ at 70 70 
\pinlabel $2$ at 70 42 
\pinlabel $2$ at 70 10 
\pinlabel $\mu_1$ at 20 80
\pinlabel $\mu_2$ at 160 95
\pinlabel $\mu_1+\mu_2$ at 125 162
\pinlabel $\mu_1+\mu_2$ at 125 10 
\pinlabel $2\mu_1+2\mu_2$ at 135 40 
\pinlabel $3\mu_1+3\mu_2$ at 135 70 
\pinlabel $\mathbf{IV}_1\ldots\mathbf{IV}_{n-3}$ at 260 85
\pinlabel $n$ at 285 145 
\pinlabel $-1$ at 420 150 
\pinlabel $1$ at 340 147
\pinlabel $2$ at 340 120 
\pinlabel $2$ at 340 55 
\pinlabel $2$ at 340 20 
\pinlabel $\mu_1$ at 285 170
\pinlabel $\mu_2$ at 425 170
\pinlabel $n\mu_1+n\mu_2$ at 355 170
\pinlabel $2\mu_1+2\mu_2$ at 410 53 
\pinlabel $\mu_1+\mu_2$ at 400 20 
\pinlabel $\mathbf{V}$ at 500 90
\pinlabel $n$ at 565 183 
\pinlabel $2$ at 565 147
\pinlabel $2$ at 565 113 
\pinlabel $2$ at 565 40 
\pinlabel $2$ at 565 5 
\pinlabel $\mu_1$ at 598 181
\pinlabel $n\mu_1+n\mu_2$ at 633 147
\pinlabel $2\mu_1+2\mu_2$ at 630 37 
\pinlabel $\mu_1+\mu_2$ at 620 5 
\endlabellist
\centering
\includegraphics[height=80mm, width=125mm]{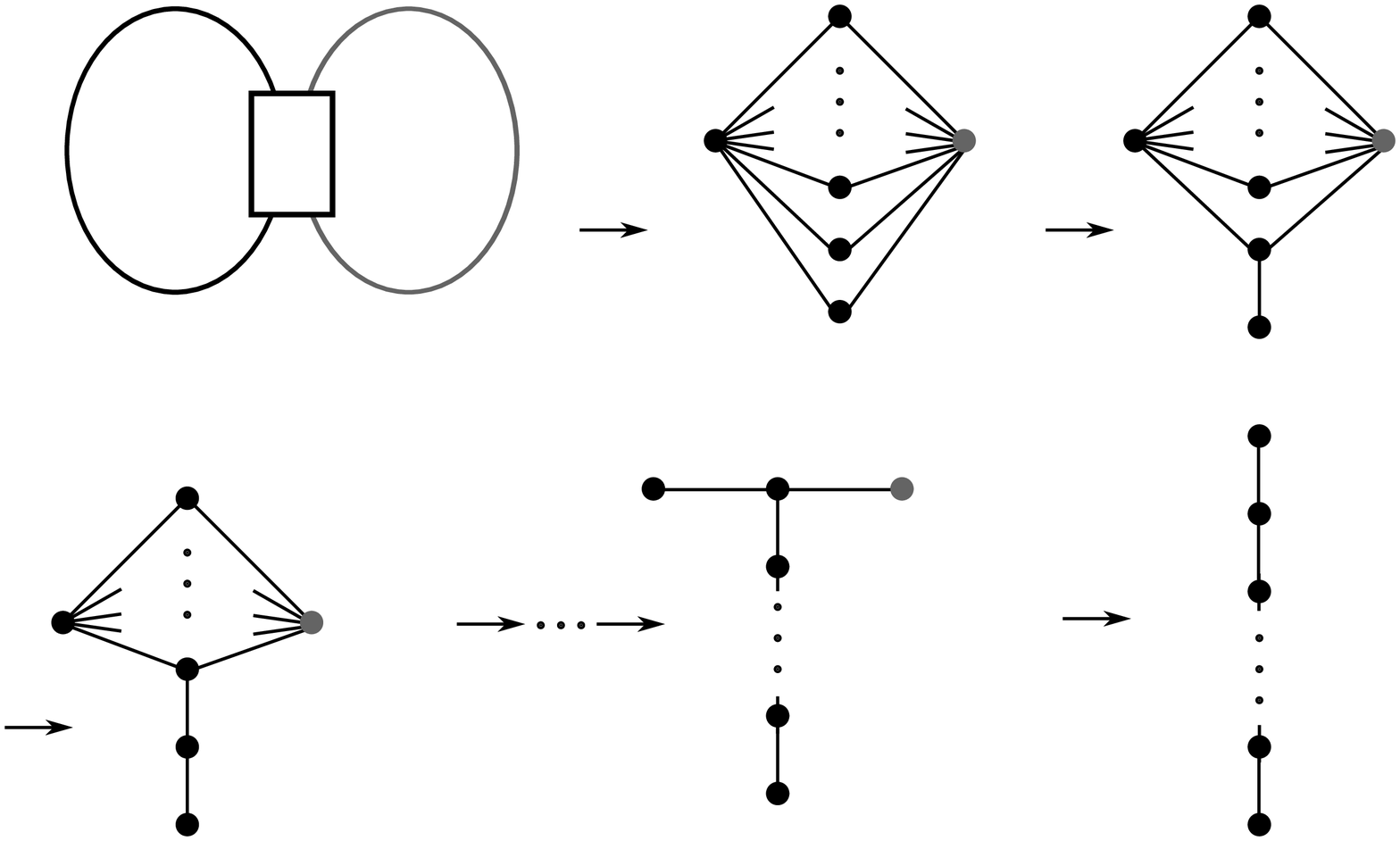}
\caption{{\bf Kirby moves from $\partial B_n^*$ to $-\partial \nu(C)$}}
\label{f:kirbybntostd}
\end{figure}


For $n$ odd, $L(n^2,n-1)$ has only one spin structure, so there is no need to keep track of it throughout the Kirby moves. We multiply both sides of the first identification above by $\frac{n^2-n}{2}$ and get:
\begin{eqnarray*}
\frac{n^2-n}{2}\mu_1 &=& \frac{n^2-n}{2}\nu_{n+1} = (n+1)\frac{n^2-n}{2}\nu_1 = \frac{n^3-n}{2}\nu_1  \\
&\equiv& (\frac{n^3-n}{2} - \frac{(n-1)n^2}{2})\nu_1 \equiv \frac{n^2-n}{2}\nu_1 \mod{n^2} \, . 
\end{eqnarray*}

\noindent Thus, we have:
\begin{equation*}
PD\Gamma_{(\partial B_n)}(\xi,\mathfrak{s}) = \frac{n^2-n}{2}\mu_1 \equiv \frac{n^2-n}{2}\nu_1 = PD\Gamma_{L(n^2,(n-1))}(\xi_{std},\mathfrak{t}) \, .
\end{equation*}


For $n$ even, since $L(n^2,n-1)$ has two spin structures, in addition to matching up the $\mu_i$ to the $\nu_i$, we also also have to make an appropriate identification among the spin structures. In Figure~\ref{f:kirbybntostd} we follow the spin structure $\mathfrak{s}_1$ through the Kirby moves by denoting the knots corresponding to its characteristic sublink in grey color. Thus, we can see that spin structure $\mathfrak{s}_1$ of $\partial B_n$ is identified with the spin structure $\mathfrak{t}_1$ of $-\partial \nu(C)$. If we multiply the first identification of (\ref{eq:munu}) by $\frac{n^2-n}{2}$, we get: 
\begin{eqnarray*}
\frac{n^2-n}{2}\mu_1 &=& \frac{n^2-n}{2}\nu_{n+1} = (n+1)\frac{n^2-n}{2}\nu_1 = \frac{n^3-n}{2}\nu_1  \\
&\equiv& \frac{2n^2-n}{2}\nu_1 \mod{n^2} \, .\nonumber   
\end{eqnarray*}


Likewise, if we take the last identification of (\ref{eq:munu}), and apply the relations for $\mu_i$, we get $(1-n)\mu_1 = \nu_1$. We multiply this by $\frac{n^2-n}{2}$, and get:
\begin{equation*}
\frac{n^2-n}{2}\nu_1 = \frac{n^2-n}{2}(1-n)\mu_1 \equiv \frac{2n^2-n}{2}\mu_1 \mod{n^2} \, . 
\end{equation*}


\noindent As a result, we have:
\begin{equation}
PD\Gamma_{(\partial B_n)}(\xi,\mathfrak{s}_1) = \frac{2n^2-n}{2}\mu_1 \equiv \frac{n^2-n}{2}\nu_1 = PD\Gamma_{L(n^2,(n-1))}(\xi_{std},\mathfrak{t}_1)
\end{equation}

\begin{equation}
PD\Gamma_{(\partial B_n)}(\xi,\mathfrak{s}_2) = \frac{n^2-n}{2}\mu_1 \equiv \frac{2n^2-n}{2}\nu_1 = PD\Gamma_{L(n^2,(n-1))}(\xi_{std},\mathfrak{t}_2) \, .
\end{equation}


\noindent As a consequence, this gives us $(\partial B_n,\xi) \cong (L(n^2,n-1),\xi_{std})$.

In a similar manner, we can show $(\partial B_n, \xi) \cong (\partial C_n, \xi')$. We first find a suitable identification between $\mu_1, \mu_2 \in H_1(\partial B_n;\mathbb{Z})$ and $\lambda_1, \ldots, \lambda_{n-1} \in H_1(\partial C_n;\mathbb{Z})$, by a sequence of Kirby moves depicted in Figure~\ref{f:kirbybntocn}. In move $\mathbf{I}$ we perform a handleslide: we slide $K_1^n$ over $K_2^n$. In moves $\mathbf{II}$ and $\mathbf{III}$ we perform blow-ups. In moves $\mathbf{IV}_1,\ldots,\mathbf{IV}_{n-4}$ we perform a blow-up in each. Finally, in move $\mathbf{V}$ we blow-down the unknot with framing $(1)$. 

In the final diagram we can see a clear identification with Figure~\ref{f:cn2}, which gives us the following:
\begin{eqnarray}
\label{eq:mulambda}
 \mu_1 + \mu_2 &=& \lambda_1 \nonumber \\
2\mu_1 + \mu_2 &=& \lambda_2 \nonumber \\
&\vdots& \nonumber \\
(n-2)\mu_1 + \mu_2 &=& \lambda_{n-2} \nonumber \\
(n-1)\mu_1 + \mu_2 &=& \lambda_{n-1} \, .
\end{eqnarray} 

\begin{figure}[ht!]
\labellist
\small\hair 2pt
\pinlabel $-n-1$ at 70 315
\pinlabel $0$ at 190 315
\pinlabel $-n$ at 130 240
\pinlabel $\mu_2$ at 70 160
\pinlabel $\mu_1$ at 190 160
\pinlabel $\mathbf{I}$ at 280 225 
\pinlabel $-n-1$ at 330 260
\pinlabel $n-1$ at 335 215
\pinlabel $\mu_1+\mu_2$ at 410 260
\pinlabel $\mu_2$ at 390 215
\pinlabel $\mathbf{II}$ at 420 225
\pinlabel $-n-2$ at 465 280
\pinlabel $-1$ at 485 235
\pinlabel $n-2$ at 475 190
\pinlabel $\mu_1+\mu_2$ at 550 280
\pinlabel $2\mu_1+\mu_2$ at 555 235
\pinlabel $\mu_2$ at 530 190
\pinlabel $\mathbf{III}$ at 620 225
\pinlabel $-n-2$ at 635 300 
\pinlabel $-2$ at 655 258
\pinlabel $-1$ at 655 215
\pinlabel $n-3$ at 642 170
\pinlabel $\mu_1+\mu_2$ at 720 300
\pinlabel $2\mu_1+\mu_2$ at 722 257
\pinlabel $3\mu_1+\mu_2$ at 722 212
\pinlabel $\mu_2$ at 695 170
\pinlabel $\mathbf{IV}_1\ldots\mathbf{IV}_{n-4}$ at 85 85
\pinlabel $-n-2$ at 230 115 
\pinlabel $-2$ at 315 115
\pinlabel $-2$ at 390 115
\pinlabel $-1$ at 510 115
\pinlabel $1$ at 590 115
\pinlabel $\mu_1+\mu_2$ at 210 85
\pinlabel $2\mu_1+\mu_2$ at 300 85
\pinlabel $3\mu_1+\mu_2$ at 390 85
\pinlabel $(n-1)\mu_1+\mu_2$ at 510 85
\pinlabel $\mu_2$ at 600 85
\pinlabel $\mathbf{V}$ at 115 10
\pinlabel $-n-2$ at 230 35 
\pinlabel $-2$ at 315 35
\pinlabel $-2$ at 390 35
\pinlabel $-2$ at 510 35
\pinlabel $-2$ at 590 35
\pinlabel $\mu_1+\mu_2$ at 210 5
\pinlabel $2\mu_1+\mu_2$ at 300 5
\pinlabel $3\mu_1+\mu_2$ at 390 5
\pinlabel $(n-1)\mu_1+\mu_2$ at 600 5
\endlabellist
\centering
\includegraphics[height=60mm, width=120mm]{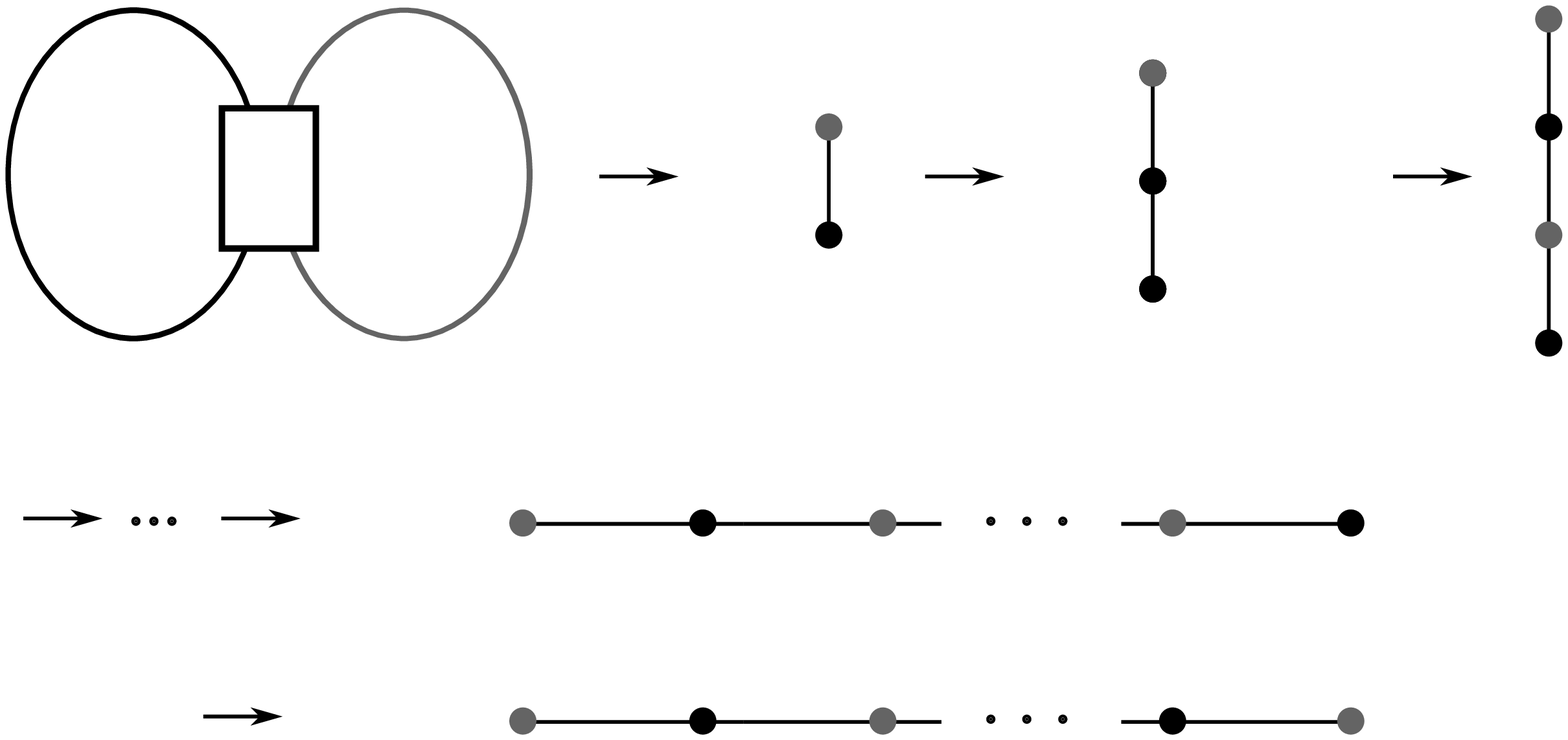}
\caption{{\bf Kirby moves from $\partial B_n$ to $\partial C_n$}}
\label{f:kirbybntocn}
\end{figure}


For $n$ odd, the above identifications imply:
\begin{equation}
\frac{n^2-n}{2}\mu_1 \equiv \frac{n^2-n}{2}\lambda_1 \mod{n^2} \, .
\end{equation}

\noindent Giving us:
\begin{equation}
PD\Gamma_{(\partial B_n)}(\xi,\mathfrak{s}) \equiv PD\Gamma_{\partial C_n}(\xi',\mathfrak{r}) \, .
\end{equation}


For $n$ even, we again match up the spin structures by following the spin structure $\mathfrak{s}_1$ through the Kirby moves in Figure~\ref{f:kirbybntocn}. We represent the spin structure $\mathfrak{s}_1$ by coloring the corresponding unknots in its characteristic sublink with a grey color. Thus, we can see that the spin structure $\mathfrak{s}_1$ of $\partial B_n$ is identified with the spin structure $\mathfrak{r}_1$ of $\partial C_n$. Similar to previous calculations, the relations in (\ref{eq:mulambda}) imply:
\begin{eqnarray}
\frac{2n^2-n}{2}\mu_1 &\equiv& \frac{n^2-n}{2}\lambda_1 \mod{n^2}  \\
\frac{n^2-n}{2}\mu_1 &\equiv& \frac{2n^2-n}{2}\lambda_1 \mod{n^2} \, .
\end{eqnarray}


\noindent Therefore,
\begin{equation}
PD\Gamma_{(\partial B_n)}(\xi,\mathfrak{s}_1) \equiv PD\Gamma_{\partial C_n}(\xi',\mathfrak{r}_1)
\end{equation}
\begin{equation}
PD\Gamma_{(\partial B_n)}(\xi,\mathfrak{s}_1) \equiv PD\Gamma_{\partial C_n}(\xi',\mathfrak{r}_1) \, .
\end{equation} 

\section{Appendices}

\appendix

\section{}
\label{a:appa}
The following sequence of Kirby diagrams show the equivalence of the two different Kirby diagrams for the rational homology balls $B_n$, as seen in Figures~\ref{f:bn} and \ref{f:bnsphm}.  We start off with Figure~\ref{f:App1}, a Kirby diagram of $B_n$ as in Figure~\ref{f:bn}, and illustrate the $n$ positive twists in Figure~\ref{f:App2}. Next we add a cancelling $1/2$-handle pair which includes a $0$-framed two-handle, Figure~\ref{f:App3}. After this, we slide the $(n-1)$-framed handle off of the $0$-framed handle, and obtain Figure~\ref{f:App4}, where the $(n-1)$-framed handle becomes a $(n-3)$-framed handle. We can continue to perform handleslides as seen in Figure~\ref{f:App5} and Figure~\ref{f:App6}, until we have completely slid off the original two-handle from the original one-handle, obtaining Figure~\ref{f:App7}. Finally, we remove a cancelling $1/2$-handle pair, and obtain Figure~\ref{f:App8}, with $n$ negative twists, which corresponds to Kirby diagram Figure~\ref{f:App9} (identical to Figure~\ref{f:bnsphm}).

\begin{figure}[ht]
\begin{minipage}[b]{0.45\linewidth}
\centering
\includegraphics[height=130pt, width=140pt]{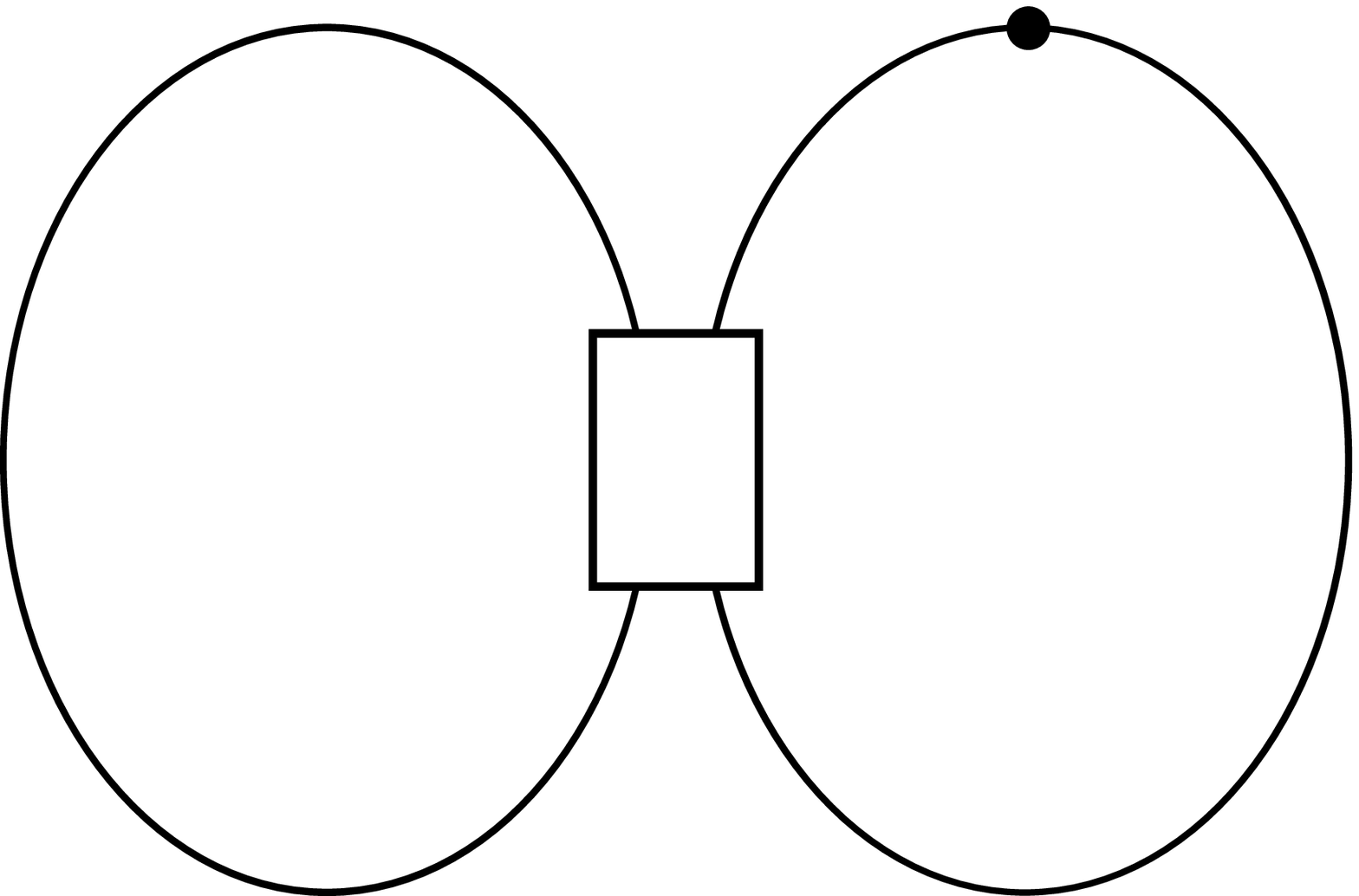}
\caption{ }
\labellist
\small\hair 2pt
\pinlabel $n$ at -195 160
\pinlabel $n-1$ at -260 310
\pinlabel $n$ at 70 150
\pinlabel $twists$ at 70 130
\pinlabel $n-1$ at 70 315
\endlabellist
\label{f:App1}
\end{minipage}
\begin{minipage}[b]{0.45\linewidth}
\centering
\includegraphics[height=130pt, width=140pt]{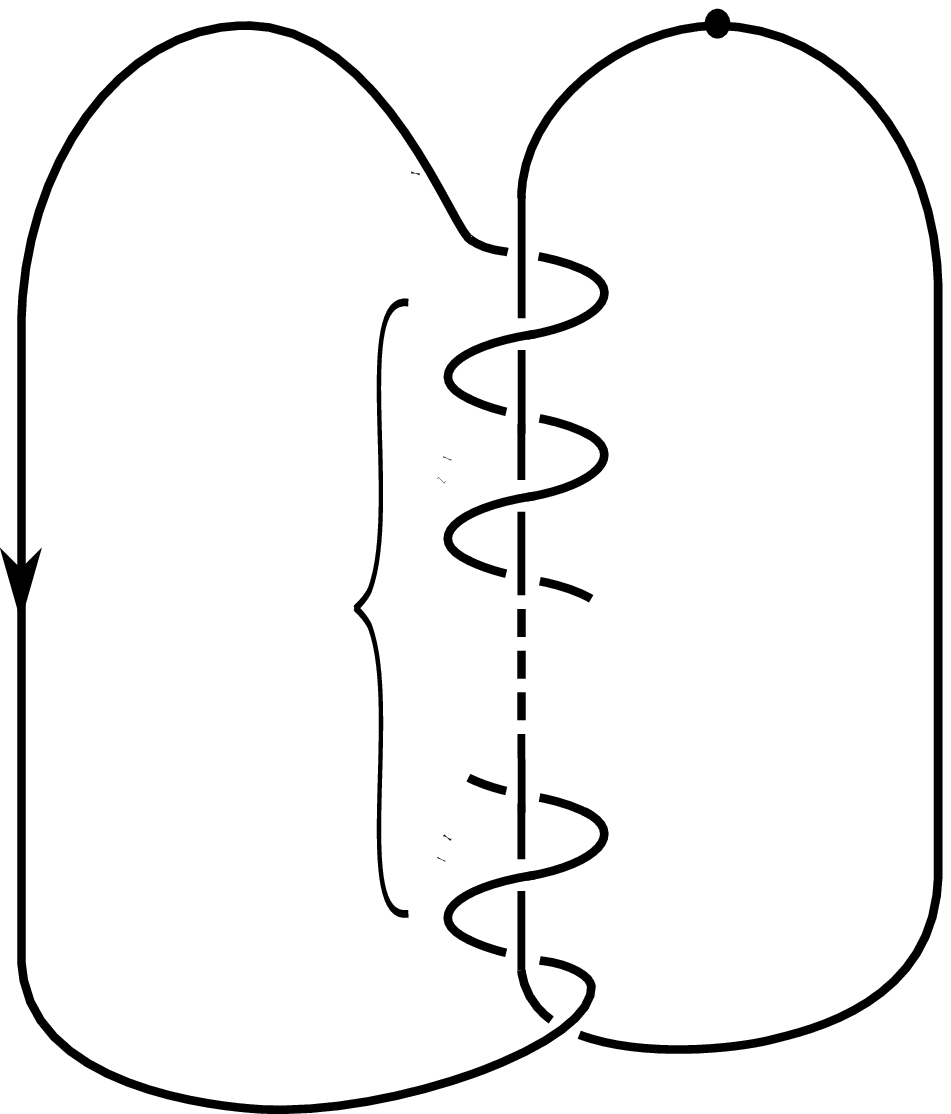}
\caption{ }
\label{f:App2}
\end{minipage}
\end{figure}

\begin{figure}[ht]
\begin{minipage}[b]{0.45\linewidth}
\centering
\includegraphics[height=130pt, width=140pt]{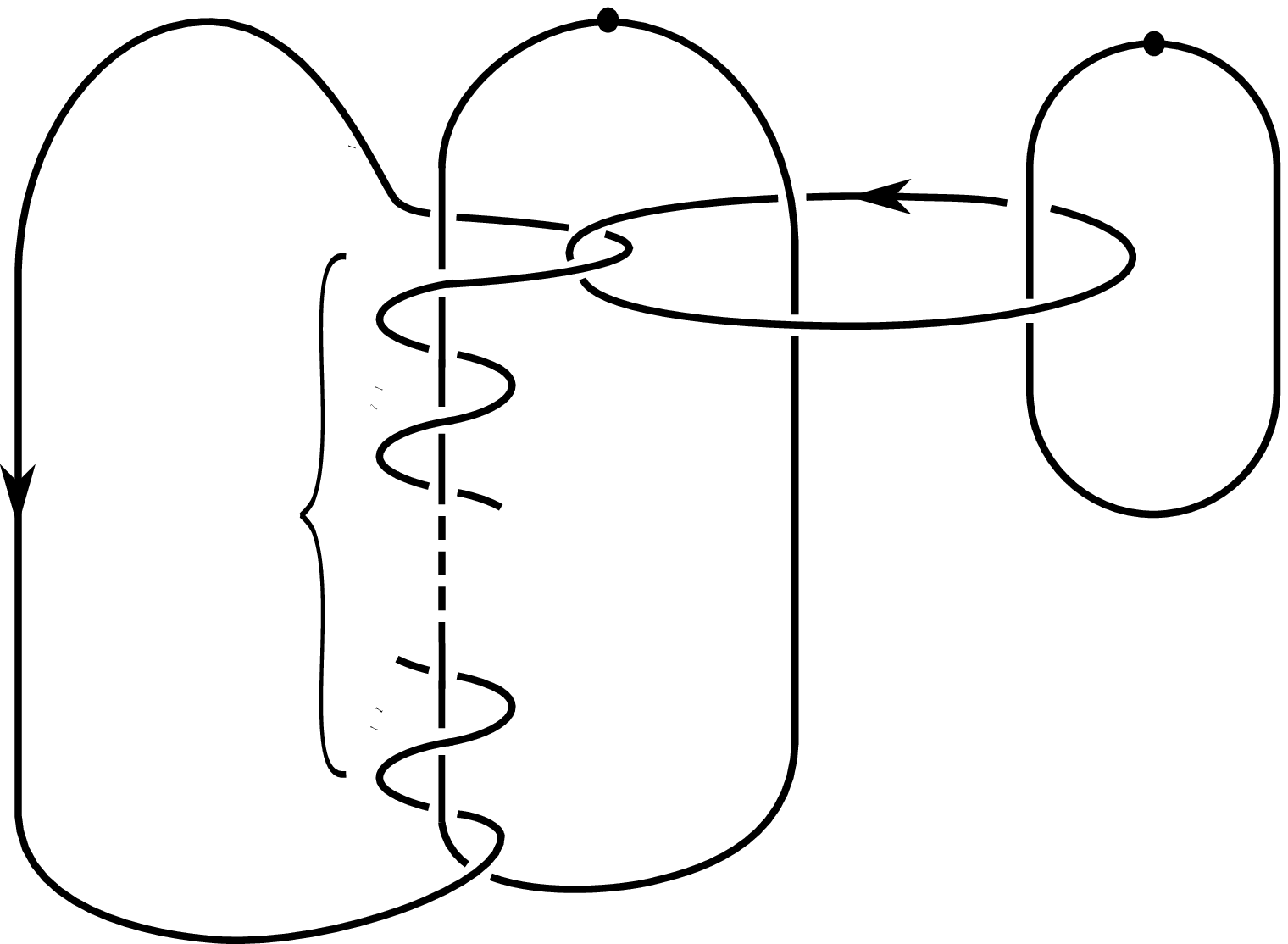}
\caption{ }
\labellist
\small\hair 2pt
\pinlabel $n-1$ at -335 352
\pinlabel $n$ at -345 165
\pinlabel $twists$ at -345 145
\pinlabel $0$ at -160 292
\pinlabel $n-3$ at 95 365
\pinlabel $n-1$ at 10 185
\pinlabel $twists$ at 10 165
\pinlabel $0$ at 200 287
\endlabellist
\label{f:App3}
\end{minipage}
\begin{minipage}[b]{0.45\linewidth}
\centering
\includegraphics[height=130pt, width=140pt]{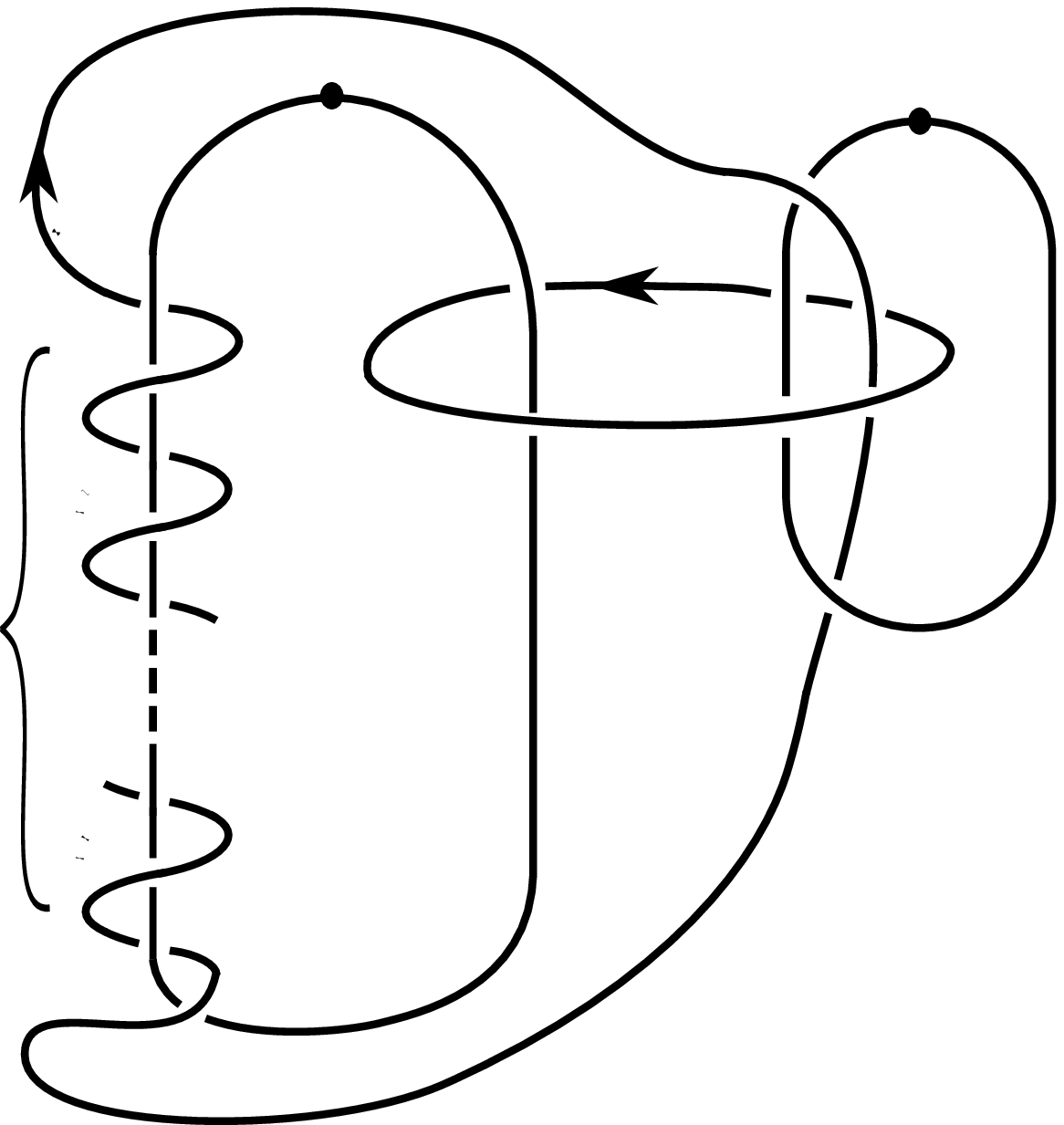}
\caption{ }
\label{f:App4}
\end{minipage}
\end{figure}

\begin{figure}[ht]
\begin{minipage}[b]{0.45\linewidth}
\centering
\includegraphics[height=130pt, width=140pt]{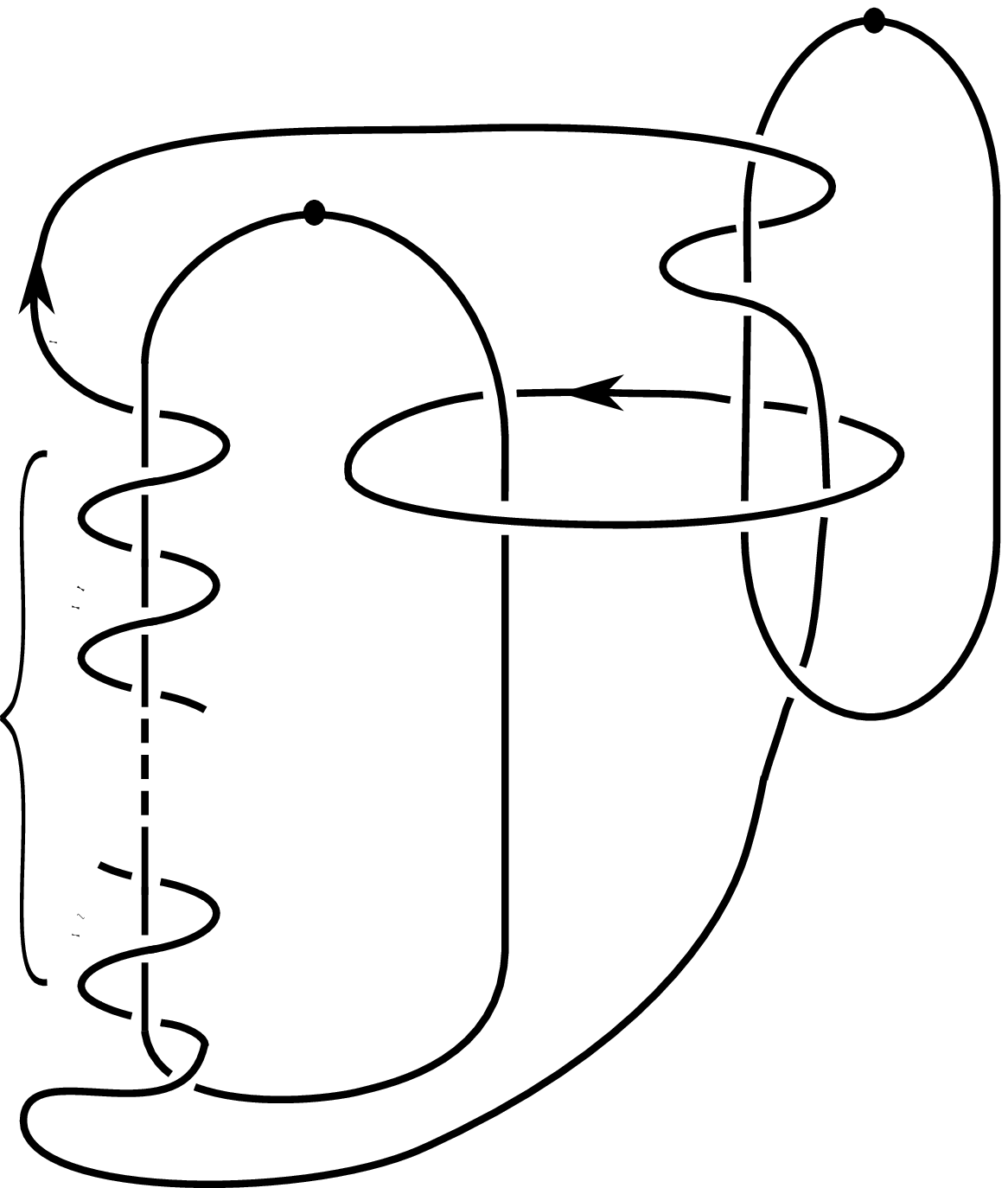}
\caption{ }
\labellist
\small\hair 2pt
\pinlabel $n-5$ at -305 540
\pinlabel $n-2$ at -455 260
\pinlabel $twists$ at -455 225
\pinlabel $0$ at -220 420
\pinlabel $-n+1$ at 75 440
\pinlabel $n-1$ at 160 385
\pinlabel $twists$ at 160 350
\pinlabel $0$ at 190 283
\endlabellist
\label{f:App5}
\end{minipage}
\begin{minipage}[b]{0.45\linewidth}
\centering
\includegraphics[height=130pt, width=140pt]{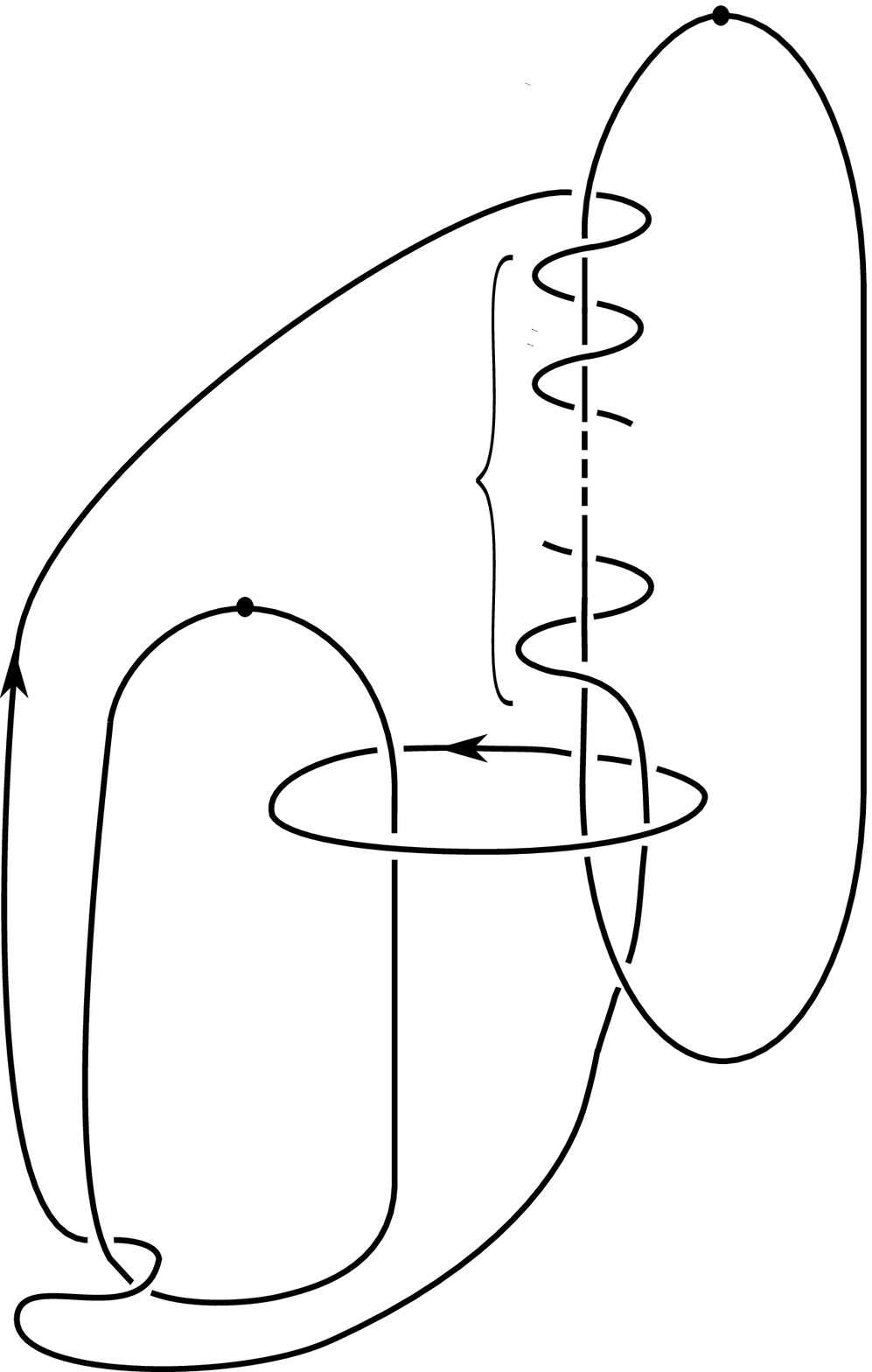}
\caption{ }
\label{f:App6}
\end{minipage}
\end{figure}

\begin{figure}[ht]
\begin{minipage}[b]{0.31\linewidth}
\centering
\includegraphics[scale=0.28]{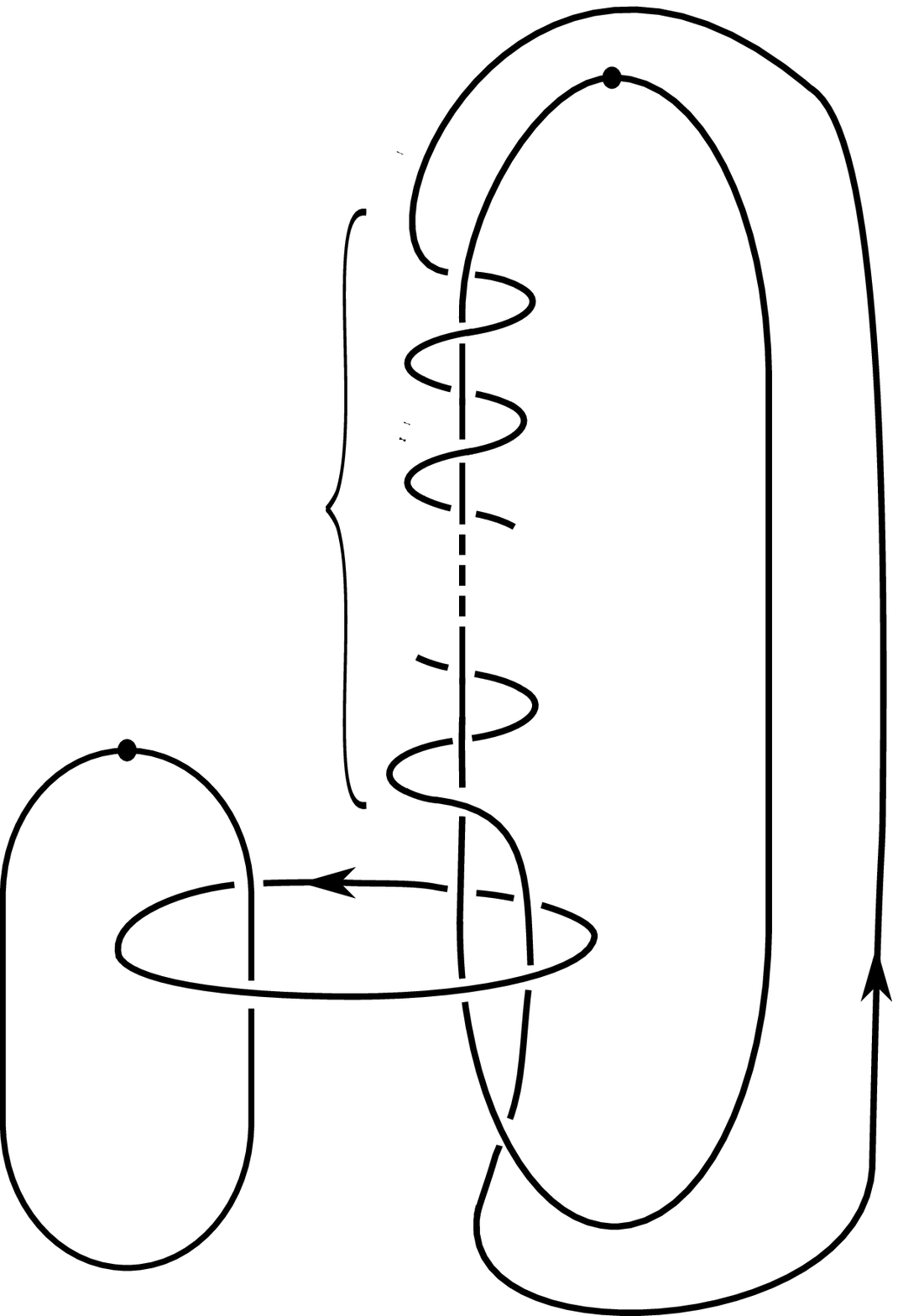}
\caption{ }
\labellist
\small\hair 2pt
\pinlabel $-n-1$ at -250 445
\pinlabel $n-1$ at -340 280
\pinlabel $twists$ at -340 255
\pinlabel $0$ at -260 90
\pinlabel $n$ at 57 170
\pinlabel $twists$ at 57 150
\pinlabel $-n-1$ at 75 317
\pinlabel $-n$ at 510 155
\pinlabel $-n-1$ at 430 295
\endlabellist
\label{f:App7}
\end{minipage}
\begin{minipage}[b]{0.36\linewidth}
\centering
\includegraphics[scale=0.34]{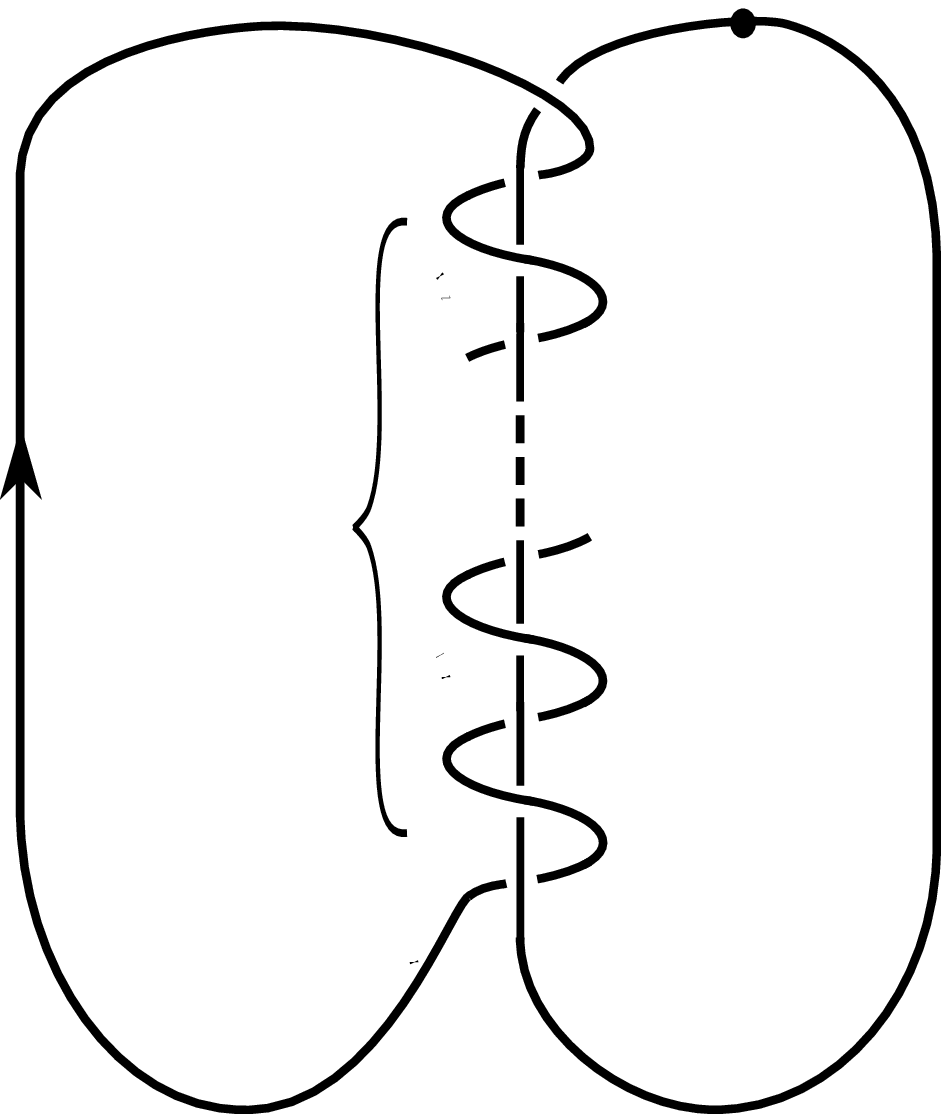}
\caption{ }
\label{f:App8}
\end{minipage}
\begin{minipage}[b]{0.31\linewidth}
\centering
\includegraphics[height=100pt,width=120pt]{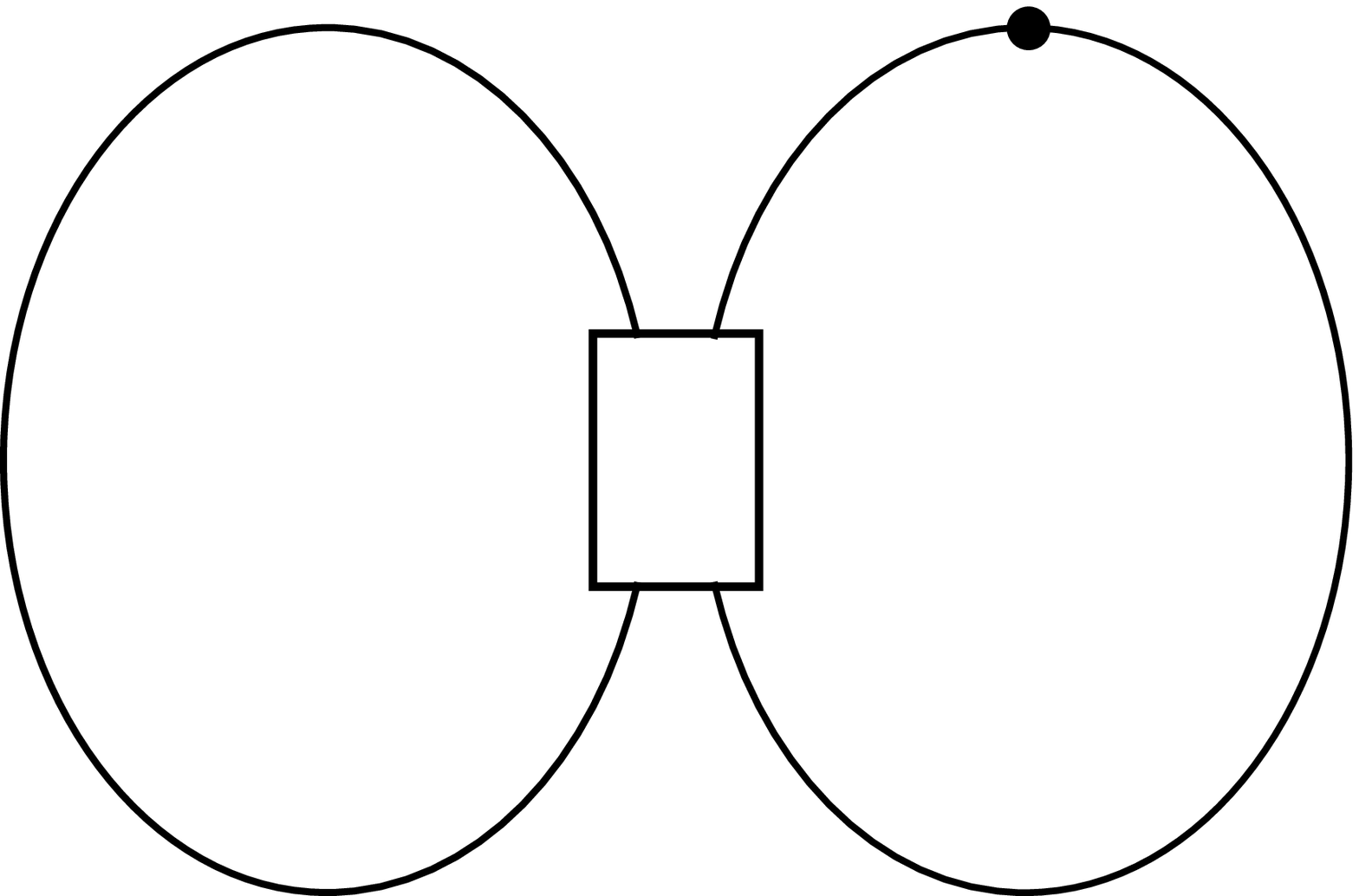}
\caption{ }
\label{f:App9}
\end{minipage}
\end{figure}

\clearpage


\begin{thebibliography}{0}

                                                      
\bibitem[CH]{CaHa}
A. Casson and J. Harer, \emph{Some homology lens spaces which bound rational balls}, Pacific J. Math. \textbf{96} (1981), 23-36

\bibitem[El]{Eliash}
Y. Eliashberg, \emph{Topological characterization of Stein manifolds of dimension $> 2$}, Int. J. of Math. \textbf{1} (1990), 29-46

\bibitem[EM]{EliM}
Y. Eliashberg and N. Mishachev, \emph{Introduction to the h-principle}, Graduate Studies
in Mathematics 48, Amer. Math. Soc., Providence, 2002

\bibitem[Et]{Et}
 J. B. Etnyre. \emph{Legendrian and Transversal Knots}, in Handbook of Knot Theory, Elsevier B. V., Amsterdam, 2005, 105-185

\bibitem[FS]{FSrbd}
R. Fintushel and R. Stern, \emph{Rational blowdowns of smooth 4-manifolds}, J. Diff. Geom. \textbf{46} (1997) 181-235

\bibitem[Ge]{Ge}
H. Geiges. \emph{An Introduction to Contact Topology}, Cambridge University Press, 2008

\bibitem[Gi]{Gir}
E Giroux, \emph{Structures de contact en dimension trois et bifurcations des feuilletages de surfaces}, Invent. Math. \textbf{141} (2000), no. 3, 615-689.

\bibitem[Go]{Gompf1}
R. Gompf, \emph{Handlebody construction of Stein surfaces}, Ann. of Math. \textbf{148} (1998), no. 2, 619-693

\bibitem[GS]{GS}
R. Gompf and A. Stipsicz, \emph{An introduction to 4-manifolds and Kirby calculus}, Graduate Studies in Mathematics 20, (American Mathematical Society, Providence, RI, (1999))

\bibitem[HKK]{HKK}
J. Harer, A. Kas and R. Kirby, \emph{Handlebody decompositions of complex surfaces}, Memoirs AMS \textbf{62} (1986), no. 350

\bibitem[Ho]{Ho}
K. Honda, \emph{On the classification of tight contact structures I: Lens spaces, solid tori, and $T^2 \times I$}, Geom. Topol. \textbf{4} (2000), 309-368

\bibitem[KSB]{KoSB}
J. Koll\'{a}r and N. I. Shepherd-Barron, \emph{Threefolds and deformations of surface singularities}, Invent. Math. \textbf{91} (1988), no. 2, 299-338

\bibitem[LM]{LM} Yanki Lekili and Maksim Maydanskiy, \emph{The symplectic topology of some rational homology balls}, preprint arXiv:1202.5625 (2012)

\bibitem[Li]{Lisca}
P. Lisca, \emph{On symplectic fillings of lens spaces}, Trans. Amer. Math. Soc. \textbf{360} (2008), 765-799

\bibitem[OS]{OzSt}
B. Ozbagci and A. Stipsicz, \emph{Surgery on contact 3-manifolds and Stein surfaces}, Bolyai Soc. Math. Stud., 13, Springer-Verlag, Berlin, 2004

\bibitem[Po]{Pont}
L. Pontrjagin, \emph{A classification of mappings of the three-dimensional complex into the two dimensional sphere}, Matematicheskii Sbornik \textbf{9} (1941), no. 2, 331-363

\bibitem[Sy]{Sym1}
M. Symington, \emph{Symplectic rational blowdowns}, J. Diff. Geom. \textbf{50} (1998), 505-518

\bibitem[Wh]{Wh}
H. Whitney, \emph{Analytic extensions of differentiable functions defined in closed sets}, Trans. Amer. Math. Soc. \textbf{36} (1934), 63-89

\end{thebibliography}
\end{document}